\DeclareSymbolFont{rsfs}{U}{rsfs}{m}{n}
\DeclareSymbolFontAlphabet{\mathrsfs}{rsfs}
\definecolor{darkspringgreen}{rgb}{0.09, 0.45, 0.27}
\definecolor{deepjunglegreen}{rgb}{0.0, 0.29, 0.29}
\newenvironment{NB}{
\color{red}{\bf NB}. \footnotesize
}{}
\newenvironment{NB2}{
\color{blue}{\bf NB2}. \footnotesize
}{}
\crefname{Theorem}{Theorem}{Theorems}
\crefname{section}{\S}{\S\S}
\crefname{Lemma}{Lemma}{Lemmas}
\crefname{Proposition}{Proposition}{Propositions}
\crefname{Corollary}{Corollary}{Corollaries}
\crefname{Definition}{Definition}{Definitions}
\crefname{Remark}{Remark}{Remarks}
\crefname{Remarks}{Remark}{Remarks}
\crefname{Conjecture}{Conjecture}{Conjectures}
\crefname{figure}{Figure}{Figure}
\crefname{appendix}{Appendix}{Appendices}
\crefname{equation}{}{}
\renewcommand{\thesubsection}{\thesection(\@roman\c@subsection)}
\newcounter{number}
\newtheorem{Theorem}[equation]{Theorem}
\newtheorem{Lemma}[equation]{Lemma}
\newtheorem{Proposition}[equation]{Proposition}
\theoremstyle{definition}
\theoremstyle{remark}
\newtheorem{Remark}[equation]{Remark}
\numberwithin{equation}{section}
\newcommand{\defeq}{\overset{\operatorname{\scriptstyle def.}}{=}}
\newcommand{\CC}{{\mathbb C}}
\newcommand{\ZZ}{{\mathbb Z}}
\newcommand{\proj}{{\mathbb P}}
\newcommand{\CP}{\proj}
\newcommand{\SL}{\operatorname{\rm SL}}
\newcommand{\GL}{\operatorname{GL}}
\newcommand{\PGL}{\operatorname{PGL}}
\newcommand{\g}{{\mathfrak g}}
\newcommand{\Spec}{\operatorname{Spec}\nolimits}
\newcommand{\Hom}{\operatorname{Hom}}
\newcommand{\id}{\operatorname{id}}
\renewcommand{\MR}[1]{}
\newcommand{\Wedge}{{\textstyle \bigwedge}}
\newcommand{\bN}{\mathbf N}
\newcommand{\shfO}{\mathcal O}
\newcommand{\tslabar}{\mathbin{
\setbox0=\hbox{/\!\!/\!\!/}\rule[0.4\ht0]{\wd0}{.3\dp0}\kern-\wd0\box0}}
\newcommand{\Gr}{\mathrm{Gr}}
\newcommand{\cR}{\mathcal R}
\newcommand{\cT}{\mathcal T}
\newcommand{\cK}{\mathcal K}
\newcommand{\cO}{\mathcal O}
\newcommand{\cW}{\mathcal W}
\newcommand{\cM}{\mathcal M}
\newcommand{\uS}{\mathscr S}
\newcommand{\uQ}{\mathscr Q}
\newcommand{\cA}[1][{}]{%
  \@ifmtarg{#1}%
  {\mathcal A}
  {\mathcal A(#1)}
}
\newcommand{\cAh}[1][{}]{%
  \@ifmtarg{#1}%
  {\mathcal A_\hbar}
  {\mathcal A_\hbar(#1)}
}
\newcommand{\cE}{\mathcal E}
\newcommand{\bv}{\mathbf v}
\newcommand{\bw}{\mathbf w}
\newcommand{\bz}{\mathbf z}
\newcommand{\sfu}{\mathsf u}
\newcommand{\sfy}{\mathsf y}
\newcommand{\BA}{\mathbb A}
\newcommand{\oZ}{\mathring{Z}}
\newcommand{\equi}{t}
\newcommand{\Equi}{T}
\newcommand{\formal}{t}
\newcommand{\arxiv}[1]{\href{http://arxiv.org/abs/#1}{\tt arXiv:\nolinkurl{#1}}}
\newcommand{\Weyl}{\mathbb W}
\newcommand{\altG}{G_\bullet}
\newcommand{\altN}{\bN_\bullet}
\newcommand{\cP}{\mathcal{P}}
\newcommand{\altGO}{G_{\bullet, \cO}}
\begin{document}

\title[Coulomb branches of quiver gauge theories with symmetrizers]
{Coulomb branches of quiver gauge theories with symmetrizers}

\author[H.~Nakajima]{Hiraku Nakajima}
\address{Kavli Institute for the Physics and Mathematics of the Universe (WPI),
  The University of Tokyo,
  5-1-5 Kashiwanoha, Kashiwa, Chiba, 277-8583,
  Japan
}
\email{hiraku.nakajima@ipmu.jp}

\author[A.~Weekes]{Alex Weekes}
\address{Perimeter Institute for Theoretical Physics,
	31 Caroline St. N., Waterloo, Ontario, N2L 2Y5, 
	Canada
}
\email{alex.weekes@gmail.com}

\subjclass[2000]{}
\begin{abstract}
  We generalize the mathematical definition of Coulomb branches of
  $3$-dimensional $\mathcal N=4$ SUSY quiver gauge theories in
  \cite{2015arXiv150303676N,2016arXiv160103586B,2016arXiv160403625B}
  to the cases with \emph{symmetrizers}. We obtain generalized affine Grassmannian slices of type $BCFG$ as examples of the
  construction, and their deformation quantizations via truncated shifted Yangians.   Finally, we study modules over these quantizations and relate them to
  the lower triangular part of the quantized enveloping algebra of type $ADE$.
  \begin{NB}
    The final sentence is added on May 15.
  \end{NB}%
  \begin{NB2}
  July 4: Changed the final two sentences.  Previous version:
  
  We obtain generalized slice in affine Grassmannian of type BDFG as examples of the constrcution. We also study their quantizations, and relate them to the lower triangular part of the quantized enveloping algebra of type ADE. 
  \end{NB2}
\end{abstract}

\maketitle




\section{Introduction}

Let $I$ be a finite set.
Recall $(c_{ij})_{i,j\in I}$ is a \emph{symmetrizable Cartan matrix}
if \begin{itemize}
\item $c_{ii} = 2$ for all $i\in I$, and $c_{ij}\in\ZZ_{\le 0}$ for all $i\neq j$,
\item there is $(d_i)\in \ZZ_{>0}^I$ such that $d_i c_{ij} = d_j c_{ji}$ for all $i,j$.
\end{itemize}
When $d_i = 1$ for any $i\in I$, a mathematical definition of the
Coulomb branch of a $3d$ $\mathcal N=4$
\begin{NB}
  Added on Dec. 28.
\end{NB}%
quiver gauge theory associated
with two $I$-graded vector spaces $V = \bigoplus V_i$,
$W = \bigoplus W_i$ was given in
\cite{2015arXiv150303676N,2016arXiv160103586B}, and its properties
were studied in \cite{2016arXiv160403625B}.
In this note, we generalize the definition to more general
symmetrizable cases. This new definition is motivated by works of
Geiss, Leclerc and Schr\"oer (\cite{MR3660306} and the subsequent papers \cite{MR3801499,MR3555157,MR3848021,MR3830892,arXiv181209663})
which aim to generalize various results on relations between symmetric
Kac-Moody Lie algebras and quivers to symmetrizable cases. They modify
quiver representations by replacing vector spaces on vertices by free
modules of truncated polynomial rings. They use different variables
for polynomials, which are related to each other according to $d_i$. This
modification allows them to relate quiver representations to
symmetrizable Kac-Moody algebras.  Their work, and ours, is also partly motivated by the theory of modulated graphs \cite{DR80,NT2016}, another approach to quivers in symmetrizable types.
\begin{NB2}
July 4: Added final sentence above, as Nandakumar-Tingley was my original motivation; this could be mentioned elsewhere if that seems preferable.
\end{NB2}

In \cite{2016arXiv160103586B} we assign vector bundles over the formal
disk $D = \operatorname{Spec} \CC[[z]]$. Since we can take different
variables $z_i$ for each vertex $i\in I$, the definition has the same
modification.

\begin{NB}
  Add on Nov.~30:
\end{NB}%

A similar construction was considered in the context of $4d$
$\mathcal N=2$ quiver gauge theories by Kimura and Pestun
\cite{Kimura:2017hez} under the name of \emph{fractional quiver gauge
  theories}.

Let us recall that we defined Coulomb branches of quiver gauge
theories associated with a \emph{symmetrizable} Cartan matrix in a
different way in \cite[\S4]{2016arXiv160403625B}. There, we realize a
symmetrizable Cartan matrix by a \emph{folding} of a graph. This folding gives a finite group action on the Coulomb branch of the quiver gauge theory of the (unfolded) graph.  Then we may define the Coulomb branch
of the symmetrizable theory as the corresponding fixed point subscheme. This
construction recovers the twisted monopole formula by Cremonesi,
Ferlito, Hanany and Mekareeya \cite{Cremonesi:2014xha}, as the Hilbert
series of the coordinate ring. This gives supporting evidence that
the folding construction is a reasonable candidate for a mathematical definition
of the Coulomb branch.

Our new construction also gives the twisted monopole formula.
It is natural to believe that the folding construction and the new one
give isomorphic varieties. However various properties of the Coulomb
branch are obvious in the new construction, while they are not in the
old one.
\begin{NB}
    Added on Mar.~6.
\end{NB}%
For example, the twisted monopole formula requires a proof in the
old construction, while it is obvious in the new
construction. \begin{NB2} I suppose e.g.~normality and being
    symplectic on the smooth locus are also quite non-trivial in the
    old construction? \end{NB2}%
We also do not know how to show the
normality in the old construction, while the proof in
\cite{2016arXiv160103586B} works for the new construction.
\begin{NB}
  The regular locus of the fixed point subscheme contains the fixed
  point subscheme in the regular locus. The symplectic form remains a
  symplectic form on the latter locus. I cannot rule out a possibility
  to have a regular point in the fixed point subscheme contained in
  the singular locus of the original Coulomb branch.
\end{NB}%
Therefore we believe that the new
construction has its own meaning.
In addition, work in progress of de Campos Affonso will identify the new definition with the symmetric bow
varieties introduced in \cite{Henrique} for quiver gauge theories of
non-symmetric affine Lie algebras of classical type. This identification is not clear for the old construction of the Coulomb branch as a fixed point subscheme. 

\begin{NB2}
July 5: Added the following paragraph
\end{NB2}

In fact, we will give a second potential definition for the Coulomb branch of a quiver gauge theory with symmetrizers in \cref{section: second definition}.  In many cases both definitions agree, and in particular this is true in finite BCFG types.  However, in general type they are different.  This alternative definition applies to more general data than quivers with symmetrizers, which may be of independent interest.

\begin{NB}
  The following two paragraphs are added on May 15.
\end{NB}%

As a generalization of one of the main results in
\cite{2016arXiv160403625B}, we show that our Coulomb branches are
generalized slices in the affine Grassmannian when the Cartan matrix
is of type $BCFG$ (\cref{thm:slice}). Therefore the geometric Satake
correspondence, as modified in \cite{2017arXiv170900391K}, says that
the direct sum of hyperbolic stalks of the intersection cohomology
complexes of our Coulomb branches has a structure of a finite
dimensional irreducible representation of the Langlands dual Lie
algebra. We expect that the same should be true for arbitrary
symmetrizable Kac-Moody Lie algebras as a symmetrizable generalization
of the conjecture in \cite[\S3(viii)]{2016arXiv160403625B}. (See also
\cite{2018arXiv181004293N} for a refinement of the conjecture.)

Also as a generalization of the main result in seven authors'
(BFK${}^2$NW${}^2$) appendices of \cite{2016arXiv160403625B} and also
of \cite{Weekes}, we show that the quantization of the Coulomb branch
is a truncated shifted Yangian when the Cartan matrix is of type
$BCFG$. (See \cref{thm:quant}.) Its modules can be analyzed by using
techniques of the localization theorem in equivariant homology groups,
even though we use infinite dimensional varieties
\cite{MR3013034,2019arXiv190405415W,modules}\begin{NB2} July 4: Added citation of Webster's paper \end{NB2}. We study the fixed point subvariety with
respect to a $\CC^\times$-action in an infinite dimensional variety
used in the definition of the Coulomb branch in \cref{sec:fixed}. It
turns out that the fixed point subvariety is the same as one appears
in the Coulomb branches of type $ADE$, which is a disjoint union of
varieties appearing Lusztig's work on canonical bases of
$\mathbf U_q^-$ of type $ADE$ \cite{Lu-can2}.
This implies that a certain category of modules of the truncated shifted
Yangian of type $BCFG$ \emph{categorifies} $\mathbf U_q^-$ of type
$ADE$. (See \cref{thm:ADE}. We only explain a parametri\-zation of
simple modules for simplicity.) It is interesting to understand the
relation between this analysis and the geometric Satake correspondence
explained above, as we obtain different Lie algebras, type $ADE$ and
$BCFG$.

\begin{NB}
    The following two paragraphs are added on Mar.~6.
\end{NB}%

Let us also remark that our construction can be applied to more
general situations than considered here. For example, the first-named
author originally introduced the Coulomb branch via cohomology with
compact support of the moduli space of twisted maps from $\proj^1$ to
the Higgs branch $\mathcal M_H$ (viewed as a quotient stack) with
coefficients in the sheaf of vanishing cycles
\cite{2015arXiv150303676N}. This definition can be generalized to our
setting, just changing the domain $\proj^1$ for each vertex $i\in I$.
This view point might shed a new light on the Higgs branch
$\mathcal M_H$ corresponding to our new construction: we cannot make sense of $\mathcal M_H$, but the space
of maps to $\mathcal M_H$ \emph{does} make sense. In particular,
enumerative problems for $\mathcal M_H$, such as discussed in
\cite{MR3752463}, are meaningful.

We also hope that our view point is useful to make advance in the
program of Geiss, Leclerc, Schr\"oer. We may hope to use the above
space of maps to $\mathcal M_H$ to realize representations of the Lie
algebra, or its cousins the Yangian and the quantum loop algebra associated with
the symmetrizable Cartan matrix $(c_{ij})$.

The paper is organized as follows. In \cref{sec:definition} we give
the definition of Coulomb branches for symmetrizable Cartan matrix
$(c_{ij})$.  Since it is a modification of the original one in
\cite{2016arXiv160103586B}, we only explain where we change the definition.
In \cref{sec:example} we determine Coulomb branches in some cases when the Cartan
matrix is $2\times 2$, and there are no framed vector spaces
$W_i$.
In \cref{sec:slice} we show that Coulomb branches are generalized
slices in the affine Grassmannian when the Cartan matrix is of type
$BCFG$. The proof is the same as in \cite[\S3]{2016arXiv160403625B},
once examples in \cref{sec:example} are determined.
In \cref{sec:quant} we discuss quantized Coulomb branches. We show
that they are isomorphic to truncated shifted Yangians in type $BCFG$.
In \cref{sec:app} we give an explicit presentation of the coordinate
ring of the zastava space of degree $\alpha_1+\alpha_2$ of type $G_2$.
This is used in \cref{sec:example}.
Contents in \cref{sec:fixed} are already explained above.  
In \cref{section: second definition} we present a second possible definition for the Coulomb branch associated to a quiver with symmetrizers, as mentioned above.  

\begin{NB}
  Added on May 15.
\end{NB}

\begin{NB2}
July 4: Reference to \cref{section: second definition} added.
\end{NB2}
\begin{NB2}
July 5: Changed final sentence, and added explanatory paragraph above instead
\end{NB2}

\begin{NB2}
July 12: Changed reference to \cref{sec:example} to say ``some cases'''
\end{NB2}

\section{Definition}\label{sec:definition}

\subsection{A valued quiver}
\label{subsection: A valued quiver}

Let $(c_{ij})_{i,j\in I}$ be a symmetrizable Cartan matrix.  We assign a \emph{valued} graph where it has vertices $i\in I$ and
unoriented edges between $i$, $j$ for $c_{ij} < 0$ with values
$(|c_{ij}|, |c_{ji}|)$. A \emph{valued} quiver is a valued graph
together with a choice of an orientation of each edge. Following
\cite{MR3660306}, we set
\(
   g_{ij} = \gcd(|c_{ij}|, |c_{ji}|),
\)
\(
   f_{ij} = |c_{ij}|/g_{ij}
\)
when $c_{ij} < 0$. Note that these are independent of $d_i$.

We take the formal disk $D_i = \operatorname{Spec} \CC[[z_i]]$ for
each vertex $i\in I$. For a pair $(i,j)$ with $c_{ij} < 0$ we take the
formal disk $D = \operatorname{Spec} \CC[[z]]$ and consider its branched coverings $\pi_{ji}\colon D_i\to D$,
$\pi_{ij} \colon D_j \to D$ defined by the maps $\pi_{ji}^\ast(z) = z_i^{f_{ij}}$,
$\pi_{ij}^\ast(z) = z_j^{f_{ji}}$ of coordinate rings.
\begin{NB}
  Corrected from $z_i^{f_{ji}}$, $z_j^{f_{ij}}$.
\end{NB}%
The disk $D$ depends on $(i,j)$, but
we drop $i$, $j$ from the notation. Let $D_i^*$, $D_j^*$, $D^*$ denote
the punctured formal disk for $D_i$, $D_j$, $D$ respectively.

\begin{Remark}
    In \cite{MR3660306} the relation (H2)
    $\varepsilon_i^{f_{ji}} \alpha_{ij}^{(g)} = \alpha_{ij}^{(g)}
    \varepsilon_j^{f_{ij}}$
    is imposed, where $\varepsilon_i$, $\varepsilon_j$ are edge loops
    at $i$ and $j$ respectively, and $\alpha_{ij}^{(g)}$ is the $g$-th
    arrow from $j$ to $i$. It means that we have
    $z_i^{f_{ji}} = z = z_j^{f_{ij}}$. Thus it differs from our
    convention by $f_{ij}\leftrightarrow f_{ji}$. This is probably
    compatible with geometric Satake correspondence: We will obtain
    generalized
    \begin{NB}
        Added on Mar.~6.
    \end{NB}%
    slices in the affine Grassmannian for $G$ for $(c_{ij})$ below,
    and hence representations of $G^\vee$, by the work of
    Krylov~\cite{2017arXiv170900391K}.
    \begin{NB}
        Added on Mar.~6.
    \end{NB}%
    On the other hand, the space of constructible functions on modules
    over the quiver with the relation (H2) is the enveloping algebra
    of the upper triangular subalgebra $\mathfrak n$ of the Lie
    algebra $\mathfrak g$ for $(c_{ij})$. Since we hope to compare
    representations of the same Lie algebra in Coulomb branches and
    \cite{MR3660306}, we need to take Langlands dual relation of (H2).

    Note also that the relation imposed in \cite{MR3500832} for a
    cluster algebra related to the quantum loop algebra
    $\mathbf U_q(\mathbf L{\mathfrak g})$ is the same as ours. See
    \cite[\S1.7.1]{MR3660306}. We believe that this is compatible with
    our results in \cref{sec:quant}, as the $K$-theoretic version of
    our construction in \cref{sec:quant} should yield
    $\mathbf U_q(\mathbf L{\mathfrak g})$-modules. However, the $K$-theoretic version of our construction
    does not immediately give a new approach to the results of
    \cite{MR3500832}. Modules obtained in this way are infinite
    dimensional, while \cite{MR3500832} discussed Kirillov-Reshetikhin
    modules, which are finite dimensional. Nevertheless we expect that
    it gives a first step towards in that direction.
\end{Remark}

\subsection{A moduli space}
\label{subsection: A moduli space}

Fix a valued quiver for $(c_{ij})_{i,j\in I}$.  Let $V=\bigoplus V_i$, $W=\bigoplus W_i$ be finite dimensional
$I$-graded complex vector spaces. Let $\bv_i = \dim V_i$,
$\bw_i = \dim W_i$.
We consider the moduli space $\cR$ parametrizing the following objects:
\begin{itemize}
\item a rank $\bv_i$ vector bundle $\cE_i$ over $D_i$ together with a
  trivialization $\varphi_i\colon \cE_i|_{D_i^*} \to
  V_i\otimes_\CC \shfO_{D_i^*}$ for $i\in I$,
\item a homomorphism $s_i\colon W_i\otimes_\CC\shfO_{D_i}\to \cE_i$
  such that $\varphi_i\circ (s_i|_{D_i^*})$ extends to $D_i$ for
  $i\in I$,
\item a homomorphism
  $s_{ij}\in \CC^{g_{ij}}\otimes_\CC
  \Hom_{\shfO_D}(\pi_{ij*}\cE_j,\pi_{ji*}\cE_i)$ such that
  $(\pi_{ij*}\varphi_i) \circ (s_{ij}|_{D^*}) \circ
  (\pi_{ji*}\varphi_j)^{-1}$ extends to $D$, where $c_{ij} < 0$ and
  there is an arrow $j\to i$ in the quiver.
\end{itemize}

The moduli space of pairs $(\cE_i,\varphi_i)$ as above is the affine
Grassmannian $\Gr_{\GL(V_i)}$ for $\GL(V_i)$.

Dropping the extension conditions in the second and third, we have a
larger moduli space $\cT$, which is an infinite rank vector bundle
over $\prod_i \Gr_{\GL(V_i)}$. Then $\cR$ is a closed subvariety in $\cT$.

When $c_{ij} = c_{ji}$, $\cR$ is nothing but the variety of triples
introduced in \cite[\S2(i)]{2016arXiv160103586B}.

Let $G = \prod_i \GL(V_i)$, $G_\cO = \prod_i \GL(V_i)[[z_i]]$,
$\Gr_G = \prod_i \Gr_{\GL(V_i)}$. 
\begin{NB}
  The last notation added on Dec.~28. Strictly speaking, we should
  emphasize that $\Gr_{\GL(V_i)}$ is defined for $z_i$. But it is too
  cumbersome if we put the dependence into the notation or explanation.
\end{NB}%
We have a $G_\cO$-action on $\cR$ by change of trivializations
$\varphi_i$, and we consider the $G_\cO$-equivariant Borel-Moore homology
group $H^{G_\cO}_*(\cR)$ with complex coefficients. This is defined
rigorously as a double limit as in
\cite[\S2(ii)]{2016arXiv160103586B}.

The spaces $\bN_\cO$, $\bN_\cK$ appear during the construction of the
convolution product in \cite[\S3(i)]{2016arXiv160103586B}. They were
the space of sections (resp.\ rational sections) of the vector bundle
associated with the \emph{trivial} $G$-bundle. In our setting,
$\bN_\cO$ is defined as the direct sum of
$\Hom_{\shfO_{D_i}}(W_i\otimes_\CC\shfO_{D_i},
V_i\otimes_\CC\shfO_{D_i})$ and
$\CC^{g_{ij}}\otimes_\CC
\Hom_{\shfO_D}(\pi_{ij*}(V_j\otimes_\CC\shfO_{D_j}),\pi_{ji*}(V_i\otimes_\CC\shfO_{D_i}))$. For
$\bN_\cK$, we take homomorphisms over $\shfO_{D_i^*}$ and
$\shfO_{D^*}$. We have maps $\Pi\colon \cR\to \bN_{\cO}$ and
$\cT\to\bN_{\cK}$. 
\begin{NB}
  The above paragraph is added on May 10.
\end{NB}%

\subsection{Twisted monopole formula}
\label{subsection: twisted monopole formula}

\begin{NB}
  The following two paragraphs are added on Dec.~28.
\end{NB}%
Recall that the monopole formula for the Hilbert series of the Coulomb
branch of a gauge theory \cite{Cremonesi:2013lqa} is interpreted as
the Poincar\'e polynomial of $H^{G_\cO}_*(\cR)$ with a suitable
modification in the ordinary untwisted case
\cite{2015arXiv150303676N,2016arXiv160103586B}.
The twisted monopole formula is given in \cite{Cremonesi:2014xha} to
cover Coulomb branches of quiver gauge theories for certain symmetrizable
Cartan matrices. It is of the same form
\(
   \sum_\lambda t^{2\Delta(\lambda)} P_G(t;\lambda)
\)
as the untwisted monopole formula, where the summation runs over the
set of dominant coweights $\lambda$ of the gauge group $G$, and
$P_G(t;\lambda)$ is the Poincar\'e polynomial of the equivariant
cohomology ring $H_{\mathrm{Stab}_G(\lambda)}^*(\mathrm{pt})$.
Only $\Delta(\lambda)$ is changed from the untwisted monopole
formula: if $i$, $j\in I$, the ordinary $2\Delta(\lambda)$ contains
contribution
\(
  |\lambda_i^a - \lambda_j^b|,
\)
where $(\lambda_i^a)_{a=1,\dots,\bv_i}$,
$(\lambda_j^b)_{b=1,\dots,\bv_j}$ are components of $\lambda$ for
vertices $i$, $j$ respectively.
In the twisted monopole formula, this contribution is simply replaced by
\(
  |f_{ji} \lambda_i^a - f_{ij} \lambda_j^b|.
\)

Let us check that our new $\cR$ gives the twisted monopole formula as
the Poincar\'e polynomial.  The argument is a simple modification of \cite[\S2(iii)]{2016arXiv160103586B}.  We do so under an additional assumption:
\begin{equation}
\label{assumption on Cartan}
\text{For all } i,j\in I, \text{ if } c_{ij} <0 \text{ then } f_{ij}=1 \text{ or } f_{ji} = 1
\end{equation}
In particular all finite types satisfy this assumption.
\begin{NB2} July 4: Added assumption as an equation, so that it can be referred to in Appendix C \end{NB2}

Let $\Gr_G^\lambda$ denote the $G_\cO$-orbit in $\Gr_G$ corresponding
to a dominant coweight $\lambda$ of $G$. Let $\cR_\lambda$,
$\cT_\lambda$ denote the inverse image of $\Gr_G^\lambda$ under the
projection $\pi\colon\cR\to\Gr_G$, $\pi\colon\cT\to\Gr_G$
respectively.
As in \cite[Lemma~2.2]{2016arXiv160103586B}, $\cT_\lambda/\cR_\lambda$
is a vector bundle over $\Gr_G^\lambda$. The fiber of $\cT_\lambda$ at $\lambda$ is
\begin{equation*}
  \bigoplus_{j\to i} \CC^{g_{ij}}\otimes z_i^{\lambda_i} z_j^{-\lambda_j}
  \Hom_{\CC[[z]]}(V_j\otimes\CC[[z_j]],V_i\otimes\CC[[z_i]]),
\end{equation*}
while the fiber of $\cR_\lambda$ is its intersection with
$\bigoplus_{j\to i} \CC^{g_{ij}}\otimes
  \Hom_{\CC[[z]]}(V_j\otimes\CC[[z_j]],V_i\otimes\CC[[z_i]])$.
Here $z = z_i^{f_{ij}} = z_j^{f_{ji}}$.
\begin{NB}
  Corrected from $z = z_i^{f_{ji}} = z_j^{f_{ij}}$.
\end{NB}%
Therefore the rank of $\cT_\lambda/\cR_\lambda$ is  
\begin{equation*}
  d_\lambda:=\sum_{i\to j} g_{ij}
  \sum_{a=1}^{\bv_i}\sum_{b=1}^{\bv_j}
  \max( f_{ij} \lambda_j^b - f_{ji} \lambda_i^a, 0).
\end{equation*}
\begin{NB2}
June 26: Is this formula correct when $f_{ij}, f_{ji} > 1$?  
\end{NB2}
Following the notation of \cite[Section 2(iii)]{2016arXiv160103586B}, we may formally write the Poincar\'e polynomial of $\cR$.  Let $\cR_{\leq \mu}$ denote the inverse image of the closure $\overline{\Gr_G^\mu} = \bigsqcup_{\lambda\leq \mu} \Gr_G^\lambda$ in $\cR$.  As in \cite[Proposition 2.7]{2016arXiv160103586B}:
\begin{Proposition}
\label{prop: eq: poincare polynomial}
The Poincar\'e polynomial for $\cR_{\leq \mu}$ is given by
$$
P_t^{G_\cO} (\cR_{\leq \mu}) = \sum_{\lambda \leq \mu} t^{2 d_\lambda - 4 \langle \rho, \lambda\rangle} P_G(t; \lambda)
$$
where the sum is over dominant coweights $\lambda$ with $\lambda \leq \mu$.
\end{Proposition}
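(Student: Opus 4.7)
The argument is a direct adaptation of \cite[Prop.~2.7]{2016arXiv160103586B}. I would stratify $\cR_{\leq\mu}$ by the locally closed subvarieties $\cR_\lambda := \pi^{-1}(\Gr_G^\lambda)$ for $\lambda \leq \mu$, and analyze the associated long exact sequences in $G_\cO$-equivariant Borel--Moore homology. The proposition reduces to (i) determining the Poincar\'e polynomial of each stratum, and (ii) showing that the long exact sequences split so that the total polynomial is the sum of stratum contributions.

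For (i), recall that $\cR_\lambda \hookrightarrow \cT_\lambda$ is a sub-pro-vector-bundle of finite corank $d_\lambda$ over $\Gr_G^\lambda$, where the corank formula has already been computed above in the symmetrizer setting. The Thom isomorphism then yields $P_t^{G_\cO}(\cR_\lambda) = t^{2 d_\lambda} P_t^{G_\cO}(\cT_\lambda)$, and since $\cT_\lambda \to \Gr_G^\lambda$ is itself a pro-vector bundle, $P_t^{G_\cO}(\cT_\lambda) = P_t^{G_\cO}(\Gr_G^\lambda)$. Identifying $\Gr_G^\lambda \cong G_\cO/\mathrm{Stab}_{G_\cO}(\lambda)$, and using that the complex codimension of $\mathrm{Stab}_{G_\cO}(\lambda)$ in $G_\cO$ equals $2\langle\rho,\lambda\rangle$, the Borel--Moore convention produces $P_t^{G_\cO}(\Gr_G^\lambda) = t^{-4\langle\rho,\lambda\rangle} P_G(t;\lambda)$. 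Combining gives the stratum contribution $t^{2 d_\lambda - 4\langle\rho,\lambda\rangle} P_G(t;\lambda)$, in agreement with the twisted monopole formula.

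For (ii), I would invoke parity vanishing: each $\Gr_G^\lambda$ is an affine bundle over a partial flag variety of $G$, hence has pure, even-degree $G_\cO$-equivariant cohomology, so each stratum contributes only in even degrees. Combined with the even integer shifts $2 d_\lambda - 4\langle\rho,\lambda\rangle$, every contribution lands in even degrees, forcing the long exact sequences for the open-closed decompositions to split. The main obstacle is the rigorous treatment of the infinite-dimensional objects $\cT_\lambda$, $\cR_\lambda$, and $G_\cO$ as ind-pro schemes; these technicalities are already handled in \cite[\S2(ii)]{2016arXiv160103586B} and transfer once the assumption \eqref{assumption on Cartan} is used to ensure that the identifications $z = z_i^{f_{ij}} = z_j^{f_{ji}}$ interact cleanly with the pro-vector-bundle structure on $\cT_\lambda$ (so that, in particular, each fiber of $\cT_\lambda/\cR_\lambda$ is genuinely of rank $d_\lambda$ rather than an inseparable quotient). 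Beyond this, the proof is formally identical to the untwisted case.
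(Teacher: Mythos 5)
Your argument is correct and is essentially the paper's own proof: the paper simply invokes the argument of \cite[Proposition 2.7]{2016arXiv160103586B} after establishing that $\cR_\lambda\subset\cT_\lambda$ is a subbundle of corank $d_\lambda$ over $\Gr_G^\lambda$ (which is exactly where assumption (\ref{assumption on Cartan}) enters), and your stratification, Thom-isomorphism shift, identification $H^*_{G_\cO}(\Gr_G^\lambda)\cong H^*_{\mathrm{Stab}_G(\lambda)}(\mathrm{pt})$, and parity-splitting of the long exact sequences are precisely that argument spelled out. The only quibble is the phrase about an ``inseparable quotient'': $\cT_\lambda/\cR_\lambda$ is always a vector bundle of finite rank; the role of (\ref{assumption on Cartan}) is just that this rank is given by the displayed formula for $d_\lambda$.
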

In particular, taking the limit over $\mu$ we formally obtain:
\begin{equation}
\label{eq: poincare polynomial}
P_t^{G_\cO} (\cR) = \sum_\lambda t^{2 d_\lambda - 4 \langle \rho, \lambda\rangle} P_G(t; \lambda)
\end{equation}
However, we note that this expression may not converge even as a Laurent series.
\begin{NB}
  Let us consider the case $c_{12} = -1$, $c_{21} = -m$
  ($m\in \ZZ_{>0}$) as below. Then $g_{12} = 1$, $f_{12} = 1$,
  $f_{21} = m$. Hence we have
  \(
  \sum_{a=1}^{\bv_1}\sum_{b=1}^{\bv_2}
  \max( \lambda_2^b - m \lambda_1^a, 0).
  \)
  Thus our convention recovers the twisted monopole formula.
\end{NB}%

\begin{NB}
  Consider the case $f_{ji}=1$, hence $z_j=z$. Then
  \begin{equation*}
    \Hom_{\CC[[z]]}(\CC[[z_j]],\CC[[z_i]])
    \cong \Hom_{\CC[z_i]}(\pi_{ji}^*\CC[[z_j]], \CC[[z_i]])
    \cong \CC[[z_i]].
  \end{equation*}
  Then we consider the quotient
  \[ z_i^{\lambda_i} z_j^{-\lambda_j}\CC[[z_i]] /
    \left(z_i^{\lambda_i} z_j^{-\lambda_j}\CC[[z_i]]\cap
      \CC[[z_i]]\right) = z_i^{\lambda_i - f_{ij}\lambda_j}
    \CC[[z_i]]/ \left(z_i^{\lambda_i - f_{ij}\lambda_j} \CC[[z_i]]\cap
      \CC[[z_i]]\right).
  \]
  This is $\max(f_{ij}\lambda_j - \lambda_i,0)$-dimensional.

  Next consider the case $f_{ij}=1$, $z_i = z$. We have
  \begin{equation*}
      z_i^{\lambda_i} z_j^{-\lambda_j}
      \Hom_{\CC[[z]]}(\CC[[z_j]], \CC[[z_i]])
      = z_j^{f_{ji}\lambda_i - \lambda_j}
      \Hom_{\CC[[z]]}(\CC[[z_j]], \CC[[z]]).
  \end{equation*}
  Hence the answer depends only on $f_{ji}\lambda_i-\lambda_j$, and
  hence we may assume $\lambda_i=0$. Moreover we may assume
  $\lambda_j > 0$, as
  $\operatorname{rank} \cT_\lambda/\cR_\lambda = 0$ otherwise. We
  consider the codimension of the embedding
  \begin{equation*}
      \Hom_{\CC[[z]]}(\CC[[z_j]],\CC[[z]])
      \hookrightarrow \Hom_{\CC[[z]]}(z_j^{\lambda_j}\CC[[z_j]], \CC[[z]]).
  \end{equation*}
  Suppose $\lambda_j = f_{ji}k'+k''$ with $0\le k'' < m$. Then an
  element $\xi$ in the right hand side is contained in the left hand
  side if and only if $\xi(z_j^{\lambda_j})$ is divisible by $z^{k'}$,
  $\xi(z_j^{\lambda_j+1})$ is divisible by $z^{k'}$, \dots,
  $\xi(z_j^{\lambda_j+(f_{ji}-k'')}$ is divisible by $z^{k'+1}$, \dots
  $\xi(z_j^{\lambda_j+f_{ji}-1}$ is divisible by $z^{k'+1}$. In total
  we have $k'(f_{ji}-k'') + (k'+1)k'' = f_{ji}k'+ k'' = \lambda_j$
  equations.
  In other words, the quotient is
  \begin{multline*}
      \Hom_\CC(z_j^{\lambda_j}\CC[[z_j]]/z_j^{\lambda_j+f_{ji}-k''}\CC[[z_j]],
      \CC[[z]]/z^{k'}\CC[[z]])
\\
      \oplus
      \Hom_\CC(z_j^{\lambda_j+f_{ji}-k'}\CC[[z_j]]/z_j^{\lambda_j+f_{ji}-1}\CC[[z_j]],
      \CC[[z]]/z^{k'+1}\CC[[z]]).
  \end{multline*}
  Therefore the codimension is $\lambda_j$.
\end{NB}%

\begin{NB}
  The following is added on Dec.~28.
\end{NB}%
The monopole formula is closely related to this Poincar\'e polynomial: the contribution $\Delta(\lambda)$ mentioned above is given by
\begin{align*}
  \Delta(\lambda) &:= d_\lambda - 2 \langle \rho, \lambda\rangle - \frac12 \sum_{i\to j} g_{ij}
  \sum_{a=1}^{\bv_i}\sum_{b=1}^{\bv_j}
  (f_{ij} \lambda_j^b - f_{ji} \lambda_i^a) \\
  &= - \sum_i \sum_{1 \leq a < b \leq \bv_i} | \lambda_i^a - \lambda_i^b | + \frac{1}{2} \sum_{i\to j} g_{ij}
  \sum_{a=1}^{\bv_i}\sum_{b=1}^{\bv_j}
  |f_{ij} \lambda_j^b - f_{ji} \lambda_i^a|
\end{align*}
The difference \(\frac12 \sum_{i\to j} g_{ij}
  \sum_{a=1}^{\bv_i}\sum_{b=1}^{\bv_j}
  (f_{ij} \lambda_j^b - f_{ji} \lambda_i^a)\) depends only on the sums $\sum_a \lambda_i^a$, $\sum_b \lambda_j^b$.   In particular, it is possible to view the twisted monopole formula as the Poincar\'e polynomial of $\cR$, but with respect to a {\em different} grading (i.e.~different from the homological grading).
See \cite[Remark~2.8]{2016arXiv160103586B}.

\begin{NB2}
July 4: Added remark below -- is it too imprecise?
\end{NB2}
\begin{Remark}
If the assumption (\ref{assumption on Cartan}) does not hold, then the ranks of $\cT_\lambda / \cR_\lambda$ are generally not given by such a simple formula.  The corresponding Poincar\'e polynomial (and monopole formula) is thus more complicated.
\end{Remark}

\subsection{Convolution product}
\label{subsection: convolution product}

The definition of the convolution product on $H^{G_\cO}_*(\cR)$ goes
exactly as in \cite[3(iii)]{2016arXiv160103586B}. Moreover, we have an
algebra embedding $\bz^*\colon H^{G_\cO}_*(\cR)\to H^{G_\cO}_*(\Gr_G)$
as in \cite[\S5(iv)]{2016arXiv160103586B}, where
$\bz\colon\Gr_G\to \cR$ is the embedding.
The algebra $H^{G_\cO}_*(\Gr_G)$ is isomorphic to that which appears in the ordinary construction (i.e.~all $z_i$'s are replaced by $z$).
\begin{NB}
  We must be careful when we consider the quantization, as it is given
  by the loop rotation.
\end{NB}%
In particular, $H^{G_\cO}_*(\Gr_G)$ is commutative, and therefore
$H^{G_\cO}_*(\cR)$ is commutative as well. We define the Coulomb branch as
\begin{equation*}
  \cM_C \defeq \operatorname{Spec} H^{G_\cO}_*(\cR).
\end{equation*}
There is an algebra homomorphism $H_G^\ast(\mathrm{pt}) \rightarrow H_*^{G_\cO}(\cR)$, as in \cite[\S 3(vi)]{2016arXiv160103586B}. 

The algebra $H_*^{G_\cO}(\cR)$ is filtered in the same was as
\cite[Section 6(i)]{2016arXiv160103586B}, and as in
\cite[Proposition~6.8]{2016arXiv160103586B} we can prove that
$\cA$ is finitely generated.

The proof of the normality in
\cite[Proposition~6.12]{2016arXiv160103586B} was given by the
reduction to the cases when the gauge group is $\CC^\times$, $\SL(2)$
or $\PGL(2)$. That argument is applicable in our situation, and we are
reduced to the case a quiver with a single vertex. Then our
modification of the definition of the Coulomb branch is unnecessary and returns back to the original
situation. Therefore we see that $\cM_C$ is normal.

Finally, $\cM_C$ has a natural deformation quantization $\cA_\hbar$ defined below in \cref{sec:quant}.  This endows $\cM_C$ with a Poisson structure, which is generically symplectic as in \cite[Proposition 6.15]{2016arXiv160103586B}.  The subalgebra $H_G^\ast(\mathrm{pt})$ is Poisson commutative, and defines an integrable systems on $\cM_C$, see \cite[Section 1(iii)]{2016arXiv160103586B}.

\begin{Theorem}
$\cM_C$ is an irreducible normal variety of finite type.  It carries a Poisson structure which is generically symplectic, with an integrable system $\cM_C \rightarrow \operatorname{Spec} H_G^\ast(\mathrm{pt})$.
\end{Theorem}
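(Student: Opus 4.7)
The preceding subsection has essentially assembled all the required ingredients, so my plan is to verify that each argument from \cite{2016arXiv160103586B} transfers to the symmetrizable setting and then collect the outputs into the four assertions of the theorem. Since the convolution product on $H^{G_\cO}_*(\cR)$ is defined exactly as in the untwisted case, and the embedding $\bz^*\colon H^{G_\cO}_*(\cR)\hookrightarrow H^{G_\cO}_*(\Gr_G)$ is constructed in the same way, most arguments transfer with only notational changes.

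For finite type and commutativity: finite generation of $\cA$ has already been noted, following the filtration argument of \cite[Proposition~6.8]{2016arXiv160103586B}, so $\cM_C$ is of finite type; commutativity of $\cA$ follows from that of $H^{G_\cO}_*(\Gr_G)$ via $\bz^*$. For irreducibility I would argue that after inverting the Euler classes of the normal bundles at torus fixed points, $H^{G_\cO}_*(\Gr_G)$ becomes a free module over the polynomial ring $H^*_G(\mathrm{pt})$, and in particular a domain; then $\cA\hookrightarrow H^{G_\cO}_*(\Gr_G)$ inherits the domain property, so $\cM_C$ is irreducible.

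Normality reduces to the rank-one case, following \cite[Proposition~6.12]{2016arXiv160103586B}. One verifies Serre's criterion by restricting to codimension-one loci in $\Spec H^*_G(\mathrm{pt})$, which localizes the situation to a sub-quiver carrying a single vertex with gauge group $\CC^\times$, $\SL(2)$, or $\PGL(2)$. In such a reduction no edge is involved, so the symmetrizer data disappears entirely and we are returned to the original untwisted construction where normality is already known. The reduction argument itself is formal and uses only the convolution structure, so it transfers verbatim.

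Finally, for the Poisson structure and integrable system I would invoke the deformation quantization $\cA_\hbar$ of \cref{sec:quant}: its $\hbar$-commutator induces a Poisson bracket on $\cA$, and the subalgebra $H^*_G(\mathrm{pt})\subset\cA$ is Poisson-commutative because its quantum lift is already commutative (it arises from $H^*_{G_\cO}(\mathrm{pt}) = H^*_G(\mathrm{pt})$ pulled back via the projection $\cR\to\mathrm{pt}$). Generic symplecticity and the half-dimensionality of $H^*_G(\mathrm{pt})$ go as in \cite[Proposition~6.15]{2016arXiv160103586B} and \cite[\S1(iii)]{2016arXiv160103586B}. The main obstacle I anticipate is precisely this dimension count: the rank formula for $\cT_\lambda/\cR_\lambda$ computed in \cref{subsection: twisted monopole formula} differs from the untwisted one through the factors $f_{ij}$ and $f_{ji}$, so one must verify carefully that the equality $\dim\cM_C = 2\dim\Spec H^*_G(\mathrm{pt})$ persists. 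Once the symplectic form is identified near a generic point of $\Gr_G$, however, this should reduce to a direct computation in terms of the modified fiber dimensions displayed above.
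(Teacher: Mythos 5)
Your outline coincides with the paper's own justification of this theorem, which is exactly the same assembly of arguments from \cite{2016arXiv160103586B}: commutativity via $\bz^*$, finite generation as in Proposition~6.8 there, normality via the reduction of Proposition~6.12 to the single-vertex (semisimple rank one) situation where the modified construction returns to the ordinary one, and the Poisson structure, generic symplecticity and integrable system coming from the quantization $\cA_\hbar$ as in Proposition~6.15 and \S1(iii) of that paper. Three small repairs to your write-up: integrality of the algebra follows because, after localization, it embeds into a localization of $H^{T_\cO}_*(\Gr_T)$, a Laurent-polynomial-type ring over $H^*_T(\mathrm{pt})$ which is visibly a domain (freeness over $H^*_G(\mathrm{pt})$ by itself does not imply the domain property); in the rank-one reduction the edges adjacent to the surviving gauged vertex do still contribute matter, and the reason the symmetrizers drop out is that, once the other endpoint is no longer gauged, pushing forward along the covering $\pi_{ij}$ identifies $\Hom_{\shfO_D}(\pi_{ij*}(V_j\otimes\shfO_{D_j}),\pi_{ji*}\cE_i)$ with ordinary framing-type matter $\Hom_{\shfO_{D_i}}(V_j\otimes\shfO_{D_i}^{\oplus f_{ji}},\cE_i)$, compatibly with the extension conditions; and your final worry about the dimension count is vacuous, since $\dim\cM_C=2\rank G$ and the generic fibre of $\cM_C\to\Spec H^*_G(\mathrm{pt})$ is unaffected by the modified fibers $\cT_\lambda/\cR_\lambda$, so the argument of Proposition~6.15 applies verbatim.
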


\begin{Remark}
  As in \cite[Remark~3.9(3)]{2016arXiv160403625B}, one can also
  consider the $K$-theoretic Coulomb branch.
\end{Remark}

\begin{NB}
  This remark is added on May 10.
\end{NB}%

\section{Examples}\label{sec:example}

\subsection{}\label{subsec:1m}

We consider the case $I = \{1,2\}$, $c_{12} = -1$, $c_{21} = -m$
(where $m\in\ZZ_{>0}$), $\bw_{1}=\bw_2 = 0$, and $\bv_1 = \bv_2 = 1$. We choose the orientation $1\leftarrow 2$. Note that
$G$ is the two dimensional torus.
We consider the embedding
$\bz^*\colon H^{G_\cO}_*(\cR)\to H^{G_\cO}_*(\Gr_G)$ in
\cite[\S5(iv)]{2016arXiv160103586B}. Let $w_1$, $w_2$ be generators of
the equivariant cohomology ring of a point for the first and second factors of
$G = (\CC^\times)^2$. Let $\sfu_{a,b}$ denote the fundamental class of
the point $(a,b)\in\Gr_G = \ZZ^2$. We have 
\begin{equation*}
  H^{G_\cO}_*(\Gr_G) = \bigoplus_{a,b} \CC[w_1,w_2] \sfu_{a,b}
\end{equation*}
with $\sfu_{a,b} \sfu_{a',b'} = \sfu_{a+a',b+b'}$. Let $\sfy_{a,b}$
denote the fundametal class of the fiber of $\cR\to\Gr_G$ over
$(a,b)$. Then we have
\begin{equation*}
  \bz^*(w_1) = w_1, \quad \bz^*(w_2) = w_2, \quad
  \bz^*(\sfy_{a,b}) = (w_1-w_2)^{\max(b-ma,0)}\sfu_{a,b}.
\end{equation*}
\begin{NB}
    Nov.\ 26 : The formula was $\max(mb-a,0)$ in the earlier
    version. But it is not compatible with the twisted monopole formula.
\end{NB}%
Note that $\sfy_{1,m}=\sfu_{1,m}$ is invertible, with inverse $\sfy_{-1,-m}$. Therefore
$$H^{G_\cO}_*(\cR)\cong\CC[w_1, \sfy_{1,m}^\pm, \sfy_{0,1}, \sfy_{0,-1}]$$
Note for example, $w_1 - w_2 = \sfy_{0,1} \sfy_{0,-1}$. Thus the Coulomb
branch is $\cM_C = \BA^3\times\BA^\times$.
\begin{NB}
  This is an earlier version:
\begin{gather*}
  \bz^*(\sfy_{1,0}) = \sfu_{1,0}, \qquad
  \bz^*(\sfy_{k,1}) = (w_1 - w_2)^{m-k} \sfu_{k,1} \quad
  (k=0,1,\dots,m).
\end{gather*}
\begin{equation*}
  \begin{split}
    & \bz^*(\sfy_{k,1}) = (w_1 - w_2)^{m-k} \sfu_{k,1}
  = \bz^*(\sfy_{m-1,1})^{m-k} \bz^*(\sfy_{m,1})^{1+k-m}
  = \bz^*\left((\sfy_{m-1,1})^{m-k} (\sfy_{m,1})^{1+k-m}\right), \\
  & w_1 - w_2 = \sfu_{1,0} (w_1-w_2) \sfu_{m-1,1} (\sfu_{m,1})^{-1}
  = \bz^*(\sfy_{1,0} \sfy_{m-1,1} (\sfy_{m,1})^{-1}).
  \end{split}
\end{equation*}
Thus
$H^{G_\cO}_*(\cR)\cong\CC[w_1, (\sfy_{m,1})^\pm, \sfy_{1,0}, \sfy_{m-1,1}]$,
i.e., the Coulomb branch is $\BA^3\times\BA^\times$.
\end{NB}%

On the other hand, let us consider the folding of the Coulomb branch
of the quiver gauge theory $I = \{1,2_1,2_2,\dots, 2_m\}$ with edges
$1 \mbox{--} 2_j$ for all $j=1,\dots, m$ with $\bw_i = 0$, $\bv_i = 1$
for all $i\in I$. We consider the $\ZZ/m$-action on the quiver given
by $2_1\to 2_2\to \cdots \to 2_m\to 2_1$. See Figure~\ref{fig:G2} for
$m=3$. In order to distinguish the two groups for this theory and the former
gauge theory, let us write $\hat G = \prod \GL(V_i)$.
Note that the diagonal scalar $\CC^\times$ in $\hat G$
acts trivially on $\bN$, and so we have
$\hat G \cong \CC^\times\times (\CC^\times)^m$, $\bN = \CC^m$ where the
first $\CC^\times$ acts trivially on $\bN$, and $(\CC^\times)^m$ acts
on $\bN$ in the standard way. Therefore the (usual) Coulomb branch for $(\hat G, \bN)$ is
$\widehat{\cM}_C = \BA\times\BA^\times\times (\BA^2)^m$ and $\ZZ/m$ acts by cyclically permutating the
factors of $(\BA^2)^m$. Therefore the fixed point locus is also
$\BA\times\BA^\times\times \BA^2$.
Thus the former Coulomb branch is isomorphic to the fixed point locus
of the latter Coulomb branch:
\begin{Proposition}
For the above data, there is an isomorphism $\cM_C \cong (\widehat{\cM}_C)^{\ZZ/m}$.
\end{Proposition}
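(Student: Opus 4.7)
The plan is to verify the isomorphism by explicitly identifying the two coordinate rings, building directly on the computations already performed in the preceding paragraphs.

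First I would recall from the calculation immediately above that
\[
\cM_C \;=\; \Spec \CC[w_1, \sfy_{0,1}, \sfy_{0,-1}, \sfy_{1,m}^{\pm 1}] \;\cong\; \BA^3 \times \BA^\times,
\]
with $w_2 = w_1 - \sfy_{0,1}\sfy_{0,-1}$ recovered from the chosen generators.

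Next I would compute $(\widehat{\cM}_C)^{\ZZ/m}$ explicitly. Since $\hat G$ is a torus and only the diagonal $\CC^\times$ acts trivially on $\bN$, the standard abelian Coulomb branch description yields a decomposition
\[
\widehat{\cM}_C \;\cong\; T^*\CC^\times \,\times\, \prod_{j=1}^m \BA^2_j,
\]
where $T^*\CC^\times = \Spec\CC[\hat w_{\mathrm{diag}}, \hat v^{\pm 1}]$ accounts for the trivially-acting diagonal, and each $\BA^2_j = \Spec \CC[\hat u_{+,j}, \hat u_{-,j}]$ is the Coulomb branch of the subtheory $(\CC^\times, \CC)$, subject to $\hat u_{+,j}\hat u_{-,j} = \hat w_1 - \hat w_{2_j}$. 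The $\ZZ/m$-action cyclically permutes the factors $\BA^2_j$ while fixing $T^*\CC^\times$, so the fixed subscheme is cut out by the ideal generated by $\hat u_{\pm,j+1} - \hat u_{\pm,j}$ and $\hat w_{2_{j+1}} - \hat w_{2_j}$ for $j \in \ZZ/m$. Writing $\hat u_\pm$, $\hat w_2$ for the common values, we obtain $(\widehat{\cM}_C)^{\ZZ/m} \cong \Spec \CC[\hat w_1, \hat u_+, \hat u_-, \hat v^{\pm 1}] \cong \BA^3 \times \BA^\times$, with $\hat w_2$ determined by $\hat u_+\hat u_- = \hat w_1 - \hat w_2$.

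Finally I would define the isomorphism by the matching of generators
\[
w_1 \mapsto \hat w_1, \qquad \sfy_{0,1} \mapsto \hat u_+, \qquad \sfy_{0,-1} \mapsto \hat u_-, \qquad \sfy_{1,m}^{\pm 1} \mapsto \hat v^{\pm 1},
\]
under which $w_2 \mapsto \hat w_2$ and the two defining relations $\sfy_{0,1}\sfy_{0,-1} = w_1 - w_2$ and $\hat u_+\hat u_- = \hat w_1 - \hat w_2$ correspond.

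The main subtlety I expect concerns the matching $\sfy_{1,m} \leftrightarrow \hat v$. Under the fixed-point inclusion of coweight lattices $\ZZ^2 \hookrightarrow \ZZ^{m+1}$, $(a,b) \mapsto (a,b,\ldots,b)$, the folded coweight $(1,m)$ does \emph{not} go to the unfolded diagonal coweight $(1,1,\ldots,1)$; the matching therefore cannot be obtained simply by pulling classes back along this inclusion. However, both $\sfy_{1,m}$ and $\hat v$ are characterized intrinsically as the unique-up-to-scalar invertible monopole operators of minimal degree in their respective theories (both reflecting that the appropriate central $\CC^\times$ acts trivially on $\bN$), so the assignment is forced. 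A more conceptual justification, which should generalize beyond this toy example, would be to compare both sides through their common integrable system base $\Spec H^*_G(\mathrm{pt})$.
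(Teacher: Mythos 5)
Your proposal is correct and follows essentially the same route as the paper: both sides are computed explicitly as $\BA^3\times\BA^\times$, using the splitting of $(\hat G,\bN)$ into the trivially-acting diagonal $\CC^\times$ (contributing $\BA\times\BA^\times$) times $m$ copies of $(\CC^\times,\CC)$ (contributing $(\BA^2)^m$, cyclically permuted by $\ZZ/m$), and the generators are then matched. Your identification $\sfy_{1,m}\leftrightarrow\hat v$ agrees with the paper's, which realizes $\sfy_{a,b}$ as the restriction of $\hat\sfy_{a,b_1,\dots,b_m}$ for the balanced lift $b_1\ge\dots\ge b_m\ge b_1-1$, $\sum b_j=b$ (so $(1,m)\mapsto(1,1,\dots,1)$), exactly resolving the subtlety you raise.
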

More concretely $w_1$, $w_2$ are identified with equivariant variables
for $\GL(V_1)$ and $\GL(V_{2_j})$, where the latter is independent of
$j$ on the $\ZZ/m$-fixed point locus. The function $\sfy_{a,b}$ is
identified with the restriction of the function
$\hat\sfy_{a,b_1,\dots,b_m}$ on $\widehat{\cM}_C$ given by the fundamental class over
$(a, b_1,\dots, b_m)\in \Gr_{\hat G}=\ZZ^{1+m}$ where
$b_1\ge\dots\ge b_m\ge b_1 - 1$ and $b=b_1+\dots+b_m$ (cf.\ the proof
of \cite[Prop.~4.1]{2016arXiv160403625B}).  
\begin{NB}
    $\sfy_{1,m} = \text{fundamental class over $(1,1,\dots,1)$}$,
    $\sfy_{0,1} = \text{fundamental class over $(0,1,0,\dots,0)$}$,
    $\sfy_{0,-1} = \text{fundamental class over $(0,0,\dots,0,-1)$}$.
\end{NB}%

\begin{NB}
  In \cite{bdf}, $d_i = \alpha_i\cdot \alpha_i$, where $\alpha_i$ is a
  simple \emph{coroot}. Then
  $d_i = \alpha_i\cdot\alpha_i/2\in\{ 1,2,3\}$, where
  $\alpha_i\cdot\alpha_i = 2$ for a short coroot. In \cite[5.7]{bdf},
  $d_i = 2$, $d_j=1$. The Dynkin diagram is
  $\circ_i \Leftarrow \circ_j$. Then $V_{\check\omega_i}$ is the
  natural four dimensional representation of $B_2 = \mathfrak{sp}(4)$,
  $V_{\check\omega_j}$ is the $5$-dimensional representation contained
  in $\Wedge^2 V_{\check\omega_j}$. So $i$ is $2$, $j$ is $1$.
\end{NB}%

In the cases $m=2, 3$, we can identify our modified Coulomb branch with an \emph{open zastava space} $\oZ^\alpha$.  Recall that $\oZ^\alpha$ is the moduli space of based maps from $\mathbb{P}^1$ to the flag variety, of degree $\alpha$, see \cite[\S 2]{bdf}, \cite[\S 2(i)]{2016arXiv160403625B}.

\begin{Lemma}
\label{lemma: open zastava}
For $m=2$ (resp.~$m=3$), $\cM_C$ is isomorphic to the open zastava space $\oZ^{\alpha_1+\alpha_2}$ of type $B_2$ (resp.~type $G_2$).
\end{Lemma}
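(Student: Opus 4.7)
The plan is to construct an explicit isomorphism of coordinate rings, matching the generators of $\CC[\cM_C] = \CC[w_1, \sfy_{1,m}^{\pm 1}, \sfy_{0,1}, \sfy_{0,-1}]$ (with $w_2 = w_1 - \sfy_{0,1}\sfy_{0,-1}$) with natural coordinates on the zastava side. Both varieties are smooth, irreducible, affine of dimension $4$: the Coulomb side equals $\BA^3 \times \BA^\times$ by the preceding computation, while $\oZ^{\alpha_1+\alpha_2}$ is smooth of dimension $2|\alpha_1+\alpha_2|=4$ in both types $B_2$ and $G_2$. The desired isomorphism should intertwine the natural torus actions; in particular $w_1, w_2$ are forced to correspond to (minus) the roots of the two degree-one monic polynomials $P_1, P_2$ appearing in the factorization morphism $\oZ^{\alpha_1+\alpha_2} \to \BA^{(\alpha_1)} \times \BA^{(\alpha_2)}$.

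For $m = 2$ (type $B_2$), I would use the explicit Plücker-type description of rank-$2$ zastava from \cite[\S 2]{bdf}. In that description, a point of $\oZ^{\alpha_1+\alpha_2}$ is given by the polynomials $P_1 = z-w_1$, $P_2 = z-w_2$ together with auxiliary Plücker data coming from the embedding of the flag variety of $\grpSp(4)$ in a product of projective spaces. I would send $\sfy_{1,m}$ to a leading Plücker coordinate (necessarily non-vanishing, accounting for the $\BA^\times$ factor) and $\sfy_{0,\pm 1}$ to lower Plücker data at vertex $2$, calibrated so that the forced relation $\sfy_{0,1}\sfy_{0,-1} = w_1 - w_2$ matches a corresponding identity on the zastava side. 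For $m = 3$ (type $G_2$) the plan is identical, but using the explicit presentation of $\CC[\oZ^{\alpha_1+\alpha_2}]$ given in \cref{sec:app}.

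The main obstacle is checking that the candidate map is a well-defined ring homomorphism, i.e.~that the images of the generators satisfy all relations of the target ring. For $B_2$ this is a short direct verification. For $G_2$ the relations listed in \cref{sec:app} are considerably more intricate (owing to the degree-$3$ branched covering $\pi_{12}\colon D_2 \to D$), and this verification will be the substantive computation. Once well-definedness is established, the map is an isomorphism by a dimension-and-integral-domain argument: both rings are integral domains of Krull dimension $4$, the restriction to the Poisson-central subalgebra $\CC[w_1,w_2]$ is visibly injective (it is the base of the integrable system on both sides), and surjectivity then follows from the explicit generation statement.

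Alternatively, one can bypass an explicit coordinate match using the folding Proposition above, which gives $\cM_C \cong (\widehat{\cM}_C)^{\ZZ/m}$ for $\widehat{\cM}_C$ the Coulomb branch of the unfolded $A_3$ (resp.\ $D_4$) theory with $\bv = (1,\dots,1)$, $\bw = 0$. By the main result of \cite[\S 3]{2016arXiv160403625B}, $\widehat{\cM}_C \cong \oZ^{\hat\alpha}$ for $\hat\alpha = \sum_i \hat\alpha_i$. The diagram automorphism of the unfolded Dynkin graph acts on $\oZ^{\hat\alpha}$ through its action on based maps $\proj^1 \to G/B$, and its fixed point locus ought to be $\oZ^{\alpha_1+\alpha_2}$ in the folded type; this final identification would follow from the factorization structure of zastava together with the fact that both fixed loci are smooth $4$-folds carrying the same torus action. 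Even in this alternative approach, the explicit coordinates furnished by \cite{bdf} and \cref{sec:app} will be useful to pin down the isomorphism canonically.
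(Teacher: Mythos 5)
Your proposal follows essentially the same route as the paper: its proof simply writes down the explicit dictionary ($w_1=-A_2$, $w_2=-A_1$, $\sfy_{1,2}=b_{03}$, $\sfy_{0,1}=b_{01}$, $\sfy_{0,-1}=b_{02}b_{03}^{-1}$, with $b_{03}$ invertible) between the presentation $\CC[\cM_C]=\CC[w_1,\sfy_{1,m}^{\pm1},\sfy_{0,1},\sfy_{0,-1}]$ and the coordinates of the $B_2$ open zastava in \cite[\S5.7]{bdf}, and for $m=3$ it appeals to the coordinate presentation of the $G_2$ zastava from \cref{sec:app}, exactly as you outline. Your alternative via folding is likewise already contained in the surrounding discussion (the fixed-point Proposition and the compatibility of coordinates with folding), so it is not a genuinely different argument.
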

\begin{proof}
Explicitly, we identify our description for $m=2$ with the $B_2$ type open zastava
space \cite[\S5.7]{bdf}\footnote{$d_i$ in \cite{bdf} is the square
  length of the simple \emph{coroot} for $i$, while it is of the
  simple \emph{root} here. Therefore $i$ (resp. $j$) in \cite{bdf} is
  $2$ (resp.\ $1$) here.
  \begin{NB}
    The footnote is added on May 10.
  \end{NB}%
} by $w_1 = -A_2$, $w_2 = -A_1$,
$\sfy_{1,2} = b_{03}$, $\sfy_{0,1} = b_{01}$,
and $\sfy_{0,-1} = b_{02} b_{03}^{-1}$, noticing that $b_{03}$ is
invertible.  
\begin{NB}
    $\sfy_{1,1} = \sfy_{1,2} \sfy_{0,-1} = b_{02}$,
    $b_{12} = b_{02}^2 b_{03}^{-1} = \sfy_{1,1}^2 \sfy_{1,2}^{-1} =
    \sfy_{1,0}$,
    $b_{02}(A_1 - A_2) = \sfy_{1,1}(w_1-w_2) = \sfy_{0,1} \sfy_{1,0} =
    b_{01} b_{12}$,
    $b_{03}(A_1 - A_2) = \sfy_{1,2}(w_1-w_2) = \sfy_{0,1} \sfy_{1,1} =
    b_{01} b_{02}$.

  Note $y_i= b_{01} = \sfy_{0,1}$,
  $y_j = b_{12} = \sfy_{1,0} = (\sfy_{0,-1})^2 \sfy_{1,2}$.
\end{NB}%

Similarly, in the $m=3$ case we appeal to the description of the $G_2$ type open zastava in terms of coordiantes, from \cref{sec:app}. 
\end{proof}

Another perpsective on this result is via folding.  Recall that there is an \'etale rational coordinate system
$(y_{i,r}, w_{i,r})_{i\in I, 1\le r\le \bv_i}$ on the open zastava
space $\oZ^\alpha$ for finite type \cite{fkmm,bdf}. We claim that it is
compatible with  folding of the same type described above, namely the coordinate system for
$B_2$, $G_2$ is the restriction of the coordinate system for $A_3$,
$D_4$ to the $\ZZ/m$-fixed point ($m=2,3$), respectively.
For $B_2$ with the above choice of $\bv$, this can be checked directly from
\cite[\S5.7]{bdf} as $y_i = b_{01} = \sfy_{0,1}$,
$y_j = b_{12} = b_{02}^2 b_{03}^{-1} = \sfy_{1,0}$.
In general, it is enough to check the assertion when $\bv_i$ is $1$
dimensional for a single vertex $i$ and $0$ otherwise by the
compatibility of the coordinate system and the factorization in
\cite[Th.~1.6(3)]{bdf}, as the factorization and folding are
compatible. 
In that case, a based map factors through $\CP^1$ via the
embedding of $\CP^1$ into the flag variety corresponding to the vertex
$i$. Then the assertion is clear.
Alternatively we use the description from \cref{sec:app} for $G_2$ to
argue as in the $B_2$ case.

Recall that the isomorphism between $\oZ^\alpha$ and the corresponding Coulomb
branch was defined so that the coordinate system $(y_{i,r}, w_{i,r})$
is mapped to $(\sfy_{i,r},w_{i,r})$, where the latter $w_{i,r}$ is an
equivariant variable as above, and $\sfy_{i,r}$ is the fundamental
class of the fiber over the point corresponding to $w_{i,r}$
\cite[\S3]{2016arXiv160403625B}. Since the coordinate system is
compatible with the above folding, and $\sfy_{1,0}$, $\sfy_{0,1}$ for
$B_2$, $G_2$ are restriction of appropriate $\hat\sfy_{1,0,\dots,0}$,
$\hat\sfy_{0,1,0,\dots,0}$ for $A_3$, $D_4$, the coordinate system
$(y_{1,0}, y_{0,1}, w_1, w_2)$ for $B_2$, $G_2$ is identified with
$(\sfy_{1,0}, \sfy_{0,1}, w_1, w_2)$.

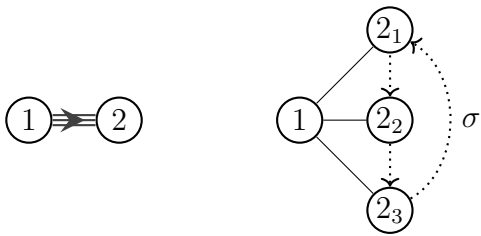
\begin{figure}[htbp]
    \centering
\begin{tikzpicture}[scale=1.2,
circled/.style={circle,draw
,thick,inner sep=0pt,minimum size=6mm},
squared/.style={rectangle,draw
,thick,inner sep=0pt,minimum size=6mm},
triplearrow/.style={
  draw=black!75,
  color=black!75,
  thick,
  double distance=3pt, 
  decoration={markings,mark=at position .75 with {\arrow[scale=.7]{>}}},
  postaction={decorate},
  >=stealth}, 
thirdline/.style={draw=black!75, color=black!75, thick, -
}
]
\node[circled] (v1) at ( 1,0)  {$1$};
\node[circled] (v2) at ( 2,0)  {$2$};
\draw[triplearrow] (v1.east) -- (v2.west);
\draw[thirdline] (v1.east) -- (v2.west);

\node[circled] (vv1) at ( 4,0)  {$1$};
\node[circled] (vv2) at ( 5,1)  {$2_1$};
\node[circled] (vv4) at ( 5,-1)  {$2_3$}
   edge [->,bend right=60,dotted,thick] node[auto,swap] {$\sigma$} (vv2);
\node[circled] (vv3) at ( 5,0)  {$2_2$}
  edge [<-,dotted,thick] (vv2)
  edge [->,dotted,thick] (vv4);
\draw [-] (vv1) -- (vv2);
\draw [-] (vv1) -- (vv3);
\draw [-] (vv1) -- (vv4);
\end{tikzpicture}
\caption{$G_2$ and the folding of $D_4$}
    \label{fig:G2}
\end{figure}

\begin{NB}
  The following two subsections are added on Dec.~28.
\end{NB}%

%

\begin{NB2}
July 12: Commented out the subsection here on general rank 2 cases.  Moved to Appendix C instead
\end{NB2}

\subsection{}
\label{subsec:1mzastava}

As in \cite[\S3(ii)]{2016arXiv160103586B}, we may consider the positive part
$\Gr^+_G$ of the affine Grassmannian $\Gr_G$: the
subvariety consisting of $(\mathcal E_i,\varphi_i)$ such that
$\varphi_i$ extends through the puncture as an embedding
$\mathcal E_i\hookrightarrow V_i\otimes_\CC\shfO_{D_i}$.
We define $\cR^+ \subset \cR$ as the preimage.
Then $H^{G_\cO}_*(\cR^+)$ forms a convolution subalgebra of
$H^{G_\cO}_*(\cR)$, equipped with an algebra homomorphism
$H^{G_\cO}_*(\cR^+)\to H^*_{G}(\mathrm{pt})$.

Let us consider $H^{G_\cO}_*(\cR^+)$ for the example in
\cref{subsec:1m}. It is the subalgebra generated by $w_1$, $w_2$,
$\sfy_{a,b}$ with $a$, $b\ge 0$. It is easy to check that it is, in
fact, generated by $w_1$, $w_2$, $\sfy_{0,1}$, $\sfy_{1,0}$,
$\sfy_{1,1}$, \dots, $\sfy_{1,m}$.
\begin{NB}
  Note $\sfy_{0,1} = (w_1 - w_2) \sfu_{0,1}$,
  $\sfy_{1,0} = \sfu_{1,0}$, $\sfy_{1,1} = \sfu_{1,1}$, \dots,
  $\sfy_{1,m} = \sfu_{1,m}$. If $(a,b)$ is above the line $y = mx$, we
  have $\sfy_{a,b} = \sfy_{a,ma} \sfy_{0,b-ma}$. If it is below the
  line, we write $b = b_1 + b_2 + \cdots + b_a$ with $0\le b_i\le m$,
  then $\sfy_{a,b} = \sfy_{1,b_1} \sfy_{1,b_2}\cdots \sfy_{1,b_a}$.
\end{NB}%
We have $\sfy_{1,b} \sfy_{0,1} = (w_1 - w_2) \sfy_{1,b+1}$ for
$0\le b\le m-1$, and
$\sfy_{1,b_1}\sfy_{1,b_2}\cdots = \sfy_{1,b_1'} \sfy_{1,b_2'}\cdots$ 
for $0\le b_i, b'_i\le m$ with $b_1+b_2 +\cdots = b_1' + b_2' +\cdots$.

If $m=2$, the only nontrivial relation of the latter type is
$\sfy_{1,0}\sfy_{1,2} = \sfy_{1,1}^2$.
\begin{NB}
  The next possible candidate is for $b_1+b_2+b_3=3$, but it is
  $\sfy_{1,0} \sfy_{1,1} \sfy_{1,2} = \sfy_{1,1}^3$, and can be
  deduced from the above. Next one is
  $\sfy_{1,1}^2 \sfy_{1,2} = \sfy_{1,0} \sfy_{1,2}^2$, which is also a
  consequence of the above.
\end{NB}%
This coincides with the presentation of the $B_2$ type zastava space
\cite[\S5.7]{bdf} by $w_1 = -A_2$, $w_2 = -A_1$,
$\sfy_{0,1} = b_{01}$, $\sfy_{1,0} = b_{12}$, $\sfy_{1,1} = b_{02}$,
$\sfy_{1,2} = b_{03}$.

If $m=3$, we have two 
\begin{NB}
    corrected on Mar.~6.
\end{NB}%
more relations
$\sfy_{1,0} \sfy_{1,3} = \sfy_{1,1}\sfy_{1,2}$,
$\sfy_{1,1}\sfy_{1,3} = \sfy_{1,2}^2$.  We cannot find this presentation of the zastava space for $G_2$ for
degree $\alpha_1+\alpha_2$ in the literature. Therefore we include the proof in
the appendix~\ref{sec:app}.
\begin{NB}
  $\sfy_{1,0}^2 \sfy_{1,3} = \sfy_{1,1}^3$ can be deduced.
  But no other relations ?

  \cite[\S5.8]{bdf} does not deal with the presentation of the $G_2$
  type zastava.
\end{NB}%

Together, we obtain:
\begin{Lemma}
\label{lemma: zastava}
For $m=2$ (resp.~$m=3$), $\operatorname{Spec}H^{G_\cO}_*(\cR^+)$ is isomorphic to the zastava space $Z^{\alpha_1+\alpha_2}$ of type $B_2$ (resp.~type $G_2$).
\end{Lemma}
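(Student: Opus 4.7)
The plan is to compare the generators-and-relations presentation of $H^{G_\cO}_*(\cR^+)$ computed in the paragraphs just preceding the lemma with a presentation of the coordinate ring $\CC[Z^{\alpha_1+\alpha_2}]$, and then to bootstrap from \cref{lemma: open zastava} to conclude. The previous lemma already identifies $\cM_C = \operatorname{Spec} H^{G_\cO}_*(\cR)$ with the open zastava $\oZ^{\alpha_1+\alpha_2}$; what remains is to match the subalgebra $H^{G_\cO}_*(\cR^+) \subset H^{G_\cO}_*(\cR)$ with the coordinate ring of the closed zastava $Z^{\alpha_1+\alpha_2}$ sitting inside its open part.

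First I would record the presentation of $H^{G_\cO}_*(\cR^+)$ extracted in the preceding paragraph: it is generated by $w_1, w_2, \sfy_{0,1}, \sfy_{1,0}, \sfy_{1,1}, \dots, \sfy_{1,m}$, subject to the mixing relations $\sfy_{1,b}\sfy_{0,1} = (w_1-w_2)\sfy_{1,b+1}$ for $0 \le b \le m-1$, together with the exchange relations $\sfy_{1,b_1}\sfy_{1,b_2}\cdots = \sfy_{1,b_1'}\sfy_{1,b_2'}\cdots$ whenever $\sum b_i = \sum b_i'$ with $0 \le b_i, b_i' \le m$. Second, in the case $m=2$, the only nontrivial exchange relation is $\sfy_{1,0}\sfy_{1,2} = \sfy_{1,1}^2$; under the substitution $w_1 = -A_2$, $w_2 = -A_1$, $\sfy_{0,1} = b_{01}$, $\sfy_{1,0} = b_{12}$, $\sfy_{1,1} = b_{02}$, $\sfy_{1,2} = b_{03}$ this matches verbatim the presentation of $\CC[Z^{\alpha_1+\alpha_2}]$ of type $B_2$ given in \cite[\S5.7]{bdf}.

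Third, for the case $m=3$, the additional exchange relations are $\sfy_{1,0}\sfy_{1,3} = \sfy_{1,1}\sfy_{1,2}$ and $\sfy_{1,1}\sfy_{1,3} = \sfy_{1,2}^2$. Since no such presentation of the $G_2$ type zastava appears in the literature, I invoke \cref{sec:app}, where exactly this presentation will be derived; the identification of generators then proceeds as in the $B_2$ case. To control completeness of the exchange relations, I would show (by a direct combinatorial argument on multisets in $\{0,\dots,m\}$) that any two factorizations of $\sfy_{a, b}$ into $\sfy_{1, b_i}$'s differ by a sequence of elementary exchanges of the listed form, so that the listed relations indeed generate all of them.

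Finally, to upgrade matching of presentations to a scheme-theoretic isomorphism, I would localize at $\sfy_{1,m}$ (which, by $\bz^*(\sfy_{1,m}) = \sfu_{1,m}$ is invertible in $H^{G_\cO}_*(\cR)$) and apply \cref{lemma: open zastava}: this identifies the localization with the coordinate ring of $\oZ^{\alpha_1+\alpha_2}$, and the subalgebra $H^{G_\cO}_*(\cR^+)$ is realized as the subring generated by the monopole classes $\sfy_{i, r}$ with non-negative coweight $(i,r)$, which is precisely the image of $\CC[Z^{\alpha_1+\alpha_2}]$ under the open inclusion. The main obstacle is the $m=3$ case: the genuine technical content lies in establishing the $G_2$ zastava presentation in the appendix, and in verifying that the two exchange relations listed above exhaust all relations between $\sfy_{1,0}, \sfy_{1,1}, \sfy_{1,2}, \sfy_{1,3}$; once this is in hand, the $m=2$ and $m=3$ arguments are structurally identical.
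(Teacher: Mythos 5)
Your proposal is correct and follows essentially the same route as the paper: one computes the generators $w_1,w_2,\sfy_{0,1},\sfy_{1,0},\dots,\sfy_{1,m}$ and relations of $H^{G_\cO}_*(\cR^+)$ and matches them with the presentation of $Z^{\alpha_1+\alpha_2}$ from \cite[\S5.7]{bdf} in type $B_2$ and from \cref{sec:app} in type $G_2$. Your extra step of localizing at $\sfy_{1,m}$ and invoking \cref{lemma: open zastava} to pin down injectivity is a harmless (and reasonable) supplement to the paper's direct comparison of presentations, not a different argument.
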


\begin{NB2}%
For $G_2$, I believe a complete set of relations of the latter type is 
$$
\sfy_{1,1}^2 = \sfy_{1,0} \sfy_{1,2}, \ \ \sfy_{1,1} \sfy_{1,2} = \sfy_{1,0}\sfy_{1,3}, \ \ \sfy_{1,2}^2 = \sfy_{1,1} \sfy_{1,3} 
$$
\end{NB2}%
\begin{NB}
    I take the following from {\bf NB}.
\end{NB}%
\begin{Remark}
For general $m$, a complete set of relations of the latter type are as follows: for all $1\leq a\leq b < m$,
$$
\sfy_{1,a} \sfy_{1,b} = \left\{ \begin{array}{cl} \sfy_{1,0}\sfy_{1, a+b}, & \text{if } a+b \leq m, \\  \sfy_{1, a+b-m}\sfy_{1, m}, & \text{if } a+b>m \end{array} \right.
$$
\end{Remark}
\begin{NB2}
Namely, we know the subalgebra generated by the elements $\sfy_{1, b}$ with $0\leq b \leq m$ has a basis $\sfy_{a,b}$ where $ma \geq b$.  If we write $b = m q + r$ with $0\leq r<m$, then 
$$
\sfy_{a,b} = \sfy_{1,m}^q \sfy_{1,0}^{a- q - \lceil\tfrac{r}{m} \rceil } \prod_{b=1}^{m-1} \sfy_{1,b}^{\delta_{r, b}}
$$
In particular at most one $\sfy_{1,b}$ with $1\leq b < m$ appears, with multiplicity $\leq 1$. The above relations let us reduce any product of the elements $\sfy_{1, b}$ with $0\leq b \leq m$ to this form, as each such relation reduces the total degree in the elements $\sfy_{1,b}$ with $1\leq b < m$ by one.
\end{NB2}%

\begin{NB}
    I added the following on Mar.~6.
\end{NB}%

\section{Slices}\label{sec:slice}

Consider an adjoint group $\mathscr{G}$ of  $BCFG$ type, with fundamental coweights $\{ \Lambda_i\}$ and simple coroots $\{\alpha_i\}$. Given a dominant coweight $\lambda$ for $\mathscr{G}$, and a coweight $\mu$ such that $\lambda \geq \mu$, we define the corresponding \emph{generalized affine Grassmannian slice} $\overline{\cW}{}^\lambda_\mu$ as in \cite[\S 2(ii)]{2016arXiv160403625B}. Recall that in the case when $\mu$ is itself dominant, $\overline{\cW}{}^\lambda_\mu$ is isomorphic to an ordinary affine Grassmannian slice in $\Gr_{\mathscr{G}}$ as defined in \cite[\S2]{bf14}, \cite[\S 2B]{kwy}.

The proofs of properties of $\overline{\cW}{}^\lambda_\mu$, given in \cite[\S 2]{2016arXiv160403625B}, work for non simply-laced types.  In particular, $\overline{\cW}{}^\lambda_\mu$ is Cohen-Macaulay, normal, and affine.   It has an integrable system $\overline{\cW}{}^\lambda_\mu \rightarrow \mathbb{A}^\alpha$ where $\alpha = \lambda - \mu$, which satisfies factorization as in \cite[\S 2(ix)]{2016arXiv160403625B}.

Thanks to the analysis in the previous section, we can apply the argument in \cite[\S3]{2016arXiv160403625B} to symmetrizable cases:

\begin{Theorem}\label{thm:slice}
    Suppose that the valued quiver is of type $BCFG$, with adjoint group ${\mathscr{G}}$ as above. Then
    
    \textup{(1)} Suppose $W=0$. Then $\cM_C = \Spec H^{G_\cO}_*(\cR)$
    is isomorphic to the open zastava space $\oZ^\alpha$ for ${\mathscr{G}}$ of degree $\alpha = \sum_i \dim V_i\cdot\alpha_i$.

    \textup{(2)} Suppose $W\neq 0$. Then
    $\cM_C = \Spec H^{G_\cO}_*(\cR)$ is isomorphic to the generalized slice
    $\overline{\cW}{}^\lambda_\mu$ for ${\mathscr{G}}$ where
    $\lambda$, $\mu$ are given by
    $\lambda = \sum_i \dim W_i\cdot\Lambda_i$,
    $\mu = \lambda - \sum_i \dim V_i\cdot\alpha_i$.

    \textup{(3)} $\Spec H^{G_\cO}_*(\cR^+)$ is isomorphic to the
    zastava space $Z^\alpha$ for ${\mathscr{G}}$ of degree 
    $\alpha = \sum_i \dim V_i\cdot \alpha_i$.
\end{Theorem}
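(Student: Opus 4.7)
The plan is to follow the strategy of \cite[\S3]{2016arXiv160403625B} essentially verbatim, with the key new ingredients being the rank $2$ computations of type $B_2$ and $G_2$ already carried out in \cref{sec:example} (\cref{lemma: open zastava} and \cref{lemma: zastava}). The common thread is the étale rational coordinate system $(y_{i,r}, w_{i,r})_{i\in I,\, 1 \leq r \leq \bv_i}$ on the open zastava $\oZ^\alpha$ in finite type \cite{fkmm,bdf}, which the argument realizes on the Coulomb branch side via the classes $(\sfy_{i,r}, w_{i,r})$, where $w_{i,r}$ are the Chern roots of $G$ (image of $H^*_G(\mathrm{pt})$) and $\sfy_{i,r}$ are fundamental classes of fibers of $\cR \to \Gr_G$ over the appropriate minuscule points, as in \cite[\S3(iii)]{2016arXiv160403625B}. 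On both sides one has a factorization structure, compatible with the coordinates, which reduces the verification of the defining relations to an open set supported on rank $\leq 2$ Levi subdiagrams.

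For part (1), the rank $1$ case is independent of the symmetrizer since the quiver has no edges there, and coincides with the untwisted $A_1$ Coulomb branch computation in \cite{2016arXiv160403625B}. The rank $2$ cases that can arise in types $BCFG$ are $A_1\times A_1$, $A_2$, $B_2 = C_2$ and $G_2$. The first two are again as in \cite{2016arXiv160403625B}, while $B_2$ and $G_2$ are furnished by \cref{lemma: open zastava}. Moreover, the identifications there were checked to be compatible with the factorization and folding of the coordinate systems, so one obtains a well-defined algebra map $\cO(\oZ^\alpha) \to H^{G_\cO}_*(\cR)$ globally; normality of the Coulomb branch (established in \cref{sec:definition}) together with the identification on an open dense subset yields the isomorphism, exactly as in \cite[\S3(v)]{2016arXiv160403625B}.

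For part (2), the additional framing $W$ is handled as in \cite[\S3(vii)--(viii)]{2016arXiv160403625B}: the homomorphisms $s_i\colon W_i\otimes\shfO_{D_i}\to \cE_i$ contribute precisely the scattering data that cuts $\overline{\cW}{}^\lambda_\mu$ out of the relevant ambient space, so the coordinate identification of part (1) upgrades to an isomorphism with the generalized slice for $\lambda = \sum \dim W_i \cdot \Lambda_i$, $\mu = \lambda - \sum\dim V_i\cdot\alpha_i$. Part (3) follows from part (1) upon restricting to $\cR^+$, the preimage of $\Gr_G^+$: the induced map to $\oZ^\alpha$ has image exactly $Z^\alpha$, since on the Coulomb side the classes $\sfy_{i,r}$ need not be inverted, matching the coordinate presentation of $Z^\alpha$ (cf.\ the explicit presentations verified in \cref{lemma: zastava} and \cref{sec:app}). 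The step most in need of care is the compatibility of the factorization reduction with the symmetrizer data, i.e.\ ensuring that the étale-local identifications coming from rank $2$ Levi subdiagrams glue consistently — but this is precisely what the folding-compatibility of the coordinate systems established in \cref{sec:example} provides.
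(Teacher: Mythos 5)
Your proposal follows essentially the same route as the paper's proof: reduce to the argument of \cite[\S3]{2016arXiv160403625B} (Theorems 3.1, 3.10 and Remark 3.15), with the only genuinely new input being the rank-2 comparisons of \cref{lemma: open zastava} and \cref{lemma: zastava} at the diagonal divisors, where $B_2$ and $G_2$ open zastavas can now appear. The one point the paper makes explicit that you gloss over is the appeal to \cite[Thm.~5.26]{2016arXiv160103586B} (which must be noted to generalize to the symmetrizer setting): agreement on a dense open set plus normality alone would not suffice, and it is the identification in codimension 1 — which your diagonal-divisor matching does in fact supply — that closes the argument.
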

\begin{proof}
Since the proofs are essentially the same as in \cite[\S3]{2016arXiv160403625B}, we simply indicate the differences.  In both parts we wish to appeal to \cite[Thm.~5.26]{2016arXiv160103586B}: in a certain precise sense, it suffices to identify the varieties in codimension 1.  This result generalizes to our present setting, with the same proof.

For (1),(2) we follow the proof of \cite[Thm.~3.1, 3.10]{2016arXiv160403625B}. Using the same notation, the only difference comes when comparing the varieties in the case when $t$ lies on a diagonal divisor $(w_{i,r} - w_{j,s})(t) = 0$ where $i\neq j$.  In our present BCFG setting, we may meet factors of open zastava's $\oZ^{\alpha_1+\alpha_2}$ of type $B_2, G_2$ in addition to the usual $A_1 \times A_1$ and $A_2$ types already discussed in \cite[Rem.~2.2]{2016arXiv160403625B}.  In these new cases we apply Lemma \ref{lemma: open zastava} to complete the proof.

For (3) we follow  \cite[Remark~3.15]{2016arXiv160403625B}, this time making use of Lemma \ref{lemma: zastava}.
\end{proof}

\begin{Remark}
Part (2) extends to relate the \emph{flavor symmetry deformation} of $\cM_C$ with a BD slice, generalizing \cite[Thm.~3.20]{2016arXiv160403625B}. The same applies for \cite[Thm.~5.5]{2018arXiv180511826B}.
\end{Remark}

\begin{NB}
  Zastava ? We do not have a presentation of zastava for $G_2$. This
  could be a problem.... I add (2).
\end{NB}%

\begin{NB}
    I added (2) and the above sentence on Mar.~6.
\end{NB}%

\section{Quantization}\label{sec:quant}

In this section, we connect the deformed algebra $H^{G_\cO \rtimes \CC^\times}_*(\cR)$ with truncated shifted Yangians in type BCFG, extending the results of \cite[Appendix B]{2016arXiv160403625B}.

\subsection{Loop rotation}
\label{section: loop rotation}

To discuss the deformation $H^{G_\cO\rtimes \CC^\times}_*(\cR)$, we must first make a choice of $\CC^\times$--action on $\cR$.  
\begin{NB}
  Earlier wrong version:

  Let $r^\vee$ be the least common multiple of $d_i$ ($i\in I$). (It is
equal to the lacing number for $BCFG$.) We define a
$\CC^\times$-action on $\CC[[z_i]]$ by
\begin{equation*}
   z_i \mapsto z_i \tau^{r^\vee/d_i} \qquad (\tau\in\CC^\times).
\end{equation*}
Then the equation $z_i^{f_{ji}} = z_j^{f_{ij}}$ is preserved, as
$d_i f_{ij} = d_j f_{ji}$. Therefore we have an induced
$\CC^\times$--action on $\cR$.
\end{NB}%
\begin{NB2}
I have followed these new conventions below.
\end{NB2}
This action will depend on a choice of symmetrizers $(d_i)\in \ZZ_{>0}^I$ for our Cartan matrix $(c_{ij})_{i,j\in I}$.  We define a $\CC^\times$--action on $\CC[[z_i]]$ by
\begin{equation*}
   z_i \mapsto z_i \tau^{d_i} \qquad (\tau\in\CC^\times).
\end{equation*}
Then the equation $z_i^{f_{ij}} = z_j^{f_{ji}}$ is preserved, as
$d_i f_{ij} = d_j f_{ji}$. Therefore we have an induced
$\CC^\times$--action on $\cR$.

\begin{NB2}
June 27: Added (perhaps too obvious) comment above that here the choice of $d_i$ matters.
\end{NB2}

\subsection{Embedding into the ring of difference operators}
\label{sec: embedding into the ring of difference operators}

Consider a valued quiver along with vector spaces $V = \bigoplus V_i$ and $W = \bigoplus W_i$ as above.  Consider the deformed algebra
$$
\cA_\hbar := H_\ast^{\widetilde{G}_\cO \rtimes \CC^\times} (\cR),
$$
where $\widetilde{G} = G \times T(W)$ with $T(W) \subset \prod_i \GL(W_i)$ the standard maximal torus, and where the $\CC^\times$--action on $\widetilde{G}_\cO$ and $\cR$ is induced by its action on $\CC[[z_i]]$ as in the previous section.  We choose a basis $\equi_1,\ldots,\equi_N$ of the character lattice of $T(W)$ compatible with the product decomposition $T(W) = \prod_i T(W_i)$.  Thus $\cA_\hbar$ is naturally an algebra over
$$
H_{T(W)\times \CC^\times}^\ast(\mathrm{pt}) = \CC[\hbar,\equi_1,\ldots,\equi_N],
$$
which is a central subalgebra (see \cite[Section 3(viii)]{2016arXiv160103586B}).

As in \cite[Appendix A(i)--A(ii)]{2016arXiv160403625B}, we can construct an embedding 
$$
\bz^*(\iota_*)^{-1}\colon \cA_\hbar \hookrightarrow \widetilde{\cA}_\hbar 
$$
where we define an algebra
$$
\widetilde{\cA}_\hbar:= \CC[\hbar, \equi_1,\ldots, \equi_N]\big\langle w_{i,r}, \sfu_{i,r}^{\pm 1}, \hbar^{-1}, (w_{i,r}-w_{i,s} + m d_i \hbar)^{-1} : i \in Q_0, 1\leq r\neq s \leq \bv_i, m \in \ZZ\big\rangle
$$
by the relations  $[\sfu_{i,r}^{\pm 1}, w_{j, s} ] = \pm \delta_{i,j} \delta_{r, s} \hbar d_i \sfu_{i,r}^{\pm 1 }$ (all other elements commute).  Note that $\widetilde{\cA}_{\hbar}$ is a localization of $H_\ast^{T \times T(W) \times \CC^\times}(\Gr_T)$.

For the homology classes of $\cR$ associated to preimages $\cR_\lambda$ of closed $G_\cO$--orbits, we can explicitly write down the image under the map $\bz^{-1}(\iota_\ast)^{-1}$, following \cite[Proposition A.2]{2016arXiv160403625B}.  Let $\lambda$ be a miniscule dominant coweight, $W_\lambda \subset W$ its stabilizer, and $f \in \CC[\mathfrak{t}]^{W_{\lambda}}$.  Then 
$$
\bz^\ast (\iota_\ast)^{-1} f [ \cR_\lambda] = \sum_{\lambda' = w \lambda \in W \lambda} \frac{wf \times e_{\lambda'}}{e( T_{\lambda'} \Gr_G^{\lambda})} \sfu_{\lambda'}
$$
where $e_{\lambda'}$ denotes the Euler class of the fiber of $\cT$ over $\lambda'$ modulo the fiber of $\cR$ over $\lambda'$. 

Following \cite[Section A(ii)]{2016arXiv160403625B}, we will compute these classes for the cocharacters $\varpi_{i,n}$ and $\varpi_{i,n}^\ast$ of $GL(V_i)$. We find

\begin{NB2}
I have omitted edge loops below.  Should we include them?

I think that it is fine, but we should mention that we omit edge loops
for simplicity.
\end{NB2}%
\begin{equation}
\label{eq: monopole calculation 1}
\bz^\ast (\iota_\ast)^{-1} f(\uQ_i) [ \cR_{\varpi_{i,n}} ]  =
\end{equation}
$$
 \sum_{\substack{ I  \subset \{1,\ldots,\bv_i\}\\  \# I = n}} f(w_{i, I} ) \frac{\displaystyle\prod_{\substack{ h \in Q_1: o(h) = i\\ r\in I}} \prod_{s=1}^{\bv_{i(h)}}\prod_{p=0}^{f_{i(h),i}-1}\big(- w_{i,r} +w_{i(h), s}  + (-d_i f_{i, i(h)} + p d_{i(h)}\big) \hbar)^{g_{i, i(h)} } }{\displaystyle\prod_{r\in I, s\notin I} (w_{i,r} - w_{i,s})} \prod_{r\in I} \sfu_{i,r} 
$$
\begin{NB2}
Using that 
$$
\Hom_{\CC[[z]]} ( z_i \CC[[z_i]], \CC[[z_j]]) / \Hom_{\CC[[z]]}( \CC[[z_i]], \CC[[z_j]] )
$$
has basis over $\CC$ given by the (classes of the) $\CC[[z]]$--linear maps defined by $z_i^{d_i f_{ij}} \mapsto z_j^p$, for $0\leq p < f_{ji}-1$, which have degrees $-d_i f_{ij} + p d_j$.
\end{NB2}%
and
{\allowdisplaybreaks
\begin{gather}
\label{eq: monopole calculation 2}
\bz^\ast(\iota_\ast)^{-1} f(\uS_i) [ \cR_{\varpi_{i,n}^\ast}] =
\\
\begin{aligned}[t]
  & \sum_{\substack{I \subset \{1,\ldots,\bv_i\} \\ \# I = n}} f(w_{i,I} - d_i \hbar) \prod_{\substack{r\in I \\ k : i_k = i}} (w_{i,r} - \equi_k - d_i \hbar)
  \\
  & \qquad
  \times\frac{\displaystyle\prod_{\substack{h \in Q_1 : i(h) = i \\ r\in I }} \prod_{s=1}^{\bv_{o(h)}} \prod_{p=0}^{f_{o(h),i}-1}\big( w_{i,r} - w_{o(h), s}- (d_i+p d_{o(h)}) \hbar\big)^{g_{i, o(h)} }}{\displaystyle\prod_{r\in I, s\notin I} (-w_{i,r} + w_{i,s})} \prod_{r\in I} \sfu_{i,r}^{-1}. \notag
\end{aligned}
  \end{gather}
}
\begin{NB2}
Using that
$$
\Hom_{\CC[[z]]} ( \CC[[z_j]], z_i^{-1} \CC[[z_i]]) / \Hom_{\CC[[z]]}( \CC[[z_j]], \CC[[z_i]] )
$$
has basis over $\CC$ of the maps defined by $z_j^p \mapsto z_i^{-1}$, for $0\leq p< f_{ji}-1$, which have degrees $-d_i - p d_j$.
\end{NB2}%
\begin{NB}
  It seems that the summation should run from $p=0$, not $p=1$.
\end{NB}%

\begin{NB2}
You are right; I've changed the summation to run from $p=0$..
\end{NB2}

\subsection{Shifted Yangians}  
The definition of shifted Yangians given in \cite[Definition B.2]{2016arXiv160403625B} extends naturally to all finite types. Thus in BCFG types, for any coweight $\mu$ there is a corresponding shifted Yangian $Y_\mu$. It is a $\CC$--algebra, with generators $E_i^{(q)}, F_i^{(q)}, H_i^{(p)}$ for $i \in Q_0$, $q>0$ and $p>-\langle \alpha_i^\vee, \mu \rangle$. Here $\alpha_i^\vee$ denotes the simple {\em root} for $i\in I$. \begin{NB2} Include relations? \end{NB2}  \begin{NB2} July 5: Corrected notation from previous $p> - \mu_i$ \end{NB2}

The properties of $Y_\mu$ established in \cite{fkprw} have straightforward extensions to all finite types.  In particular, $Y_\mu$ has a PBW basis, and for any coweights $\mu_1,\mu_2$ with $\mu = \mu_1+\mu_2$ there is a filtration $F_{\mu_1,\mu_2}^\bullet Y_\mu$ of $Y_\mu$.  The associated graded $\operatorname{gr}^{F_{\mu_1,\mu_2}} Y_\mu$ is commutative, and the Rees algebras $\operatorname{Rees}^{F_{\mu_1,\mu_2}} Y_\mu$ are all canonically isomorphic as algebras (although not as graded algebras).  For the purposes of this paper, we will choose $\mu_1,\mu_2$ as follows: 
$$
\langle \mu_1, \alpha_i^\vee \rangle = \langle \lambda, \alpha_i^\vee\rangle - \bv_i + \sum_{h: i(h) = i} \bv_{o(h)} c_{o(h), i}, \quad \langle \mu_2, \alpha_i^\vee \rangle = -\bv_i + \sum_{h: o(h) = i} \bv_{i(h)} c_{i(h), i}
$$
We write $\mathbf{Y}_\mu := \operatorname{Rees}^{F_{\mu_1,\mu_2}} Y_\mu$ for the corresponding Rees algebra, which we view as a graded algebra over $\CC[\hbar]$ with $\deg \hbar =1$.

Below, we work with the larger algebra $Y_\mu[\equi_1,\ldots,\equi_N] = Y_\mu \otimes_\CC \CC[\equi_1,\ldots, \equi_N]$, where $N = \sum_i \bw_i$.  The filtration $F_{\mu_1,\mu_2}$ extends to $Y_\mu[\equi_1,\ldots,\equi_N]$ by placing all $\equi_i$ in degree 1.  We denote the corresponding Rees algebra by $\mathbf{Y}_\mu[\equi_1,\ldots, \equi_N]$.

Denote
$$
\Equi_i(\formal) = \prod_{k: i_k = i} (\formal - \equi_k - d_i \hbar),
$$
and define elements $A_i^{(s)} \in Y_\mu[\equi_1,\ldots, \equi_N]$ for $s>0$ according to 
\begin{equation}
H_i(\formal) = \Equi_i(\formal) \frac{\prod_{j\neq i} \prod_{p = 1}^{-c_{ji}} \big(\formal - \tfrac{1}{2}d_i c_{ij} - p d_j \big)^{\bv_j}}{\formal^{\bv_i} (\formal-d_i)^{\bv_i}} \frac{ \prod_{j\neq i} \prod_{p=1}^{-c_{ji}} A_j(\formal-\tfrac{1}{2}d_i c_{ij}-pd_j )}{A_i(\formal) A_i(\formal-d_i ) }
\end{equation}
where
$$
H_i(\formal) =  \formal^{\mu_i} + \sum_{r> - \mu_i} H_i^{(r)} \formal^{-r}, \quad A_i(\formal) = 1 + \sum_{s >0} A_i^{(s)} \formal^{-p}
$$
\begin{NB2} Alternatively, we may modify the shifts that appear here, c.f.~discussion around Theorem \ref{Theorem: shifted Yangian to Coulomb branch}.  In particular, we can replace
$$
A_i(z) \mapsto \left( \frac{z+\sigma_i\hbar}{z} \right)^{\bv_i} A_i(z+\sigma_i\hbar)
$$
for any shifts $\sigma_i \in \CC$.  Note that the ideal generated by the elements $A_i^{(r)}$ with $r>m_i$ remains the same.
\end{NB2}

\subsection{A representation using difference operators}
Recall the $\CC[\hbar,\equi_1,\ldots,\equi_N]$--algebra $\widetilde{\cA}_\hbar$ defined in \cref{sec: embedding into the ring of difference operators}.
This algebra has a grading, defined by $\deg \hbar = \deg \equi_k = \deg w_{i,r} =1$ and $\deg \sfu_{i,r}^{\pm 1} = 0$.   Denote
$$
W_i(\formal) = \prod_{s=1}^{\bv_i} (\formal- w_{i,s}), \qquad  W_{i,r}(\formal) = \prod_{\substack{s=1 \\ s\neq r}}^{\bv_i} (\formal - w_{i,s})
$$

The following result is a common generalization of \cite[Corollary B.17]{2016arXiv160403625B} and \cite[Theorem 4.5]{kwy}, which were in turn generalizations of work of Gerasimov-Kharchev-Lebedev-Oblezin \cite{GKLO}.
\begin{Theorem}
\label{thm: GKLO}
There is a homomorphism of graded $\CC[\hbar,\equi_1,\ldots,\equi_N]$--algebras 
\begin{equation*}
\Phi_\mu^\lambda : \mathbf{Y}_\mu[\equi_1,\ldots,\equi_N] \longrightarrow \widetilde{\cA}_\hbar,
\end{equation*}
defined by
\begin{align*}
A_i(\formal) & \mapsto \formal^{-\bv_i} W_i(\formal), \\
E_i(\formal) & \mapsto - d_i^{1/2} \sum_{r=1}^{\bv_i} \Equi_i(w_{i,r}) \frac{\displaystyle \prod_{ h\in Q_1 : i(h) = i} \prod_{p=1}^{-c_{o(h), i}} W_{o(h)} \big( w_{i,r} - (\tfrac{1}{2}d_i c_{i, o(h)} + p d_{o(h)} ) \hbar\big) }{(\formal-w_{i,r}) W_{i,r}(w_{i,r})} \sfu_{i,r}^{-1}, \\
F_i(\formal) & \mapsto d_i^{1/2} \sum_{r=1}^{\bv_i}  \frac{\displaystyle \prod_{h\in Q_1: o(h) = i} \prod_{p=1}^{-c_{i(h), i}} W_{i(h)}\big( w_{i,r} - (\tfrac{1}{2} d_i c_{i, i(h)} - d_i + p d_{i(h)} ) \hbar \big) }{(\formal-w_{i,r} -d_i \hbar) W_{i,r}(w_{i,r})} \sfu_{i,r}
\end{align*}
\end{Theorem}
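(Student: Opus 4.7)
The plan is to verify that the explicit formulas in the statement define a homomorphism by checking each defining relation of the shifted Yangian $Y_\mu$ (extended by the central polynomials $\CC[\equi_1,\ldots,\equi_N]$), adapting the strategy of Gerasimov-Kharchev-Lebedev-Oblezin \cite{GKLO} to the shifted, non-simply-laced setting, following \cite[Thm.~4.5]{kwy} and \cite[Cor.~B.17]{2016arXiv160403625B}. Since the elements $A_i(z)$ are mapped to polynomials in the commuting variables $w_{i,r}$, relations involving only the $H_i(z)$ (equivalently, only the $A_i(z)$) hold trivially.

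The bulk of the verification concerns the mixed relations: $[H_i(z), E_j(w)]$, $[H_i(z), F_j(w)]$, the $[E_i(z), F_j(w)]$ relation with its characteristic $\delta_{ij}$ structure, and the relations $[E_i(z), E_j(w)]$, $[F_i(z), F_j(w)]$ for $i \neq j$. Each involves at most two vertices and reduces to a residue/pole analysis of the rational difference operators in $\widetilde{\cA}_\hbar$. The symmetric-type part of the argument runs exactly as in \cite[App.~B]{2016arXiv160403625B}; the new feature is the product $\prod_{p=1}^{-c_{o(h),i}} W_{o(h)}\bigl(w_{i,r} - (\tfrac{1}{2}d_i c_{i,o(h)} + p d_{o(h)})\hbar\bigr)$ together with the $g_{ij}$-fold contributions along valued edges. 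The key compatibility is the symmetrizer identity $d_i f_{ij} = d_j f_{ji}$: it ensures that the various $\hbar$-shifts match the combinatorics of the Cartan matrix, and it is precisely what makes the formulas \eqref{eq: monopole calculation 1} and \eqref{eq: monopole calculation 2} for $[\cR_{\varpi_{i,n}}]$ and $[\cR_{\varpi_{i,n}^*}]$ agree (up to normalizing scalars $d_i^{1/2}$) with the proposed images of $E_i(z)$ and $F_i(z)$. This identification already implies that the image of $\Phi_\mu^\lambda$ lies in the subalgebra generated by the Coulomb branch algebra $\cA_\hbar$ together with the localization $(w_{i,r} - w_{i,s} + m d_i \hbar)^{-1}$.

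For the Serre relations I would reduce to the rank-$2$ subalgebra generated by $i,j$ with $c_{ij} < 0$. The simply-laced cases $A_1 \times A_1$ and $A_2$ are handled in the cited references. For the new cases $B_2, C_2, G_2$, I would appeal to \cref{lemma: open zastava} to identify the relevant Coulomb branches with open zastava spaces $\oZ^{\alpha_1+\alpha_2}$, whose coordinate rings are given explicitly in \cite[\S5.7]{bdf} and \cref{sec:app}. The Serre relations may then be verified in the associated graded $\operatorname{gr}^{F_{\mu_1,\mu_2}} Y_\mu$, which is commutative, by working in $\CC[\oZ^{\alpha_1+\alpha_2}]$ after setting $\hbar = 0$; the quantum Serre relation is then recovered via the PBW-type filtration argument of \cite{fkprw}, whose extension to BCFG types is routine since $\mathbf{Y}_\mu$ has the usual Rees algebra structure.

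The main obstacle I anticipate is the $G_2$ Serre relation: the triple shifts $p d_j \hbar$ for $p = 0, 1, 2$ produce lengthy expressions whose cancellation is delicate. Here the explicit presentation of the $G_2$ zastava coordinate ring from \cref{sec:app}, together with the compatibility of folding with the zastava coordinate system verified in \cref{sec:example}, should reduce the verification to a finite classical identity in $\CC[\oZ^{\alpha_1+\alpha_2}]$, which in turn follows by specializing the known $D_4$-type Serre relation to the $\ZZ/3$-fixed locus.
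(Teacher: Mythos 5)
Your overall strategy (verify the shifted-Yangian relations directly for the explicit difference operators, in the spirit of \cite{GKLO}, \cite[Thm.~4.5]{kwy} and \cite[App.~B]{2016arXiv160403625B}) is the standard one and is fine for the $H$--$E$, $H$--$F$ and $E$--$F$ relations, which indeed reduce to rational-function identities where the symmetrizer identity $d_if_{ij}=d_jf_{ji}$ enters. For the record, the paper itself does not carry out this verification: it observes that the simply-laced case is \cite[\S B(iii)--B(vii)]{2016arXiv160403625B} and that the general finite-type statement follows from the analogous result for shifted quantum affine algebras proved in \cite{ft}, and omits the proof. (A side remark: the exact matching of $\Phi^\lambda_\mu(E_i^{(r)}),\Phi^\lambda_\mu(F_i^{(r)})$ with the classes \eqref{eq: monopole calculation 1}, \eqref{eq: monopole calculation 2} holds only after the shift automorphism $\sigma$ of \cref{Theorem: shifted Yangian to Coulomb branch}; it is not needed for \cref{thm: GKLO} itself.)

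The genuine gap is in your treatment of the Serre relations. First, verifying the Serre relation ``in $\operatorname{gr}^{F_{\mu_1,\mu_2}}Y_\mu$, after setting $\hbar=0$'' does not establish it: what must be shown is that the Serre relator, evaluated on the proposed images, vanishes identically in the noncommutative algebra $\widetilde{\cA}_\hbar$; checking its classical shadow only shows it lies in $\hbar\,\widetilde{\cA}_\hbar$, and there is no ``PBW-type filtration argument'' that upgrades this --- the Rees/PBW results of \cite{fkprw} concern the source $\mathbf{Y}_\mu$, not the verification of relations in a target. Second, even at the classical level your proposed reduction does not cover what is needed: \cref{lemma: open zastava} and the $G_2$ presentation of \cref{sec:app} concern only $\bv_1=\bv_2=1$ (degree $\alpha_1+\alpha_2$), whereas the Serre relation must hold for the GKLO operators with arbitrary $\bv_i,\bv_j$; and ``specializing the $D_4$ Serre relation to the $\ZZ/3$-fixed locus'' presupposes a folding compatibility for the \emph{quantized} algebras that is established nowhere in the paper (the folding comparison in \cref{sec:example} is a classical statement in one example). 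To close the argument you must either perform the direct rational-function verification of the quantum Serre relations in $\widetilde{\cA}_\hbar$ (as in \cite[\S B(vii)]{2016arXiv160403625B}, now with the extra shifts $pd_{o(h)}\hbar$, $p=1,\dots,-c_{o(h),i}$), or do as the paper does and invoke \cite{ft}.
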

In simply-laced type, a proof of this theorem was given in \cite[\S B(iii)--B(vii)]{2016arXiv160403625B}.  In all finite types, a geneneralization of this theorem for shifted quantum affine algebras was proven in \cite{ft}.  We thus omit the proof.

\subsection{Relation to the quantized Coulomb branch}
\label{subsection: Relation to the quantized Coulomb branch}

Consider the setup of \cref{sec: embedding into the ring of difference operators}, restricted to BCFG type.  Recall that in this case $g_{ij} =1$ and thus $f_{ij} = |c_{ij}|$, whenever $c_{ij}<0$.  With this in mind, we see that the right-hand sides of equations (\ref{eq: monopole calculation 1}), (\ref{eq: monopole calculation 2}) for $n=1$ are nearly identical to the images $\Phi_\mu^\lambda(F_i^{(r)}), \Phi_\mu^\lambda(E_i^{(r)})$ from the previous theorem, modulo shifts by $\hbar$ in their respective numerators.

Choose $\sigma_i\in \ZZ$ for each $i\in Q_0$, which solve the following system of equations: for each $h\in Q_1$, we require that
\begin{equation}
\label{eq: shifts sigma}
\tfrac{1}{2} d_{o(h)} c_{o(h), i(h)} = \sigma_{o(h)} - \sigma_{i(h)} - d_{o(h)} + d_{i(h)}
\end{equation}
Since $(Q_0,Q_1)$ is an orientation of a tree, a solution exists and is unique up to an overall additive shift.  However, in general these equations depend upon the choice of orientation of the Dynkin diagram.

\begin{Theorem}
\label{Theorem: shifted Yangian to Coulomb branch}
Fix integers $\sigma_i$ satisfying (\ref{eq: shifts sigma}). Then there is a unique graded $\CC[\hbar,\equi_1,\ldots, \equi_N]$--algebra homomorphism
$$
\overline{\Phi}_\mu^\lambda: \mathbf{Y}_\mu[\equi_1,\ldots,\equi_N] \longrightarrow \cA_\hbar
$$	
such that
\begin{align*}
A_i^{(r)} & \mapsto (-1)^p e_r\big( \{ w_{i,r} - \sigma_i \hbar \} \big), \\
E_i^{(r)} & \mapsto (-1)^{\bv_i} d_i^{-1/2} \big(c_1(\uS_i)+ (d_i -\sigma_i) \hbar\big)^{r-1} \cap [ \cR_{\varpi_{i,1}^\ast}], \\
F_i^{(r)} & \mapsto (-1)^{\sum_{h: o(h) = i} a_{i(h),i} \bv_{i(h)}} d_i^{-1/2} \big(c_1(\uQ_i) + (d_i-\sigma_i) \hbar\big)^{r-1} \cap [ \cR_{\varpi_{i,1}} ]
\end{align*}
\end{Theorem}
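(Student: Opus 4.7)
The strategy is to deduce $\overline{\Phi}_\mu^\lambda$ from the homomorphism $\Phi_\mu^\lambda$ of \cref{thm: GKLO} by composing with an appropriate automorphism of $\mathbf{Y}_\mu[\equi_1,\ldots,\equi_N]$ coming from a shift of the generators $A_i$, and then showing that the composite has image contained in the subalgebra $\cA_\hbar \subset \widetilde{\cA}_\hbar$ of \cref{sec: embedding into the ring of difference operators}.

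First, for each $i\in Q_0$ let $\sigma_i\in\ZZ$ satisfy \eqref{eq: shifts sigma}; such a solution exists since in $BCFG$ type the quiver underlies a tree. As noted in the remark following \cref{thm: GKLO}, the replacement $A_i(\formal) \mapsto \bigl(\tfrac{\formal+\sigma_i\hbar}{\formal}\bigr)^{\bv_i} A_i(\formal+\sigma_i\hbar)$ (together with compatible twists on $E_i(\formal), F_i(\formal)$) gives an automorphism of the Rees algebra $\mathbf{Y}_\mu[\equi_1,\ldots,\equi_N]$. Composing $\Phi_\mu^\lambda$ with this automorphism, the images of $A_i^{(r)}$ become $(-1)^r e_r\bigl(\{w_{i,r} - \sigma_i\hbar\}\bigr)$, matching the desired formula.

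Next I would verify that the images of $E_i^{(r)}$ and $F_i^{(r)}$ under the shifted GKLO map lie in the image of $\bz^*(\iota_*)^{-1}\colon \cA_\hbar \hookrightarrow \widetilde{\cA}_\hbar$ by directly comparing with the explicit classes \eqref{eq: monopole calculation 1}, \eqref{eq: monopole calculation 2} specialized to $n=1$. In $BCFG$ type we have $g_{ij}=1$ and $f_{ij} = |c_{ij}|$, so the number of factors and the outer shape of the products match automatically. The content of the comparison is therefore entirely in the matching of the $\hbar$-shifts appearing in the arguments of $\Equi_i$, $W_{o(h)}$, and $W_{i(h)}$. Shifting $w_{i,r} \mapsto w_{i,r} - \sigma_i\hbar$ globally, the relation \eqref{eq: shifts sigma} is what makes the internal shifts $\tfrac{1}{2}d_i c_{i, o(h)} + p d_{o(h)}$ coming from \cref{thm: GKLO} line up with the shifts $d_i + p d_{o(h)}$ appearing in \eqref{eq: monopole calculation 2}, across every edge $h$ of the quiver simultaneously. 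This is the main technical step, and it is the reason for the otherwise mysterious-looking constraint on the $\sigma_i$. The overall prefactors $(-1)^{\bv_i}$ and $(-1)^{\sum_{h:o(h)=i} a_{i(h),i}\bv_{i(h)}}$ and the twist of the Chern class argument from $\formal$ to $c_1(\uS_i)+(d_i-\sigma_i)\hbar$ (resp.\ $c_1(\uQ_i)+(d_i-\sigma_i)\hbar$) arise when one rewrites the rational $r$-expansion of $E_i(\formal)$ and $F_i(\formal)$ in terms of the monomials $[\cR_{\varpi_{i,1}^*}]$ and $[\cR_{\varpi_{i,1}}]$ capped with Chern classes of the tautological bundles on the relevant Grassmannian.

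Granting these computations, the shifted map $\Phi_\mu^\lambda$ factors as $\bz^*(\iota_*)^{-1} \circ \overline{\Phi}_\mu^\lambda$ for a unique homomorphism $\overline{\Phi}_\mu^\lambda$ into $\cA_\hbar$, because $\bz^*(\iota_*)^{-1}$ is injective. The three families $\{A_i^{(r)}\}, \{E_i^{(r)}\}, \{F_i^{(r)}\}$ together with $\CC[\hbar,\equi_1,\ldots,\equi_N]$ generate $\mathbf{Y}_\mu[\equi_1,\ldots,\equi_N]$, so the map is completely determined by the formulas on generators, giving uniqueness; the gradings visibly agree term by term once one checks that the Chern class exponent $r-1$ and the factors of $\hbar$ in the shifts contribute the correct total degree. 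The main obstacle, as indicated, is the shift-matching across edges; in general symmetrizable (non-tree) type \eqref{eq: shifts sigma} may be overdetermined, which is precisely why the statement is restricted to $BCFG$ in the present setting.
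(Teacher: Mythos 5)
Your overall strategy is the same as the paper's: match the GKLO formulas of \cref{thm: GKLO} against the explicit classes \eqref{eq: monopole calculation 1}, \eqref{eq: monopole calculation 2} for $n=1$, observe that the condition \eqref{eq: shifts sigma} is exactly what makes the $\hbar$-shifts in the numerators agree edge by edge, and then factor through the embedding $\bz^*(\iota_*)^{-1}$. (Whether one implements the shift as an automorphism of $\mathbf{Y}_\mu[\equi_1,\ldots,\equi_N]$, as you do, or as the automorphism $\sigma$ of $\widetilde{\cA}_\hbar$ sending $w_{i,r}\mapsto w_{i,r}+\sigma_i\hbar$, $\equi_k\mapsto \equi_k+\sigma_{i_k}\hbar$, as the paper does, is a cosmetic difference --- though note that on your side you must also twist the parameters $\equi_k$, since $\Equi_i(w_{i,r})$ appears in the image of $E_i(\formal)$.)

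The genuine gap is in your last step. You conclude that the whole image of the (shifted) GKLO map lands in $\bz^*(\iota_*)^{-1}(\cA_\hbar)$ because ``the three families $\{A_i^{(r)}\},\{E_i^{(r)}\},\{F_i^{(r)}\}$ together with $\CC[\hbar,\equi_1,\ldots,\equi_N]$ generate $\mathbf{Y}_\mu[\equi_1,\ldots,\equi_N]$.'' They do \emph{not} generate it as an associative algebra: in the Rees algebra $\mathbf{Y}_\mu=\operatorname{Rees}^{F_{\mu_1,\mu_2}}Y_\mu$, the correctly placed root vectors for non-simple roots are of the form $\hbar^{-1}[\,\cdot\,,\,\cdot\,]$ of the listed generators (commutators drop filtration degree because $\operatorname{gr}Y_\mu$ is commutative), so producing them requires dividing commutators by $\hbar$. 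The correct statement, and the one the paper uses, is that these elements generate $\mathbf{Y}_\mu[\equi_1,\ldots,\equi_N]$ as a \emph{Poisson} algebra under $\{a,b\}=\tfrac1\hbar(ab-ba)$. To close the argument one must then add the observation that $\cA_\hbar$ is almost commutative and $\hbar$-torsion free, so its image inside $\widetilde{\cA}_\hbar$ is closed under $\tfrac1\hbar[\,\cdot\,,\,\cdot\,]$; only then does containment of the generators' images imply containment of the full image, after which injectivity of the embedding gives existence (and uniqueness) of $\overline{\Phi}_\mu^\lambda$. Without this Poisson-generation plus almost-commutativity step, checking the generators alone does not suffice.
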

\begin{NB2}
This map is actually degree-doubling
\end{NB2}
\begin{NB2}
Dec 30: Added factors of $d_i^{-1/2}$
\end{NB2}

\begin{Remark}
The integers $\sigma_i$ play the role of a ``shift'' in the action of the loop rotation from \cite[Section 2(i)]{2016arXiv160103586B}, where the loop $\CC^\times$ also acts on $\bN$ by weight $1/2$.  Indeed, in our present setting we could modify the loop action of $\CC^\times$ from \cref{section: loop rotation}, so that it also scales $V_i, W_i$ with weight $\sigma_i$. (Thus when acting on $\cR$, in addition to rotating the discs $D_i$, $\tau \in \CC^\times$ scales the morphism $s_{ij}$ by $\tau^{\sigma_i - \sigma_j}$, and scales $s_i$ by 1). With this modified action, no shifts by $\sigma_i$ would be needed in the statement of the theorem.  Note that since this modified $\CC^\times$--action factors through the usual action of $G\times \CC^\times$, the modified algebra is isomorphic to the original (c.f.~\cite[Remark 3.24(2)]{2016arXiv160103586B}).
\end{Remark}

\begin{proof}[Proof of Theorem \ref{Theorem: shifted Yangian to Coulomb branch}]
We may argue using the previous remark, and modify the loop $\CC^\times$--action while preserving the algebra $\cA_\hbar$ up to isomorphism.  We give an equivalent elementary argument:

Consider the automorphism $\sigma$ of $\widetilde{\cA}_\hbar$ defined by $w_{i,r} \mapsto w_{i,r} + \sigma_i \hbar$ and $\equi_k \mapsto \equi_k + \sigma_{i_k} \hbar$, while fixing the generators $\hbar, \sfu_{i,r}^{\pm 1}$. We claim that in $\widetilde{\cA}_\hbar$ we have equalities
$$
\Phi_\mu^\lambda( x ) = \sigma \circ \bz^\ast(\iota_\ast)^{-1} (y),
$$
where $x\in \{A_i^{(r)}, E_i^{(r)}, F_i^{(r)}\}$, and where $y\in \cA_\hbar$ is the claimed image $\overline{\Phi}_\mu^\lambda(x)$ from the statement of the theorem.  For the elements $x=A_i^{(r)}$ this is obvious.  For  $x=E_i^{(r)}$, we are reduced to verifying that the shifts by $\hbar$ that appear in the numerators of $\Phi_\mu^\lambda(E_i^{(r)})$ and (\ref{eq: monopole calculation 2}) agree.  This is equivalent to the equations (\ref{eq: shifts sigma}) for those $h\in Q_1$ with $i(h)=i$. The case $x=F_i^{(r)}$ is similar, and is equivalent to those equations where $o(h) = i$, proving the claim.

The elements $A_i^{(r)}, E_i^{(r)}, F_i^{(r)}$ generate $\mathbf{Y}_\mu[\equi_1,\ldots,\equi_N]$ as a Poisson algebra, under the Poisson bracket $\{a, b\} = \tfrac{1}{\hbar}(ab-ba)$.  Since $\cA_\hbar$ is almost commutative, it is closed under Poisson brackets.  It follows that there is a containment of graded $\CC[\hbar,\equi_1,\ldots,\equi_N]$--algebras
$$
\Phi_\mu^\lambda(\mathbf{Y}_\mu[\equi_1,\ldots,\equi_N]) \subseteq \sigma \bz^\ast(\iota_\ast)^{-1}(\cA_\hbar)
$$
Since $\sigma\bz^\ast(\iota_\ast)^{-1}: \cA_\hbar \hookrightarrow \widetilde{\cA}_\hbar$ is an embedding, the homomorphism $\overline{\Phi}_\mu^\lambda$ exists as claimed.
\end{proof}

The image of $\overline{\Phi}_\mu^\lambda$ is called the truncated shifted Yangian, and is denoted by $\mathbf{Y}_\mu^\lambda$.

We now give a generalization of \cite[Corollary B.28]{2016arXiv160403625B} and \cite[Theorem A]{Weekes} to BCFG types:

\begin{Theorem}\label{thm:quant}	
For any $\lambda \geq \mu$ we have an isomorphism $\mathbf{Y}_\mu^\lambda = \cA_\hbar$, and in particular $\mathbf{Y}_\mu^\lambda / \hbar \mathbf{Y}_\mu^\lambda \cong \overline{\cW}{}^{\underline{\lambda}}_\mu$.
\end{Theorem}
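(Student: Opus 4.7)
Since $\mathbf{Y}_\mu^\lambda$ is by definition the image of the map $\overline{\Phi}_\mu^\lambda$ from \cref{Theorem: shifted Yangian to Coulomb branch}, the identity $\mathbf{Y}_\mu^\lambda = \cA_\hbar$ is equivalent to the surjectivity of $\overline{\Phi}_\mu^\lambda$. Granted this, the second claim follows by specializing $\hbar = 0$: we have $\cA_\hbar / \hbar \cA_\hbar \cong \cO(\cM_C)$ by general Coulomb branch theory, and $\cM_C \cong \overline{\cW}{}^{\underline{\lambda}}_\mu$ by \cref{thm:slice}. A small compatibility check is needed to ensure that $\overline{\Phi}_\mu^\lambda$ sends $\hbar$ to $\hbar$ (which is the case by construction), so that the mod $\hbar$ reduction of the surjection is exactly the classical identification.

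To prove surjectivity of $\overline{\Phi}_\mu^\lambda$, I would follow the strategy of \cite[Appendix B]{2016arXiv160403625B} and \cite{Weekes}. The explicit formulas of \cref{Theorem: shifted Yangian to Coulomb branch} show that the image of $\overline{\Phi}_\mu^\lambda$ already contains every equivariant parameter $w_{i,r}, \hbar, \equi_k$ (via the $A_i^{(r)}$ and the central subalgebra $\CC[\hbar, \equi_1, \dots, \equi_N]$), as well as the minuscule fundamental classes $[\cR_{\varpi_{i,1}}]$ (via $F_i^{(1)}$) and $[\cR_{\varpi_{i,1}^\ast}]$ (via $E_i^{(1)}$), up to nonzero scalar. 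Thus it would suffice to show that $\cA_\hbar$ is generated as a $\CC[\hbar,\equi_k,w_{i,r}]$--algebra by these minuscule classes for all $i\in I$.

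The generation statement would be proved by reducing to rank $2$, as in \cite{Weekes}. For each edge of the valued quiver one considers the ``sub-Coulomb branch'' corresponding to the rank-$2$ sub-quiver obtained by forgetting the framings and all other vertices, and shows that this sub-algebra is generated by minuscule classes; the full generation then follows by induction on $\sum_i \bv_i$. For simply-laced rank-$2$ configurations the claim is already established in \cite{Weekes}. The essentially new cases are $B_2$ (equivalently $C_2$) and $G_2$, which are handled by the explicit computations of \cref{subsec:1m,subsec:1mzastava}: the presentations underlying \cref{lemma: open zastava,lemma: zastava} show that the relevant algebras are generated by the $\sfy_{1,b}$ for $0\le b\le m$ together with $\sfy_{0,1}$ and $w_1, w_2$, and each generator is visibly a polynomial in the equivariant variables times a single minuscule class $[\cR_{\varpi_{i,1}}]$ or $[\cR_{\varpi_{j,1}^\ast}]$. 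Lifting from the classical to the quantum level is then routine via a Rees-algebra argument using the filtration of \cite[\S6(i)]{2016arXiv160103586B}.

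The main obstacle will be carrying out the reduction to rank $2$ rigorously in the symmetrizable setting. One must verify that the Levi-type embeddings of sub-Coulomb branches used in \cite{Weekes} remain well-defined for the modified moduli space $\cR$ built from multiple formal disks $D_i$ and the branched covers $\pi_{ij}\colon D_j\to D$. A natural approach is to localize $\cA_\hbar$ at a generic point of each root hyperplane $\{w_{i,r} - w_{j,s} = 0\}$ and recognize the result as the analogous localization of the rank-$2$ algebra for the corresponding edge, exactly as in the proof of \cref{thm:slice} in codimension $1$; once this compatibility is established, the inductive generation argument proceeds verbatim.
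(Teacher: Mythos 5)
Your reduction of the statement to surjectivity of $\overline{\Phi}_\mu^\lambda$, and the passage to the classical statement via \cref{thm:slice}, are fine; the gap is in the surjectivity argument. First, the statement you propose to prove --- that $\cA_\hbar$ is generated as a $\CC[\hbar,\equi_k]$--algebra by $H^*_G(\mathrm{pt})$ and the dressed classes over $\pm\varpi_{i,1}$ --- is already false in the unframed rank~$2$ examples you invoke: in \cref{subsec:1m} one has $\bz^*(\sfy_{1,0}\sfy_{0,1})=(w_1-w_2)\,\bz^*(\sfy_{1,1})$, so the generator $\sfy_{1,1}$ (and likewise $\sfy_{1,b}$ for $0<b<m$, and the invertible $\sfy_{1,m}$) lies over a coweight which is not of the form $\pm\varpi_{i,1}$ and is \emph{not} a polynomial in equivariant variables times a product of minuscule classes; it is only recovered after applying $\hbar^{-1}[\,\cdot\,,\cdot\,]$. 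This is precisely why the proof of \cref{Theorem: shifted Yangian to Coulomb branch} speaks of generation of $\mathbf{Y}_\mu[\equi_1,\ldots,\equi_N]$ as a \emph{Poisson} algebra, and it also breaks your claimed ``routine'' Rees/associated-graded lifting, since the classical subalgebra generated by the minuscule classes is strictly smaller than $\cA_\hbar/\hbar\cA_\hbar$. Second, the mechanism you propose for the general case --- localize at generic points of the hyperplanes $w_{i,r}-w_{j,s}=0$, recognize rank~$2$ algebras, and induct on $\sum_i\bv_i$ --- cannot yield a generation statement: knowing generators of localizations (or agreement in codimension one) says nothing about generators of the unlocalized algebra; the codimension-one technique of \cite[Thm.~5.26]{2016arXiv160103586B} serves to \emph{identify} an algebra already known to be cut out by such an intersection, as in the proof of \cref{thm:slice}, not to produce generators. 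Moreover the computations of \cref{subsec:1m} and \cref{subsec:1mzastava} only treat $\bv_1=\bv_2=1$ with $W=0$, so they cannot feed an induction over arbitrary dimension vectors, and no induction step is actually formulated.

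The paper's route is different. Injectivity holds by definition of $\mathbf{Y}_\mu^\lambda$; for dominant $\mu$ surjectivity is proved exactly as in \cite[Cor.~B.28]{2016arXiv160403625B}; for general $\mu$ one follows \cite[Theorem 3.13]{Weekes}: shift homomorphisms are defined for $\mathbf{Y}_\mu[\equi_1,\ldots,\equi_N]$ and $\cA_\hbar$ and are compatible as in \cite[Lemma 3.14]{Weekes}, and the single new geometric input is the claim that $\cA_\hbar$ is generated by the subalgebras $\cA^\pm_\hbar$ supported over the positive and negative parts of $\Gr_G$. That claim is proved by a semigroup argument on the generalized root hyperplane arrangement of \cite[Definition~5.2]{2016arXiv160103586B}: after refining by the hyperplanes $w_{i,r}=0$, each chamber is the product of its positive and negative subcones, so the semigroup of its integral points has generators which are each positive or negative coweights, and the corresponding classes lie in $\cA^+_\hbar$ or $\cA^-_\hbar$. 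If you wish to keep your more direct strategy, you would have to prove a genuine generation theorem in the spirit of \cite{Weekes} (whose proof is not a localization argument), formulated with closure under $\hbar^{-1}[\,\cdot\,,\cdot\,]$ rather than plain algebra generation.
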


\begin{NB}
  It seems that the definition of $\mathbf{Y}_\mu^\lambda$ is missing.
  (May 10)
\end{NB}%

\begin{NB2}
May 15: Added the definition above.
\end{NB2}

\begin{proof}
$\mathbf{Y}_\mu^\lambda \rightarrow \cA_\hbar$ is injective by definition, so we must prove surjectivity.  When $\mu$ is dominant, this follows exactly as in the proof of \cite[Corollary B.28]{2016arXiv160403625B}.   To extend to case of general $\mu$, we follow the same strategy as the proof of \cite[Theorem 3.13]{Weekes}. First, we note that one can define shift homomorphisms for $\mathbf{Y}_\mu[t_1,\ldots,t_N]$ and $\cA_\hbar$, which are compatible as in \cite[Lemma 3.14]{Weekes}.  Second, we claim that $\cA_\hbar$ is generated by its subalgebras $\cA^\pm_\hbar$ corresponding to the loci $\cR^\pm$ lying over the positive and negative parts of the affine Grassmannian (c.f.~\cref{subsec:1mzastava}).  Assuming this claim for the moment, the proof of \cite[Theorem 3.13]{Weekes} now goes through.

To prove the claim about generators, consider the semigroups of integral points in chambers of the generalized root hyperplane arrangement for $\cA_\hbar$ (see \cite[Definition~5.2]{2016arXiv160103586B}).  The hyperplanes in our situation are of three types: (i) $w_{i,r} - w_{i,s} = 0$ for all $i\in I$ and $1\leq r, s \leq \bv_i$, (ii) $f_{ji} w_{i,r} - f_{ij} w_{j,s} = 0$ for any $c_{ij} \neq 0$  and $1\leq r \leq \bv_i, 1\leq s \leq \bv_j$, and (iii) $w_{i,r} = 0$ for any $W_i \neq 0$ and $1\leq r\leq \bv_i$.  Even if $W_i = 0$, we can always refine our arrangement by adding all hyperplanes $w_{i,r}$.  In this refined arrangement, any chamber is the product of its subcones of positive and negative elements. Thus we can choose generators for its semigroup of integral points which are each either positive or negative.  Since the spherical Schubert variety through a positive (resp.~negative) coweight lies inside $\Gr^+$ (resp.~$\Gr^-$), we can lift the above semigroup generators to algebra generators for $\cA_\hbar$ which each lie in one of $\cA^\pm_\hbar$.
This proves the claim.
\end{proof}

\begin{NB2}
May 15: Changed the above proof.  In the second part of the proof, regarding the generalized hyperplane arrangement, is the explanation clear enough? Add a reference to \cite[Proposition 6.8]{2016arXiv160103586B}?
\end{NB2}

\begin{NB}
  I have added the reference to \cite[Definition
  5.2]{2016arXiv160103586B} for the definition of generalized roots. I
  think that the explanation is enough. A serious reader must read
  \cite{Weekes} anyway.

  I also changed the generalized root of type (ii) slightly, as $d_i$,
  $d_j$ do not enter the definition of the Coulomb branch. The
  hyperplane itself remains the same, though. May 15.
\end{NB}%

%


\appendix
\section{A zastava space for $G_2$}\label{sec:app}

We give an explicit presentation of the coordinate ring of the zastava $Z^{\alpha_1+\alpha_2}$ of type $G_2$, thought of as a variety over a field of characteristic zero (for simplicity, we will simply work over $\CC$).  This presentation is similar to those for other rank 2 types given in \cite[Sections 5.5--5.8]{bdf}.  

Denote by $\g$ the Lie algebra of type $G_2$, and write $V(\lambda)$ for its irreducible representation of highest weight $\lambda$.  Following the notation \cite[Table 22.1]{MR1153249}, we pick a basis for the adjoint representation:
$$
V({\varpi_2}) \cong \g = \operatorname{span}_\CC \{ H_1, H_2, X_i, Y_i : 1\leq i \leq 6 \},
$$
Here $X_i, H_i, Y_i$ with $i=1, 2$ are the Chevalley generators with respect to the Cartan matrix $\begin{pmatrix} 2 & -3 \\ -1 & 2 \end{pmatrix}$.  Note this is the transpose of the convention taken in \cref{subsec:1mzastava}.  We define $X_3 = [X_1, X_2]$, $X_4 = \tfrac{1}{2} [X_1, X_3]$, $X_5 = -\tfrac{1}{3}[X_1, X_4]$, $X_6 = -[X_2, X_5]$ and similarly for the $Y_i$ (but with opposite signs).  In particular, $X_6$ is a highest weight vector.  Following \cite[pg. 354]{MR1153249}, we also pick a basis for the first fundamental representation:
$$
V({\varpi_1}) = \operatorname{span}_\CC \{ V_4, V_3, V_1, U, W_1, W_3, W_4\},
$$
where $V_4$ is a highest weight vector and $V_3 = Y_1 \cdot V_4$, $V_1 = - Y_2 \cdot V_3$, $U = Y_1 \cdot V_1$, $W_1 = \tfrac{1}{2} Y_1\cdot U$, $W_3 = Y_2 \cdot W_1$, and $W_4 = -Y_1\cdot W_3$.

Using the above notation, recall that $Z^{\alpha_1+\alpha_2}$ has a description as  Pl\"ucker sections \cite[Section 5]{mf}:  it is the space of pairs $v_{\varpi_i} \in V(\varpi_i)[z]$ for $i=1,2$ such that (a) the coefficient of $V_4$  in $v_{\varpi_1}$ (resp. $X_6$ in $v_{\varpi_2}$) is monic of degree one, (b) the coefficients of all other basis vectors have degree zero, and (c) certain Pl\"ucker-type relations must hold (see the proof below for certain cases).

\begin{Proposition}
Scheme-theoretically, $Z^{\alpha_1 + \alpha_2}$ is the set of pairs 
\begin{align*}
v_{\varpi_1} &= (z+A_1) V_4 + b_0 V_3 + b_2 V_1 + b_3 U + b_4 W_1, \\
v_{\varpi_2} &= (z+A_2) X_6 + b_1 X_5 + b_2 X_4 + b_3 X_3 + b_4 X_2
\end{align*}
whose coefficients satisfy
$$
b_0 b_1 = (A_2 - A_1 ) b_2, \ \ b_0 b_2 = (A_1 - A_2) b_3, \ \ b_0 b_3 = (A_1 - A_2) b_4, 
$$
$$
b_2^2 = - b_1 b_3, \ \  b_2 b_3 = - b_1 b_4, \ \ b_3^2 = b_2 b_4
$$

\end{Proposition}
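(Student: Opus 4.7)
My plan is to follow the strategy of \cite[\S\S 5.5--5.8]{bdf}, where rank two zastava spaces in simply-laced and type $B$ cases were presented using the Plücker section description from \cite[\S 5]{mf}. Recall that $Z^{\alpha_1+\alpha_2}$ parameterizes pairs of polynomial Plücker sections $v_{\varpi_i}\in V(\varpi_i)\otimes\CC[z]$ ($i=1,2$) satisfying degree conditions (a)--(b) together with the fiberwise Plücker relations: for every $z_0 \in \CC$ the pair $(v_{\varpi_1}(z_0), v_{\varpi_2}(z_0))$ must lie in the affine cone over $G/B$ inside $V(\varpi_1)\oplus V(\varpi_2)$.

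The first step is to parameterize: conditions (a)--(b) force the non-highest-weight coefficients of $v_{\varpi_1}$ (resp.\ $v_{\varpi_2}$) to be complex scalars indexed by the weight vectors $V_3, V_1, U, W_1, W_3, W_4$ (resp.\ $X_5, X_4, X_3, X_2, X_1, H_1, H_2, Y_1,\dots,Y_6$). The second step is to impose the fiberwise Plücker relations, controlled by the $G$-equivariant decompositions of tensor products of fundamental representations. I will use three families: (i) $V(\varpi_2)\otimes V(\varpi_2)\cong V(2\varpi_2)\oplus V(\varpi_1)\oplus V(\varpi_2)\oplus\CC$, which cuts out the minimal nilpotent orbit closure in $V(\varpi_2)$ in which $v_{\varpi_2}(z_0)$ must lie; (ii) the embedding $V(\varpi_2)\subset\wedge^2 V(\varpi_1)$ with its complementary projection, expressing $v_{\varpi_1}\wedge v_{\varpi_1}$ as a multiple of $v_{\varpi_2}$; and (iii) the decomposition $V(\varpi_1)\otimes V(\varpi_2)\cong V(\varpi_1+\varpi_2)\oplus V(2\varpi_1)\oplus V(\varpi_1)$, whose non-Cartan-product components yield incidence conditions tying the two sections together.

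Carrying out these projections explicitly in the basis of \cite[\S 22]{MR1153249}, I expect the following: relation (i) forces the Cartan and negative-root coefficients of $v_{\varpi_2}$ (coefficients of $H_i, Y_j, X_1$) to vanish and produces the three purely quadratic relations $b_2^2=-b_1b_3$, $b_2b_3=-b_1b_4$, $b_3^2=b_2b_4$; relation (ii) forces the coefficients of $W_3,W_4$ in $v_{\varpi_1}$ to vanish; relation (iii) identifies the coefficients of $V_1,U,W_1$ in $v_{\varpi_1}$ with those of $X_4,X_3,X_2$ in $v_{\varpi_2}$, thereby justifying the common notation $b_2,b_3,b_4$, and yields the three relations linear in $A_1-A_2$: $b_0b_1=(A_2-A_1)b_2$, $b_0b_2=(A_1-A_2)b_3$, $b_0b_3=(A_1-A_2)b_4$.

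Finally, to check scheme-theoretic equality I will verify that the six stated relations already generate the full Plücker ideal. On the open locus $\{b_0\neq 0,\, A_1\neq A_2\}$ one eliminates $b_1,b_3,b_4$ in terms of $A_1,A_2,b_0,b_2$, exhibiting a free four-dimensional affine space; this matches the expected dimension of $Z^{\alpha_1+\alpha_2}$ and is consistent with the zastava factorization over $\BA^{\alpha_1+\alpha_2}$ \cite[Thm.~1.6(3)]{bdf}, which at generic points ($A_1\neq A_2$) reduces to the product $Z^{\alpha_1}\times Z^{\alpha_2}\cong\BA^2$. The main obstacle will be the explicit equivariant projection computation in steps (i)--(iii), since tracking $G_2$ Clebsch--Gordan coefficients and sign conventions in the normalizations of \cite[\S 22]{MR1153249} is where essentially all the work lies; the dimension/reducedness check showing that no additional relations are needed is routine once this computation is in hand.
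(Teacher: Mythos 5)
Your plan for producing the six relations (explicit equivariant projections in the tensor products of the fundamental representations) is a reasonable hand-computation substitute for what the paper actually does, namely imposing the eigenvalue conditions for a fixed invariant element $\Omega_2\in\g\otimes\g$ on $v_{\varpi_i}\otimes v_{\varpi_j}$, $1\le i\le j\le 2$, and computing the resulting ideal in Sage. The genuine gap is in your last step. You propose to ``verify that the six stated relations already generate the full Pl\"ucker ideal'' and to certify scheme-theoretic equality by an elimination on the open locus $\{b_0\neq 0,\ A_1\neq A_2\}$ plus a dimension count, calling the reducedness check routine. This is exactly where the statement is delicate: the Sage computation in the paper shows that the ideal cut out by the Pl\"ucker-type (eigenvalue) conditions is \emph{not} primary --- it has two primary components, of dimensions $4$ and $1$ --- so the Pl\"ucker conditions you impose do not cut out $Z^{\alpha_1+\alpha_2}$ scheme-theoretically, and the six relations do not generate that ideal. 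The extra $1$-dimensional component lives over the boundary locus that your open-chart elimination never sees, so a generic check (or the factorization $Z^{\alpha_1}\times Z^{\alpha_2}$ over $A_1\neq A_2$) cannot detect it, and cannot rule out extra or embedded components of the six-relation ideal along $b_0=0$ or $A_1=A_2$ either.

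What is actually needed, and what the paper supplies, is an identification of $Z^{\alpha_1+\alpha_2}$ (known a priori to be an irreducible, reduced, $4$-dimensional closed subscheme of the vanishing locus) with the $4$-dimensional primary component of the computed ideal; equivalently, on your route you would have to prove that the ideal generated by the six relations (together with the vanishing of the remaining coefficients) is prime of dimension $4$ before the ``$Z$ is contained in it, both are reduced and irreducible of the same dimension, hence equal'' argument closes. That primality/primary-decomposition input is a nontrivial commutative-algebra fact, obtained in the paper by computer, and is not a consequence of the open-locus computation you describe; as written, your final verification step would fail.
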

\begin{NB2}
April 16: Notation changed slightly above
\end{NB2}
\begin{proof}
Fix a non-zero $\g$--invariant element $\Omega_2 \in \g \otimes \g$\footnote{For the purposes of our Sage calculation, we chose $\Omega_2$ corresponding to the trace form on $V(\varpi_1)$.}.  This can be considered as an operator on any $V(\lambda)\otimes V(\mu)$, and it distinguishes the canonical summand $V(\lambda+\mu) \subset V(\lambda)\otimes V(\mu)$ as an eigenspace \cite[Section 14.12]{Kac}.  

Consider an arbitrary pair $v_{\varpi_i} \in V(\varpi_i)[z]$ for $i=1,2$ satisfying the degree requirements a), b) above.  Using Sage, we compute the ideal defined by the above eigenvalue conditions for $\Omega_2$ applied to $v_{\varpi_i} \otimes v_{\varpi_j}$ where $1\leq i \leq j \leq 2$.  We find that this ideal has two primary components, which have dimensions 4 and 1, respectively. Since $Z^{\alpha_1 + \alpha_2}$ is a 4-dimensional irreducible closed subscheme living inside the vanishing locus of this ideal, it must correspond to the 4-dimensional primary component.  This yields the description claimed above.
\end{proof}

\begin{Remark}
Comparing with \cref{subsec:1mzastava} in the case $m=3$, we can identify the above coordinates with the generators of the Coulomb branch as follows:  $w_1 = -A_2, w_2 = -A_1, \sfy_{0,1} = b_0, \sfy_{1,0} = - b_1, \sfy_{1,1} = b_2, \sfy_{1,2} = b_3$ and $\sfy_{1,3} = b_4$.  
\end{Remark}
\begin{NB2}
April 16: added the above remark, and modified the one below
\end{NB2}

\begin{Remark}
To match the proposition with the conventions of \cite[Section 5.8]{bdf}, we take $\overline{w}_i = - A_1, \overline{w}_j = -A_2, \overline{y}_i = b_0$ and $\overline{y}_j = - b_1$ (we add overlines to avoid confusion with our notation for Coulomb branches).  The equation of the boundary of $Z^{\alpha_1+\alpha_2}$ is then
$$
- \frac{\overline{y}_i^3 \overline{y}_j}{(\overline{w}_i-\overline{w}_j)^3} = - \frac{b_0^3 b_1}{(A_1 - A_2)^3} =  b_4
$$
This is consistent with our comparison with the open zastava from \cref{subsec:1m}: by the previous remark $b_4 = \sfy_{1,3}$, which is invertible in $H^{G_\cO}_*(\cR)$. It is also easy to see that $H^{G_\cO}_*(\cR)$ is generated by the inverse element $\sfy_{-1,-3}$ together with $H^{G_\cO}_*(\cR^+)$, as expected.

\begin{NB2}Actually there seems to be a mistake exchanging $i$ and $j$ in \cite{bdf}.\end{NB2}

\begin{NB2} April 16: After discussing with Misha, there is no mistake in \cite{bdf}.  Rather they use $d_i$ for coroot lengths, not root \end{NB2}
\end{Remark}

\begin{NB2}
Should consider factorization structure following \cite{bdf}
\end{NB2}

\begin{NB}
    As far as I understand, we only need to identify what is the
    factorization morphism. It is just given by $(w_1, w_2)$, as it is
    so on the open locus. What else do we need ?
\end{NB}%

\begin{NB2}
April 16: I agree, we do not need more than what you said
\end{NB2}


\section{Fixed point sets}\label{sec:fixed}

Consider the category $\mathcal C$ of finitely generated right modules
of the quantized Coulomb branch $H^{G_\cO\rtimes\CC^\times}_*(\cR)$
such that (1) $\hbar\in H^*_{\CC^\times}(\mathrm{pt})$ acts by a
nonzero complex number, say $1$, and (2) it is locally finite over
$H^*_{G}(\mathrm{pt})$, hence it is a direct sum of generalized
simultaneous eigenspaces of $H^*_{G}(\mathrm{pt})$. (When we include
an additional flavor symmetry, we assume that the corresponding
equivariant parameter acts by a complex number.) One can apply
techniques of the localization theorem in equivariant ($K$)-homology
groups of affine Steinberg varieties in \cite{MR3013034} to study the
category $\mathcal C$. This theory, for the ordinary Coulomb branch,
will be explained elsewhere \cite{modules}. (See also
\cite{2016arXiv161106541W,2019arXiv190405415W} for another algebraic approach different from one in \cite{MR3013034}.)
It also works in our current setting. As a consequence, we have for example
\begin{Theorem}
  Let $\lambda\in\mathfrak t$.
  There is a natural bijection between
  \begin{itemize}
  \item simple modules in $\mathcal C$ such that one of eigenvalues
    above is given by evaluation
    $H^*_G(\mathrm{pt})\cong \CC[\mathfrak t]^\Weyl\to \CC$ at
    $\lambda$,
    \begin{NB}
      More general eigenvalues are translations of $\lambda$ by
      cocharacters of $T$,
    \end{NB}%
  \item simple perverse sheaves which appear, up to shift, in the
    direct image of constant sheaves on the fixed point subset
    $\cT^{(\lambda,1)}$ under the projection
    $\cT^{(\lambda,1)}\to \bN_\cK^{(\lambda,1)}$.
  \end{itemize}
\end{Theorem}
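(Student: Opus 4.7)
The plan is to apply the general machinery of convolution algebras and equivariant localization, as developed for the ordinary Coulomb branch in \cite{MR3013034, modules}, and verify that it carries over verbatim to our symmetrizable setting.

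First I would realize $\cA_\hbar = H^{G_\cO \rtimes \CC^\times}_*(\cR)$ as a convolution algebra acting on the Borel–Moore homology of affine Steinberg-type varieties built from $\cR$ over $\bN_\cK$. The convolution product, already used to define the multiplication on $\cA_\hbar$ in \cref{subsection: convolution product}, gives $H^{G_\cO \rtimes \CC^\times}_*(\cT)$ (and various relative analogues) the structure of an $\cA_\hbar$-module. The fact that the disks $D_i$ are taken individually for each vertex does not affect any of the formal manipulations, so the module structure is built exactly as in \cite[\S3(iii)]{2016arXiv160103586B}.

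Next I would apply the equivariant localization theorem at the point $(\lambda, 1) \in \mathfrak t \times \CC$. The eigenvalue assumption on $H^*_G(\mathrm{pt}) \cong \CC[\mathfrak t]^\Weyl$ and the normalization $\hbar = 1$ single out exactly those modules in $\mathcal C$ whose support in $\operatorname{Spec} H^*_{G \times \CC^\times}(\mathrm{pt})$ contains $(\lambda, 1)$. By localization applied to the $T \times \CC^\times$-action, the corresponding localized convolution algebra acts through its restriction to the fixed loci $\cR^{(\lambda,1)} \subset \cT^{(\lambda,1)}$, which in turn project to $\bN_\cK^{(\lambda,1)}$. Then I would invoke the Ginzburg-style classification of simple modules of a convolution algebra: by the decomposition theorem applied to the projection $\cT^{(\lambda,1)} \to \bN_\cK^{(\lambda,1)}$, simple modules are in natural bijection with the simple perverse sheaves on $\bN_\cK^{(\lambda,1)}$ appearing (up to shift) as summands of the direct image of the constant sheaf on $\cT^{(\lambda,1)}$. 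This is precisely the stated bijection.

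The main obstacle is that $\cT$, $\bN_\cK$, and their fixed loci are all of infinite type. Making sense of Borel–Moore homology, the localization theorem, and Ginzburg's classification in this setting requires the double-limit formalism of \cite[\S2(ii)]{2016arXiv160103586B}, refined by a compatible system of finite-dimensional approximations which are stable under the modified $\CC^\times$-action of \cref{section: loop rotation}. The construction of such approximations, and the proof that the localization and decomposition theorems behave well in the limit, is carried out in \cite{modules} for the ordinary Coulomb branch; the only new input needed in our setting is the verification that the approximations respect the individualized disk structure $D_i$, which is formal. Once this technical step is in place, the bijection follows as above.
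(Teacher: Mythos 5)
Your plan follows essentially the same route as the paper's own treatment: the paper offers no detailed argument here, but simply invokes the localization/convolution machinery for affine Steinberg varieties of \cite{MR3013034}, to be worked out for Coulomb branches in \cite{modules}, and observes that it carries over to the present setting with separate disks $D_i$ --- which is exactly your reduction (convolution-algebra structure, localization at $(\lambda,1)$ with $\hbar=1$, Ginzburg-style classification of simples via the decomposition theorem for $\cT^{(\lambda,1)}\to \bN_\cK^{(\lambda,1)}$, and the double-limit formalism for the infinite-dimensional spaces). So the proposal is correct and essentially identical in approach to the paper's.
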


Here $\mathfrak t$ is the Lie algebra of a maximal torus of $G$,
$\Weyl$ is the Weyl group of $G$, and $(\lambda,1)$ is the element of
the Lie algebra of $T\times\CC^\times$, which acts on $\mathcal T$ and
$\bN_\cK$, as a subgroup of $G_\cO\rtimes\CC^\times$. Fixed point
subsets are written as $\cT^{(\lambda,1)}$, $\bN_\cK^{(\lambda,1)}$,
and the projection is the restriction of $\Pi\colon \cT\to\bN_\cK$.

\begin{NB}
The translation of $\lambda$ by a cocharacter $\mu$ of $T$ means as
follows. Recall $G = \prod \GL(V_i)$. So we have
$\lambda = \sum \lambda_i$, $\mu=\sum \mu_i$. We regard $\mu_i$ as an
element of the Lie algebra of $T(V_i)$. Recall that we have $d_i$,
appearing in the power of loop rotation action for the affine
Grassmannian $\Gr_{\GL(V_i)}$ (see \cref{section: loop
  rotation}). Then the translation of $\lambda$ by $\mu$ is the sum of
$\lambda_i + d_i \mu_i$.
\end{NB}%
\begin{NB}
  This shift comes from the finding a multiplicative subset satisfying
  the Ore condition from $H^*_G(\mathrm{pt})$: Since
  $H^*_G(\mathrm{pt})$ is not central, we do not have $f x = x f$ for
  $f\in H^*_G(\mathrm{pt})$ in general. We change $f x$ to $x' f'$,
  then $f'$ is a `shift' of $f$. It can be computed from (a corrected
  version of) \cite[Prop.~6.2]{2016arXiv160103586B}.
\end{NB}%

We study the fixed point set $\cT^{(\lambda,1)}$,
$\bN_\cK^{(\lambda,1)}$ in this section. For simplicity, we assume
$\lambda$ is a differential of a cocharacter, denoted by the same
symbol $\lambda$. (See \cref{rem:non-integral} for general case.)
Therefore we study the fixed point set with respect to a one parameter
subgroup $\tau\mapsto (\lambda(\tau),\tau)$.

\subsection{}

Consider the affine Grassmannian $\Gr_G$. We have an action of
$G_\cO\rtimes\CC^\times$ on $\Gr_G$ given by
$(h(z),\tau)\cdot [g(z)] = [h(z) g(z\tau)]$.
Take a cocharacter $\lambda\colon\CC^\times\to T$ and consider a
homomorphism
$\tau \mapsto (\lambda(\tau), \tau)\in T\times\CC^\times\subset
G_\cO\rtimes\CC^\times$, where $\lambda(\tau)$ is regarded as a constant
loop in $G_\cO$. Let
\begin{equation*}
  \Gr_G^{(\lambda(\tau),\tau)} \defeq
  \{ [g(z)]\in \Gr_G \mid (\lambda(\tau),\tau)\cdot [g(z)] = [g(z)] \}
\end{equation*}
be the fixed point set of $\lambda\times\id$ in $\Gr_G$. It consists
of equivalence classes $[g(z)]$ where
\begin{equation*}
  g(z) = \lambda(z)^{-1} \varphi(z)\quad
  \text{for a cocharacter $\varphi\colon\CC^\times\to G$}.
\end{equation*}
\begin{NB}
  We have $(\lambda(\tau),\tau)\cdot[\lambda(z)^{-1}\varphi(z)]
  = [\lambda(\tau)\lambda(z\tau)^{-1} \varphi(z\tau)]
  = [\lambda(z)^{-1} \varphi(z)\varphi(\tau)]
  = [\lambda(z)^{-1} \varphi(z)]$.
\end{NB}%
To see this let us identify $\Gr_G$ with $\Omega G_c$ the space of
polynomial based maps $(S^1,1)\to (G_c,1)$, where $G_c$ is a maximal
compact subgroup of $G$. Then $g\in\Omega G_c$ is fixed if and only if
$\lambda(\tau) g(z\tau) g(\tau)^{-1} \lambda(\tau)^{-1} = g(z)$. It means
that $z\mapsto \lambda(z) g(z)$ is a group homomorphism.

Alternatively the fixed point set can be identified as follows: Let
\begin{equation*}
  \begin{split}
  Z_{G_\cK}(\lambda(\tau),\tau) & = \text{centralizer of
    $(\lambda(\tau),\tau)$ in $G_\cK$} \\
  &= \{ g(z)\in G_\cK \mid \lambda(\tau) g(z\tau) \lambda(\tau)^{-1} = g(z)
  \}.
  \end{split}
\end{equation*}
Then $g(z=1)$ is well-defined and
$g(z) = z^{-\lambda} g(z=1) z^\lambda$, hence
$Z_{G_\cK}(\lambda(\tau),\tau)\cong G$ via $g(z)\mapsto g(1)$. (We
switch the notation from $\lambda(z)$ to $z^\lambda$.) Then the fixed
point set is
\begin{equation*}
  \bigsqcup_{\mu} Z_{G_\cK}(\lambda(\tau),\tau)\cdot [z^{-\lambda+\mu}],
\end{equation*}
where $\mu$ is a dominant coweight of $G$, and $z^{-\lambda+\mu}$ is
regarded as a point in $\Gr_G$. The
$Z_{G_\cK}(\lambda(\tau),\tau)$-orbit through $z^{-\lambda+\mu}$ is a
partial flag variety $G/P_\mu$, where $P_\mu$ is the parabolic
subgroup corresponding to $\mu$.

\subsection{}
More generally consider a homomorphism
$\tau\mapsto (\lambda(\tau), \tau^m)$ for $m\in\ZZ_{>0}$. We suppose
$G = \GL(V)$ and decompose $V = \bigoplus V(k)$ so that $\lambda(\tau)$
acts on $V(k)$ by $\tau^{k}\id_{V(k)}$. We consider $k$ modulo $m$ and
decompose $V$ as
\begin{equation*}
  V = V\{1\}\oplus\cdots \oplus V\{m\}, \qquad
  \text{where $V\{ k\} = \bigoplus_{l\equiv k\bmod m} V(l)$}.
\end{equation*}
Let $G' \defeq \GL(V\{1\})\times\cdots\times\GL(V\{m\})$.
Then $[g(z)]$ is fixed by $(\lambda(\tau),\tau^m)$ if and only if
$[g(z)] = ([g_1(z)],\dots,[g_m(z)]) \in \Gr_{G'}$ such that
\begin{gather*}
  g_k(z) = \lambda_k(z)^{-1} \varphi_k(z)\quad
  \text{for a cocharacter $\varphi_k\colon\CC^\times\to \GL(V\{k\})$}
  \quad (k=1,\dots, m).
\end{gather*}
Here $\lambda_k(z)$ is defined so that it acts by $z^{(l-k)/m}$ on $V(l)$.
\begin{NB}
  So $\lambda(z) \lambda_k(z^m)^{-1} = z^k\id$ on $V\{k\}$. Hence
  $(\lambda(\tau),\tau))\cdot [\lambda_k(z)^{-1} \varphi_k(z)]
  = [\lambda(\tau) \lambda_k(z\tau^m)^{-1} \varphi_k(z\tau^m)]
  = [ \tau^k \lambda_k(z)^{-1} \varphi_k(z)] = [\lambda_k(z)^{-1}\varphi_k(z)]$.
\end{NB}%
It is proved as follows. Take a based loop model $g\in\Omega G_c$. It
is fixed if and only if
$\lambda(\tau) g(z\tau^m) g(\tau^m)^{-1} \lambda(\tau)^{-1} =
g(z)$. Taking $\tau = \omega$, a primitive $m$-th root of unity, we
see that $g(z)$ preserves the decomposition
$V = V\{1\}\oplus\cdots\oplus V\{m\}$, hence it is in $\Gr_{G'}$. Let
$g_k(z)$ be the $k$-th component. Note that $\lambda(\tau)$ is
$\tau^k \lambda_k(\tau^m)$ on $V\{k\}$. Therefore we have
$\lambda_k(\tau^m) g_k(z\tau^m) g_k(\tau^m)^{-1}
\lambda_k(\tau^m)^{-1} = g_k(z)$. Hence $\lambda_k(z) g_k(z)$ is a
group homomorphism, which we denoted by $\varphi_k(z)$.

\begin{NB}
  This argument implicitly uses the fact that the centralizer of
  $\lambda(\omega)$ is connected. It is not true in general. Hence the
  following will be commented out.

It can be rephrased as follows: Note first that the action of
$(\lambda(\tau), \tau^m)$ can be also written as
$(\lambda(\tau),\tau^m)\cdot [g(z)] = [\lambda(\tau) g(z\tau^m)
\lambda(\tau)^{-1}]$.
We define
$G' \defeq \{ g\in G\mid \lambda(\omega)g\lambda(\omega)^{-1} = g\}$, where
$\omega$ is a primitive $m$-th root of unity. When $g\in G'$ the
adjoint action $\lambda(\tau)g\lambda(\tau)^{-1}$ is given by
$\lambda'(\tau^m) g \lambda'(\tau^m)^{-1}$ for some
$\lambda'\colon\CC^\times\to T$. Now $g(z) = \lambda'(z)^{-1}\varphi(z)$
with a cocharacter $\varphi\colon \CC^\times\to G'$ is fixed as
\begin{equation*}
  \begin{split}
  &\phantom{{}={}} [\lambda(\tau) g(z\tau^m) \lambda(\tau)^{-1}]
  = [\lambda'(\tau^m) g(z\tau^m) \lambda'(\tau^m)^{-1}]
  = [\lambda'(\tau^m) \lambda'(z\tau^m)^{-1}\varphi(z\tau^m) \lambda'(\tau^m)^{-1}]\\
  &= [\lambda'(z)^{-1}\varphi(z)].
  \end{split}
\end{equation*}
In the above situation we have
$G' = \GL(V\{1\})\times\cdots\times \GL(V\{m\})$. The above
$\lambda_k$, $\varphi_k$ ($1\le k\le m$) give the current $\lambda'$,
$\varphi$ as $\lambda' = \bigoplus_k \lambda_k$,
$\varphi = \bigoplus_k \varphi_k$.
Moreover this formulation makes sense for arbitrary $G$.

In the based loop model $g\in \Omega G_c$, it is fixed if and only if
$\lambda(\tau) g(z\tau^m) g(\tau^m)^{-1} \lambda(\tau)^{-1} = g(z)$. Taking
$\tau = \omega$, we see that $g(z) \in G'$. Then we can rewrite the
left hand side of the equation as
$\lambda'(\tau^m) g(z\tau^m) g(\tau^m)^{-1} \lambda'(\tau^m)^{-1}$. Hence we
see that $\lambda'(z) g(z)$ is a homomorphism as above.
\end{NB}%

Let 
$\lambda' \defeq \lambda_1\oplus\cdots\oplus\lambda_{m}$,
$\varphi \defeq \varphi_1\oplus\cdots\oplus \varphi_{m}$.
The connected component of $\Gr_G^{(\lambda(\tau),\tau^m)}$ containing
$[g(z)] = [\lambda'(z)^{-1}\varphi(z)]$ is a partial flag manifold
$G'/P_{\varphi}$ where $P_{\varphi}$ is a parabolic subgroup defined by
$\{ g\in G' \mid \exists\lim_{z\to 0}\varphi(z)^{-1}g\varphi(z)\}$.
\begin{NB}
  The action of $G'$ is given by
  $[\lambda'(z)^{-1} \varphi(z)] \mapsto [\lambda'(z)^{-1} g\varphi(z)]$
  ($g\in G'$). Then
  $[\lambda'(z)^{-1} g\varphi(z)] = [\lambda'(z)^{-1}\varphi(z)]$ if and
  only if $\varphi(z)^{-1} g\varphi(z) \in G'_\cO$.
\end{NB}%

Note that the decomposition $V = V\{1\}\oplus\cdots\oplus V\{m\}$
and the group $G'$ depends on the choice of $\lambda$. If we take
$\lambda = 1$ for example, we have $V = V\{m\}$ and $G' = G$.

Alternative description is as follows: Let
\begin{equation*}
  \begin{split}
    Z_{G_\cK}(\lambda(\tau),\tau^m) &=
    \text{the centralizer of $(\lambda(\tau),\tau^m)$ in $G_\cK$} \\
    &= \{ g(z)\in G_\cK \mid g(z) = z^{-\lambda'} g(z=1) z^{\lambda'},
    g(z=1)\in G'\}.
  \end{split}
\end{equation*}
It is isomorphic to $G'$
by $g(z) \mapsto g(z=1)\in G'$. Then the fixed point set is
\begin{equation*}
  \bigsqcup_\mu Z_{G_\cK}(\lambda(\tau),\tau^m) \cdot [z^{-\lambda'+\mu}],
\end{equation*}
where $\mu$ is a dominant cocharacter of $G'$, and the orbit
$Z_{G_\cK}(\lambda(\tau),\tau^m) \cdot [z^{-\lambda'+\mu}]$ is
isomorphic to the partial flag variety $G'/P_\mu$.

\begin{Remark}
  For general reductive groups $G$, the centralizer
  $Z_{G_\cK}(\lambda(\tau),\tau^m)$ could be
  disconnected. Nevertheless the description is still valid, if we
  replace $Z_{G_\cK}(\lambda(\tau),\tau^m)$ by its connected component
  $Z_{G_\cK}^0(\lambda(\tau),\tau^m)$.
  \begin{NB}
    I am not sure that $\lambda'$ does make sense in general. But the
    description is still valid, if we replace $z^{-\lambda'+\mu}$
    simply by $z^\mu$ and require that it is dominant with respect to
    $Z_{G_\cK}^0(\lambda(\tau),\tau^m)$.
  \end{NB}%
\end{Remark}

\subsection{}

Let us consider the case $I=\{1,2\}$, $c_{12} = -1$, $c_{21} = -m$
($m\in\ZZ_{>0}$) as in \cref{subsec:1m}. We have $z_1 = z = z_2^m$. We
consider the variety $\cT$, where we regard it as the space consisting of
\begin{itemize}
\item $[g_1(z_1)]\in \GL(V_1)((z_1))/\GL(V_1)[[z_1]]$,
\item $[g_2(z_2)]\in \GL(V_2)((z_2))/\GL(V_2)[[z_2]]$,
\item $B\in \Hom_{\CC((z_1))}(V_1((z_1)), V_2((z_2)))$ such that
  $g_2(z_2)^{-1} B g_1(z_1)$ is regular at $z_1=0$.
\end{itemize}
Here $V_2((z_2))$ is regarded as a $\CC((z_1))$-module via
$z_1 = z_2^m$. By the projection formula, we identify it with an
element in $\Hom_{\CC((z_2))}(V_1((z_2)), V_2((z_2)))
\cong \Hom_\CC(V_1,V_2)((z_2))$, and denote it by $B(z_2)$.
\begin{NB}
  Note
  $\CC((z_2)) = \CC((z_1))\oplus z_2\CC((z_1))\oplus\cdots \cdots
  z_2^{d-1}\CC((z_1))$. Therefore
  $B\in \Hom_{\CC((z_1))}(V_1((z_1)), V_2((z_2)))$ decomposes
  $B_0\oplus z_2 B_1\oplus\cdots\oplus z_2^{d-1} B_{d-1}$ with
  $B_i \in\Hom_{\CC((z_1))}(V_1((z_1)), V_2((z_1)))\cong \Hom(V_1,
  V_2)((z_1))$. If we write $B_i$ as $B_i(z_1)$, then $B$ as
  $\Hom_{\CC((z_2))}(V_1((z_2)), V_2((z_2))) \cong \Hom(V_1,
  V_2)((z_2))$ is written as $B(z_2) = B_0(z_1) + z_2 B_1(z_1)
  + \cdots + z_2^{d-1} B_{d-1}(z_1)$.
\end{NB}%
The action of
$(\GL(V_1)[[z_1]]\times \GL(V_2)[[z_2]])\rtimes \CC^\times$ on the
component $B(z_2)$ is given by
\begin{equation*}
  B(z_2) \mapsto h_2(z_2) B(z_2\tau) h_1(z_1)^{-1}\quad
  (h_1(z_1), h_2(z_2),\tau)\in
  (\GL(V_1)[[z_1]]\times \GL(V_2)[[z_2]])\rtimes \CC^\times.
\end{equation*}
Note that the loop rotation acts on $\GL(V_1)[[z_1]]$ by
$h_1(z_1)\mapsto h_1(\tau^m z_1)$ as $z_1 = z_2^m$. Also
$h_1(z_1)^{-1}$ is regarded as a function in $z_2$ via $z_1 = z_2^m$.

We take $\lambda_1\colon\CC^\times\to T(V_1)$,
$\lambda_2\colon \CC^\times\to T(V_2)$ as above, and consider the fixed
point set $\cT^{(\lambda_1(\tau),\lambda_2(\tau),\tau)}$ in $\cT$ with
respect to $\lambda_1\times\lambda_2\times\id$.
Then we have a decomposition
\begin{equation*}
  V_1 = V_1\{1\}\oplus\cdots\oplus V_1\{m\}
\end{equation*}
and $([g_1(z_1)], [g_2(z_2)])\in \Gr_{\GL(V_1)}\times \Gr_{\GL(V_2)}$
is given as
\begin{equation*}
  g_1(z_1) = \lambda'_1(z_1)^{-1} \varphi_1(z_1), \quad
  g_2(z_2) = \lambda_2(z_2)^{-1}\varphi_2(z_2)
\end{equation*}
for cocharacters
$\varphi_1\colon\CC^\times\to
\GL(V_1\{1\})\times\cdots\times\GL(V_1\{m\})$ and
$\varphi_2\colon\CC^\times\to \GL(V_2)$. Here $\lambda'_1$ is defined
from $\lambda_1$ as above.

\begin{Remark}\label{rem:non-integral}
  More generally we could study the fixed point set with respect to a
  cocharacter $\tau\mapsto (\lambda_1(\tau), \lambda_2(\tau), \tau^d)$ for
  $d \in\ZZ_{> 0}$. But the fixed point set will be just the union of
  $d$ copies of the fixed point set below, hence it does not yield a
  new space. On the other hand, this modification yields a new space
  when a quiver has a loop. See \cite{MR3013034}.
\end{Remark}

Let us consider the remaining component $B(z_2)$. It is fixed by the
action if and only if
\begin{equation*}
  B(z_2) = \lambda_2(\tau) B(z_2\tau) \lambda_1(\tau)^{-1}.
\end{equation*}
If we expand $B(z_2)$ as
$\cdots + B^{(-1)} z_2^{-1} + B^{(0)} + B^{(1)} z_2 + B^{(2)} z_2^2 +
\cdots$, this equation is equivalent to
\begin{equation*}
  B^{(n)} = \tau^n \lambda_2(\tau) B^{(n)} \lambda_1(\tau)^{-1}.
\end{equation*}
When we decompose $V_1$, $V_2$ as $\bigoplus V_1(k)$,
$\bigoplus V_2(k)$ as eigenspaces with respect to $\lambda_1(\tau)$,
$\lambda_2(\tau)$ as before, this equation means that $B^{(n)}$ sends
$V_1(i)$ to $V_2(i-n)$. In particular, $B^{(n)}$ must vanish if $|n|$
is sufficiently large, hence $B(z_2)$ is a Laurent polynomial. We see that
the evaluation $B(z_2=1)$ at $z_2 = 1$ \emph{does} make sense and is
equal to $\cdots + B^{(-1)} + B^{(0)} + B^{(1)} + \cdots$. Then
$B(z_2)$ is recovered from $B(z_2=1)$ by the formula
\begin{equation*}
  B(z_2) = \lambda_2(z_2)^{-1} B(z_2=1) \lambda_1(z_2).
\end{equation*}
Thus the fixed point set in
$\Hom_{\CC((z_2))}(V_1((z_2)), V_2((z_2)))$ is identified with the space
$B(z_2=1)\in \Hom_\CC(V_1, V_2)$.
\begin{NB}
Thus the fixed point set in
$\Hom_{\CC[[z_2]]}(V_1[[z_2]], V_2[[z_2]])$ is identified with the space
$B(z_2=1)\in \Hom_\CC(V_1, V_2)$ such that
\begin{equation*}
  B(z_2=1)( V_1(k))\subset \bigoplus_{l\le k} V_2(j).
\end{equation*}
\end{NB}%
\begin{NB}
  By its definition, we have decompositions
  $V_1\{0\} = \cdots \oplus V_1(0)\oplus V_1(m)\oplus \cdots$,
  $V_1\{1\} = \cdots \oplus V_1(1)\oplus V_1(m+1)\oplus \cdots$, and
  so on. Gradings on the summand $V_1\{k\}$ are indexed by $k + m\ZZ$.
\end{NB}%

Let us consider the condition that $g_2(z_2)^{-1} B(z_2) g_1(z_2^m)$
is regular at $z_2 = 0$ with
$g_1(z_1) = \lambda'_1(z_1)^{-1}\varphi_1(z_1)$,
$g_2(z_2) = \lambda_2(z_2)^{-1}\varphi_2(z_2)$. It is equivalent to
\begin{equation}\label{eq:1}
  \varphi_2(z_2)^{-1} B(z_2=1) \lambda_1(z_2) \lambda'_1(z_2^m)^{-1}\varphi_1(z_2^m)
\end{equation}
is regular at $z_2=0$. Note that $\lambda_1(z_2)\lambda'_1(z_2^m)^{-1}$ is
equal to $z_2^k$ on the summand $V_1\{k\}$.
We introduce a new grading on $V_1$, $V_2$ given by $\varphi_1$,
$\varphi_2$. For $V_2$, we define $V_2^\varphi(k)$ as the $\tau^k$
eigenspace with respect to $\varphi_2(\tau)$ as above. For $V_1$, let
us recall that $\varphi_1$ preserves the decomposition
$V_1 = V_1\{1\}\oplus\cdots\oplus V_1\{m\}$. Then we define
$V_1^\varphi(l)$ as the $\tau^{(l-k)/m}$ eigenspace with respect to
$\varphi_1(\tau)$ in $V_1\{k\}$, where $1\le k \le m$ is determined
so that $l\equiv k\mod m$. If $\varphi_1 = \lambda'_1$ (and hence
$g_1(z_1) = \id$), it is nothing but $V_1 = \bigoplus V_1(k)$.
Then \eqref{eq:1} is regular at $z_2$ if and only if
\begin{equation}\label{eq:2}
    B(z_2=1) (V_1^\varphi(k)) \subset \bigoplus_{l\le k} V_2^\varphi(l).
\end{equation}

\begin{NB}
  Thus it is more natural to consider the one parameter subgroup
  $\tau\mapsto \bigoplus \tau^l \id_{V_2^\varphi(l)}$. By its
  definition it is given by a shift of $\lambda$ by $m \mu$, and then
  conjugate by $G'$.
\end{NB}%

Thus connected components of the fixed point sets
$\cT^{(\lambda(\tau),\tau)}$ (as well as their projection to
$\bN_\cK^{(\lambda(\tau),\tau)}$) are almost the same as varieties
appeared in Lusztig's construction of canonical bases from quivers
\cite[\S1.5]{Lu-can2}, where the quiver has vertices $1_1$, \dots,
$1_m$, $2$ and arrows $1_k\to 2$. See \cref{fig:fixed} for $m=3$. Note
that this is different from the right quiver in \cref{fig:G2}. The
only differences from Lusztig's varieties are (1) the degree $k$
subspace, i.e., $V_1^\varphi(k)\oplus V_2^\varphi(k)$, might not be
concentrated at a single vertex, and (2) the degree $l$ subspace on the
vertex $1_k$ is only allowed when $k\equiv l\mod m$. But these
differences are \emph{superficial}. If the flag types at vertices are
the same and the conditions \eqref{eq:2} are the same, the grading is
not relevant. We get isomorphic varieties.

\begin{figure}[htbp]
    \centering
\begin{tikzpicture}[scale=1.2,
circled/.style={circle,draw
,thick,inner sep=0pt,minimum size=6mm},
squared/.style={rectangle,draw
,thick,inner sep=0pt,minimum size=6mm},
triplearrow/.style={
  draw=black!75,
  color=black!75,
  thick,
  double distance=3pt, 
  decoration={markings,mark=at position .75 with {\arrow[scale=.7]{>}}},
  postaction={decorate},
  >=stealth}, 
thirdline/.style={draw=black!75, color=black!75, thick, -
}
]
\node[circled] (vv1) at ( 0,1)  {$1_1$};
\node[circled] (vv2) at ( 0,0)  {$1_2$};
\node[circled] (vv3) at ( 0,-1) {$1_3$};
\node[circled] (vv4) at ( 1,0)  {$2$}
edge [<-,thick] (vv1)
edge [<-,thick] (vv2)
edge [<-,thick] (vv3);
\end{tikzpicture}
\caption{The quiver appearing in the fixed point set}
    \label{fig:fixed}
\end{figure}
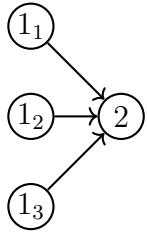

\subsection{}

The analysis of the fixed point set in the previous subsection can be
applied to general cases. The final claim that components of the fixed
point set $\cT^{(\lambda(\tau),\tau)}$ are isomorphic to Lusztig's
varieties remains true if the quiver (corresponding to
\cref{fig:fixed}) has no loop, in particular, for type $BCFG$.
Therefore
\begin{Theorem}\label{thm:ADE}
  Consider the quantized Coulomb branches $\cAh$ of type $BCFG$
  with $W=0$. Then we have a natural bijection between
  \begin{itemize}
  \item simple objects in the category $\mathcal C$ such that their
    eigenvalues are evaluations at cocharacters of $T$,
  \item canonical base elements of weight
    $-\sum \dim (V_i\{k\}) \alpha_{i_k}$ in the lower triangular part
    $\mathbf U_q^-$ of the quantized enveloping algebra of type $ADE$.
  \end{itemize}
\end{Theorem}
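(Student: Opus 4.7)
The plan is to combine the earlier bijection theorem (in \cref{sec:fixed}, simple modules in $\mathcal C$ with prescribed eigenvalues $\leftrightarrow$ simple perverse sheaves occurring in $\Pi_*\CC_{\cT^{(\lambda,1)}}$) with the identification of the fixed point varieties carried out in the preceding subsections. Since $W=0$, the projection $\Pi\colon\cT^{(\lambda,1)}\to\bN_{\cK}^{(\lambda,1)}$ breaks up into a disjoint union indexed by the data $(\varphi_1,\varphi_2,\ldots)$ of cocharacters encoding the $\lambda$-fixed points on each $\Gr_{\GL(V_i)}$, and the fiber description \eqref{eq:2} realizes each piece as a vector bundle over a product of partial flag varieties whose base-times-fiber sits on top of a linear space of representations of an auxiliary quiver.

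First I would spell out that auxiliary quiver in full generality for $BCFG$ (it is the evident generalization of \cref{fig:fixed} obtained by decomposing each $V_i$ according to $\lambda_i$ modulo $d_i$, and replacing each arrow $j\to i$ of the valued quiver by the honest arrows between the pieces $V_i\{k\}$, $V_j\{l\}$ dictated by the matching of degrees in $z_i^{f_{ij}}=z_j^{f_{ji}}=z$). Since the original valued quiver has no loops and is of type $BCFG$, the auxiliary quiver is an ordinary (unvalued) $ADE$ quiver, with vertex set $\{i_k\}$ indexed by pairs $(i,k)$, and the dimension vector at $i_k$ is exactly $\dim V_i\{k\}$, so the root-lattice weight matches $-\sum_{i,k}\dim(V_i\{k\})\,\alpha_{i_k}$ as claimed.

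Next I would observe that once the fixed-point variety has been identified with (a union of) Lusztig's varieties $E_V$ for this $ADE$ quiver (after taking the quotient by the relevant $G'$-action, so that the relevant perverse sheaves are the $G'$-equivariant ones), the map $\Pi$ restricted to the fixed locus becomes precisely Lusztig's map whose direct image defines the category $\cP_V$ of \cite{Lu-can2}. Hence the simple perverse sheaves appearing, up to shift, in $\Pi_*\CC_{\cT^{(\lambda,1)}}$ are exactly the simple objects of $\cP_V$, which by Lusztig's theorem are in natural bijection with the canonical basis elements of $\mathbf U_q^-$ of type $ADE$ of the prescribed weight. Combining this with the bijection from the earlier theorem produces the desired bijection.

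The main obstacle, in my view, is the bookkeeping for the grading and the verification that the auxiliary $ADE$ quiver produced by the fixed-point analysis is the one whose canonical basis one wants. In particular one must check that conditions \eqref{eq:2} together with the decomposition $V_i=\bigoplus_k V_i\{k\}$ really do match Lusztig's setup up to the superficial differences noted in the excerpt (namely, that the grading on a single vertex $V_1\{k\}$ is allowed only in certain residues modulo $d_i$, and that components at the same degree living at different vertices must be treated separately). Once this combinatorial identification is done carefully, the rest of the argument is formal: apply the bijection theorem of \cref{sec:fixed} and quote \cite{Lu-can2}.
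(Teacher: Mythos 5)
Your proposal is correct and follows essentially the same route as the paper: the paper's own (very brief) argument is exactly to combine the localization-theorem bijection of \cref{sec:fixed} with the observation that, since the valued quiver of type $BCFG$ splits each vertex $i$ into $d_i$ vertices with no loops created, the components of $\cT^{(\lambda(\tau),\tau)}$ and their projections to $\bN_\cK^{(\lambda(\tau),\tau)}$ are Lusztig's varieties for the unfolded $ADE$ quiver, so that Lusztig's parametrization of the canonical basis of $\mathbf U_q^-$ by simple perverse sheaves finishes the proof. Your extra care about the grading bookkeeping and the degree-matching of arrows is precisely the ``superficial differences'' point the paper makes after \eqref{eq:2}, so there is nothing genuinely different in your approach.
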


Here $i$ runs over the set of vertices of the original quiver, and
$k$ runs from $1$ to $d_i$.
Concretely the correspondence between types is $B_n\mapsto A_{2n-1}$,
$C_n\mapsto D_{n+1}$, $F_4\mapsto E_6$, $G_2\mapsto D_4$.
\begin{NB}
as a vertex $i$
in type $BCFG$ splits into $d_i$ vertices in type $ADE$.
\end{NB}%

\begin{Remark}
  Note that the canonical base elements in the above theorem are in
  bijection also to simple objects in the category $\mathcal C$ (with
  the same constraint) of the quantized Coulomb branch of type $ADE$
  by the same analysis of the fixed point set as above.
  Recall that the quantized Coulomb branch $\cAh$ is a quotient of the
  shifted Yangian of type $BCFG$ or $ADE$, the same type as
  quiver. Therefore we have a bijective correspondence between simple
  modules in quotients of shifted Yangian of type $BCFG$ and of $ADE$.
  This result reminds us the result of Kashiwara, Kim and Oh
  \cite{MR3898986}, where a similar bijection was found between simple
  finite dimensional modules of quantum affine algebras of types $B_n$
  and $A_{2n-1}$.
\end{Remark}


\section{A second definition}
\label{section: second definition}
In this section we present a second possible definition for a Coulomb branch associated to a quiver gauge theory with symmetrizers.  In the case when the Cartan matrix satisfies assumption (\ref{assumption on Cartan}), this second definition agrees with that given in \cref{sec:definition}.  But in general this is not the case.  We note that this second definition applies to theories which are not of quiver type.

\subsection{Covers of disks} For each $k \in \ZZ_{>0}$ consider the formal disc $D_k = \Spec \CC[[x^k]] $.  If $k | \ell$, there is a map
\begin{equation}
\rho_{k | \ell} : D_k  \longrightarrow D_\ell
\end{equation}
corresponding to the inclusions of rings $\CC[[x^\ell]] \hookrightarrow \CC[[x^k]]$. Similarly there are maps between the corresponding formal punctured discs, which we also denote $\rho_{k|\ell} : D_k^\ast \rightarrow D_\ell^\ast$ by abuse of notation.  These maps are equivariant for the $\CC^\times$--action by loop rotation, $\tau: x^k\mapsto \tau^k x^k$.
\begin{NB2}
June 25: I hope there will be no confusion with the discs from earlier in the paper.  At least for $D_i$ the index $i\in I$, not $i\in \ZZ$.
\end{NB2}

\subsection{General definition}

Fix a pair $(\altG, \altN)$, consisting of $\altG = \prod_{k=1}^d G_k$ a product of complex connected reductive groups, and $\altN = \bigoplus_{k=1}^d \bN_k$ a direct sum of complex finite-dimensional representations of $\altG$.  In addition, we assume that $G_k$ acts \emph{trivially} on $\bN_j$, unless $j | k$.

Given such a pair $(\altG,\altN)$, we define $\cR_{\altG,\altN}$ to be the moduli space of triples $(\cP_\bullet, \varphi_\bullet, s_\bullet)$, where $\cP_\bullet = (\cP_1,\ldots,\cP_d)$, $\varphi_\bullet = (\varphi_1,\ldots,\varphi_d)$, and $s_\bullet = (s_1,\ldots, s_d)$ satisfy
\begin{enumerate}
\item[(a)] $\cP_k$ is a principal $G_k$--bundle over $D_k$,

\item[(b)] $\varphi_k$ is a trivialization of $\cP_k$ over $D_k^\ast$,

\item[(c)] $s_k$ is a section of the associated bundle
$$
s_k \in \Gamma\Big( D_k, \Big( \prod_{k | \ell} \rho_{k| \ell}^\ast \cP_\ell\Big) \times^{\prod_{k|\ell} G_\ell} \bN_k \Big),
$$
such that it is sent to a regular section of the trivial bundle under the trivialization $\prod_{k| \ell} \rho_{k| \ell}^\ast \varphi_\ell$ over $D_k^\ast$.
\end{enumerate}

As usual we also define a larger moduli space $\cT_{\altG, \altN}$ by dropping the extension conditions in (c).

The group $\altGO = \prod_{k=1}^d G_k [[x^k]]$ acts on $\cR_{\altG,\altN}$ by changing $\varphi_\bullet$.  There is also an action of $\CC^\times$, acting by loop rotation of the discs $D_k$ as in the previous section.  We can define a convolution product on $H^{\altGO}_\ast(\cR_{\altG, \altN})$ just as in \cite{2016arXiv160103586B}.  By the argument in \ref{subsection: convolution product}, it is a commutative ring, and we define the Coulomb branch
$$
\cM_C(\altG,\altN) \stackrel{def}{=} \Spec H^{\altGO}_\ast(\cR_{\altG, \altN})
$$
It has a deformation quantization defined by $H^{\altGO\rtimes \CC^\times}(\cR_{\altG, \altN})$, and in particular a Poisson structure. 

The arguments from cite \cite{2016arXiv160103586B} apply with small modifications to $\cM_C(\altG,\altN)$.  In particular it is finite type, integral, normal, and generically symplectic.  One useful observation in modifying the proofs is the following:

\begin{Remark}
Suppose that $\altG = G_\ell$ consists of a single factor, and define its representation $\bN' = \bigoplus_{k | \ell} \bN_k^{\oplus(\ell / k)}$.  Then $\cM_C(\altG, \altN)$ is isomorphic to the usual Coulomb branch $\cM_C(G_\ell, \bN')$ as defined in \cite{2016arXiv160103586B}.  This comes from the fact that there is an isomorphism
$\bN_k [[x^k]] = \bigoplus_{0\leq a < \ell/k} x^{a k} \bN_k[[x^\ell]]  \cong \bN_k[[x^\ell]]^{\oplus(\ell / k)}$ as representations of $G_\ell[[x^\ell]]$.
\end{Remark}

\begin{NB2}
It is straightforward to write down the monopole formula for general $(\altG, \altN)$.  Is this worth including?
\end{NB2}

\begin{NB2}
July 4: Added a brief mention of twisted monopole formula in the quiver case, below.  
\end{NB2}

\subsection{The quiver case}
\label{subsection: second definition}

As in \cref{subsection: A valued quiver}, consider a valued quiver  associated to a symmetrizable Cartan matrix $(c_{ij})_{i,j\in I}$.   Also choose symmetrizers $(d_i)\in \ZZ^d_{>0}$.  Recall that we denote \(
   g_{ij} = \gcd(|c_{ij}|, |c_{ji}|),
\)
\(
   f_{ij} = |c_{ij}|/g_{ij}
\)
when $c_{ij} < 0$.  It is not hard to see that $d_i$ must be a multiple of $f_{ji}$ for any $c_{ij} <0$, so we may define integers $d_{ij}$ by the rule $d_i = d_{ji} f_{ji}$.  They satisfy $d_{ij} = d_{ji}$.

\begin{Remark}
In fact, $\operatorname{lcm}(d_i, d_j) = d_i f_{ij} = d_j f_{ji}$ and $\operatorname{gcd}(d_i, d_j) = d_{ij} = d_{ji}$.
\end{Remark}

Choose vector spaces $V_i$ and $W_i$ for each $i \in I$.  Given these choices, we define a pair $(\altG, \altN)$ according to the following rules:
\begin{align}
G_k & = \prod_{\substack{i \in I, \\ d_i = k}} \operatorname{GL}(V_i), \\
\bN_k & = \bigoplus_{\substack{i\in I, \\ d_i = k}} \Hom(W_i, V_i) \oplus \bigoplus_{\substack{j \rightarrow i, \\ d_{ij} = k }} \CC^{g_{ij}} \otimes_\CC \Hom(V_j, V_i)
\end{align}
Then $\altN$ is a representation of $\altG$ in the natural way, and satisfies our assumption from the beginning of the previous section. 
By tracing through the definition one can see that the moduli space $\cR_{\altG, \altN}$ parametrizes:
\begin{itemize}
\item a rank $\bv_i$ vector bundle $\cE_i$ over $D_{d_i}$ together with a
  trivialization $\varphi_i\colon \cE_i|_{D_{d_i}^*} \to
  V_i\otimes_\CC \shfO_{D_{d_i}^*}$ for $i\in I$,
\item a homomorphism $s_i\colon W_i\otimes_\CC\shfO_{D_{d_i}}\to \cE_i$
  such that $\varphi_i\circ (s_i|_{D_{d_i}^*})$ extends to $D_{d_i}$ for
  $i\in I$,
\item a homomorphism
  $s_{ij}\in \CC^{g_{ij}}\otimes_\CC  \Hom_{\shfO_{D_{d_{ij}}}}(\rho^\ast_{d_{ij}|d_j}\cE_j,\rho^\ast_{d_{ij}| d_i}\cE_i)$
  such that
  $(\rho^\ast_{d_{ij} | d_i}\varphi_i) \circ (s_{ij}|_{D_{d_{ij}}^*}) \circ
  (\rho^\ast_{d_{ij}|f_j}\varphi_j)^{-1}$ extends to $D_{d_{ij}}$, where $c_{ij} < 0$ and
  there is an arrow $j\to i$ in the quiver.
\end{itemize}


\subsection{Comparison}
\label{subsection: comparison}
We now compare with the construction from \cref{subsection: A moduli space}.  For this it suffices to understand the case of a single edge $j\rightarrow i$.  As explained in Section \ref{section: loop rotation}, we can $\CC^\times$--equivariantly identify $D_i \cong D_{d_i}$ via $z_i \mapsto x^{d_i}$, $D_j \cong D_{d_j}$ via $z_j \mapsto x^{d_j}$, and $D\cong D_{d_i f_{ij}} = D_{d_j f_{ji}}$ via $z\mapsto x^{d_i f_{ij}}$.   We also denote $D' = D_{d_{ij}} = D_{d_{ji}}$.  Then there are commutative diagrams of discs and their corresponding rings, as in \cite[\S 4.2]{arXiv181209663}\footnote{We thank an anonymous referee for pointing out this reference.}:
\begin{equation*}
\begin{tikzcd}[row sep = scriptsize]
& D & & & & \CC[[x^{d_i f_{ij}}]] \ar[dr, hookrightarrow] \ar[dl, hookrightarrow] &\\ 
D_i \ar[ur, "\pi_{ji}"] & & D_j \ar[ul, "\pi_{ij}"'] & & \CC[[x^{d_i}]] \ar[dr, hookrightarrow] & & \CC[[x^{d_j}]] \ar[dl, hookrightarrow] \\
& D_i \times_D D_j \ar[ul] \ar[ur] & & & & \CC[[x^{d_i}, x^{d_j}]] \ar[d,hookrightarrow] & \\
& D' \ar[u] & & & & \CC[[x^{d_{ij}}]]&
\end{tikzcd}
\end{equation*}
Both squares are Cartesian, while the inclusion $\CC[[x^{d_i}, x^{d_j}]] \hookrightarrow \CC[[x^{d_{ij}}]]$ is of finite codimension over $\CC$. We also note that $\CC[[x^{d_i f_{ij}}]] = \CC[[x^{d_i}]] \cap \CC[[x^{d_j}]]$.

For brevity, let us denote the covering maps $\rho_{ij} = \rho_{d_{ij} | d_j} : D' \rightarrow D_j$ and $\rho_{ji} = \rho_{d_{ji} | d_i}: D' \rightarrow D_i$.  
Then the difference between the two constructions from \cref{subsection: A moduli space} and \cref{subsection: second definition} is simply in the definition the section $s_{ij}$: whether it lies in
\begin{equation}
\label{eq: comparison}
\CC^{g_{ij}} \otimes_\CC \Hom_{\cO_D}( \pi_{ij \ast} \cE_j, \pi_{ji \ast} \cE_i) \quad \text{ or } \quad \CC^{g_{ij}} \otimes_\CC \Hom_{\cO_{D'}}( \rho_{ij}^\ast \cE_j, \rho_{ji}^\ast \cE_i)
\end{equation}
We now reformulate both sides in terms of the above power series rings, ignoring the tensor product with $\CC^{g_{ij}}$ in each case.  Denote by $E_i$ the $\CC[[x^{d_i}]]$--module corresponding to $\cE_i$, and by $E_j$ the $\CC[[x^{d_j}]]$--module corresponding to $\cE_j$. Then on the one hand, the left side of (\ref{eq: comparison}) corresponds to
\begin{equation*}
\Hom_{\CC[[x^{d_i f_{ij}}]]} (E_j, E_i) \ \cong \ \Hom_{\CC[[x^{d_i}]]}\big( \CC[[x^{d_i}, x^{d_j}]] \otimes_{\CC[[x^{d_j}]]} E_j, E_i \big)
\end{equation*}
On the other hand, the right side of (\ref{eq: comparison}) corresponds to
\begin{equation*}
\Hom_{\CC[[x^{d_{ij}}]]} \big( \CC[[x^{d_{ij}}]]\otimes_{\CC[[x^{d_j}]]} E_j, \CC[[x^{d_{ij}}]]\otimes_{\CC[[x^{d_i}]]} E_i \big)
\end{equation*}
\begin{equation*}
\cong \Hom_{\CC[[x^{d_i}]]} \big( \CC[[x^{d_{ij}}]]\otimes_{\CC[[x^{d_j}]]} E_j, E_i \big)
\end{equation*}
For this isomorphism we use the fact that induction and coinduction of modules between the rings $A =\CC[[x^{d_i}]] \hookrightarrow B =\CC[[x^{d_{ij}}]]$ are isomorphic as functors: there is an isomorphism of left $B$--modules $\Hom_A(B,A) \cong B$ (equivariant up to a grading shift, for the loop $\CC^\times$--action).

Thus we see that the difference between the two sides of (\ref{eq: comparison}), and thus between our two constructions, is captured by the finite codimension inclusion of rings
\begin{equation*}
\CC[[x^{d_i}, x^{d_j}]] \hookrightarrow \CC[[x^{d_{ij}}]]
\end{equation*}
Note that this map is an isomorphism if and only if $f_{ij} =1$ or $f_{ji} =1$.

\begin{Theorem}
For a general valued quiver, if the assumption (\ref{assumption on Cartan}) holds then our constructions from \cref{subsection: A moduli space} and \cref{subsection: second definition} are isomorphic.  In particular, this is the case in all finite types.
\end{Theorem}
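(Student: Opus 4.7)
The plan is to reduce the theorem to an edge-by-edge comparison, leveraging the analysis already carried out just before the theorem statement. First I would observe that both constructions only differ in how the sections $s_{ij}$ are defined: all other data (the bundles $\cE_i$, the trivializations $\varphi_i$, the sections $s_i$) are formed the same way in both constructions, once we apply the identification $D_i \cong D_{d_i}$ and the $\CC^\times$-equivariant comparison of coordinate rings already established. Thus the task is to show that under assumption~(\ref{assumption on Cartan}), for each edge $j \to i$ with $c_{ij} < 0$, the two candidate spaces
\[
\Hom_{\cO_D}(\pi_{ij*}\cE_j, \pi_{ji*}\cE_i)
\quad \text{and} \quad
\Hom_{\cO_{D'}}(\rho_{ij}^{*}\cE_j, \rho_{ji}^{*}\cE_i)
\]
agree as $\altGO$-equivariant (and $\CC^\times$-equivariant) modules.

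Next I would invoke the reformulation from \cref{subsection: comparison}, which identifies the two sides respectively with
\[
\Hom_{\CC[[x^{d_i}]]}\bigl(\CC[[x^{d_i},x^{d_j}]]\otimes_{\CC[[x^{d_j}]]} E_j,\; E_i\bigr)
\qquad\text{and}\qquad
\Hom_{\CC[[x^{d_i}]]}\bigl(\CC[[x^{d_{ij}}]]\otimes_{\CC[[x^{d_j}]]} E_j,\; E_i\bigr).
\]
Thus the comparison reduces to the question of whether the ring inclusion $\CC[[x^{d_i},x^{d_j}]]\hookrightarrow \CC[[x^{d_{ij}}]]$ is an isomorphism. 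Assumption~(\ref{assumption on Cartan}) is exactly what we need: if $f_{ij}=1$ then $d_j \mid d_i$, so $d_{ij}=d_j$ and $\CC[[x^{d_i},x^{d_j}]] = \CC[[x^{d_j}]] = \CC[[x^{d_{ij}}]]$; symmetrically if $f_{ji}=1$. So the inclusion is an isomorphism for every edge, and the two definitions of the extension datum $s_{ij}$ coincide. This identification is manifestly equivariant for $\altGO\rtimes \CC^\times$, so it induces a $\altGO\rtimes \CC^\times$-equivariant isomorphism between the moduli spaces $\cR$ in the two constructions. Passing to equivariant Borel-Moore homology and applying the convolution product of \cref{subsection: convolution product} then identifies the corresponding Coulomb branches $\cM_C$ as Poisson schemes, together with their deformation quantizations.

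Finally, for the last sentence of the theorem, I would note that any finite type Cartan matrix has the property that for each $i \neq j$ with $c_{ij} < 0$, the unordered pair $(|c_{ij}|, |c_{ji}|)$ lies in $\{(1,1),(1,2),(1,3)\}$; in each case $g_{ij} = 1$ and at least one of $f_{ij},f_{ji}$ equals $1$. Hence assumption~(\ref{assumption on Cartan}) holds automatically in finite types, and the theorem applies in all such cases.

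The main (essentially only) obstacle is the bookkeeping for the equivalence of ring inclusions under the assumption; all nontriviality has already been packaged into the comparison diagram of \cref{subsection: comparison}, so the argument itself is short, and the remainder is a routine check that the identification of sections $s_{ij}$ is compatible with the group action and the loop rotation grading.
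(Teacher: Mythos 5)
Your proposal is correct and follows essentially the same route as the paper: the theorem is exactly the conclusion of the comparison carried out in \cref{subsection: comparison}, where the difference between the two definitions of $s_{ij}$ is reduced, edge by edge, to the inclusion $\CC[[x^{d_i},x^{d_j}]]\hookrightarrow\CC[[x^{d_{ij}}]]$, which is an isomorphism precisely when $f_{ij}=1$ or $f_{ji}=1$, i.e.\ under assumption (\ref{assumption on Cartan}). Your added checks (equivariance, and that finite types satisfy the assumption since $(|c_{ij}|,|c_{ji}|)\in\{(1,1),(1,2),(1,3)\}$ up to order) match the paper's intent.
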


\subsection{Twisted monopole formula}
The previous section shows that the twisted monopole formula applies to $\cR_{\altG, \altN}$ in the case when assumption (\ref{assumption on Cartan}) holds.  But in fact it is not hard to see that the twisted monopole formula is valid for $\cR_{\altG, \altN}$ even when this assumption does not hold.  More precisely, Proposition \ref{prop: eq: poincare polynomial} is valid for $\cR_{\altG, \altN}$ in all types, with the same expression for $d_\lambda$ from \cref{subsection: twisted monopole formula}.

\begin{NB2}
July 12: Updated this section to include rank 2 calculations, which were previously in Section 3(ii)
\end{NB2}

The twisted monopole formula is related to the following generalization of the calculations from \cref{sec:example}: for an arbitrary rank 2  Cartan matrix we find that
\begin{equation*}
  \bz^*(w_1) = w_1, \quad \bz^*(w_2) = w_2, \quad
  \bz^*(\sfy_{a,b}) = (w_1-w_2)^{g_{12} \cdot \max(f_{12}b - f_{21} a,0)}\sfu_{a,b}.
\end{equation*}
Indeed, the fiber of $\cT_{\altG, \altN}$ over $(a,b) \in \Gr_G = \ZZ^2$ is
\begin{equation*}
\CC^{g_{12}} \otimes_\CC x^{a d_1 - b d_2} \Hom_{\CC}( V_2, V_1)[[x^{d_{12}}]],
\end{equation*}
while the fiber of $\cR_{\altG, \altN}$ is its intersection with $\CC^{g_{12}} \otimes_\CC \Hom_\CC(V_2, V_1) [[x^{d_{12}}]]$.  The contribution to $\bz^*(\sfy_{a,b})$ above is the Euler class of the quotient, recalling that $d_1 = d_{12} f_{21}$ and $d_2 = d_{12} f_{12}$.

Take $(a_0,b_0)\in\ZZ^2$ such that
$f_{12} b_0 - f_{21} a_0 = 1$. Then we have
$\sfy_{f_{12},f_{21}}\sfy_{-f_{12},-f_{21}} = 1$, 
$(w_1 - w_2)^{g_{12}} = \sfy_{a_0,b_0}\sfy_{-a_0,-b_0}$. Hence we have
\(
H^{G_{\bullet, \cO}}_*(\cR_{\altG,\altN}) \cong \CC[w_1,\sfy_{f_{12},f_{21}}^{\pm},
\sfy_{a_0,b_0},\sfy_{-a_0,-b_0}].
\)
Therefore the Coulomb branch is $\BA\times\BA^\times \times  \BA^2 /( \ZZ/g_{12} \ZZ)$.


\subsection*{Acknowledgments}

We are grateful to our anonymous referees for their very helpful suggestions.
H.N.~thanks B.~Leclerc for explanations of \cite{MR3660306} and
subsequent developments over years.
He also thanks
M.~Finkelberg,
R.~Fujita, and
D.~Muthiah
for useful discussion.
A part of this paper was written while H.N.~was visiting the Simons
Center for Geometry and Physics. He wishes to thank its warm
hospitality.
A.W.~thanks A.~Braverman, M.~Finkelberg and B.~Webster for helpful discussions. 

The research of H.N.~was supported in part by the World Premier
International Research Center Initiative (WPI Initiative), MEXT,
Japan, and by JSPS Grant Numbers 16H06335, 19K21828.
This research of A.W.~was supported in part by Perimeter Institute for Theoretical Physics. Research at Perimeter Institute is supported by the Government of Canada through Innovation, Science and Economic Development Canada and by the Province of Ontario through the Ministry of Research, Innovation and Science.


\bibliographystyle{myamsalpha}
\bibliography{nakajima,mybib,coulomb,symmtype}

\newcommand{\etalchar}[1]{$^{#1}$}
\providecommand{\noopsort}[1]{}\def\cprime{$'$} \def\cprime{$'$}
  \def\cprime{$'$} \def\cprime{$'$} \def\cprime{$'$}
  \providecommand{\noopsort}[1]{}\def\cftil#1{\ifmmode\setbox7\hbox{$\accent"5E#1$}\else
  \setbox7\hbox{\accent"5E#1}\penalty 10000\relax\fi\raise 1\ht7
  \hbox{\lower1.15ex\hbox to 1\wd7{\hss\accent"7E\hss}}\penalty 10000
  \hskip-1\wd7\penalty 10000\box7}
\providecommand{\bysame}{\leavevmode\hbox to3em{\hrulefill}\thinspace}
\providecommand{\MR}{\relax\ifhmode\unskip\space\fi MR }
\providecommand{\MRhref}[2]{%
  \href{http://www.ams.org/mathscinet-getitem?mr=#1}{#2}
}
\providecommand{\href}[2]{#2}
\begin{thebibliography}{KWWY14}

\bibitem[BDF16]{bdf}
A.~{Braverman}, G.~{Dobrovolska}, and M.~{Finkelberg}, \emph{{Gaiotto-Witten
  superpotential and Whittaker D-modules on monopoles}}, Adv. Math.
  \textbf{300} (2016), 451--472,
  \href{http://arxiv.org/abs/1406.6671}{{\ttfamily arXiv:1406.6671 [math.AG]}}.

\bibitem[BF14]{bf14}
A.~Braverman and M.~Finkelberg, \emph{Semi-infinite {S}chubert varieties and
  quantum {K}-theory of flag manifolds}, J. Amer. Math. Soc. \textbf{27}
  (2014), no.~4, 1147--1168.

\bibitem[BFN18a]{2016arXiv160103586B}
A.~{Braverman}, M.~{Finkelberg}, and H.~{Nakajima}, \emph{{Towards a
  mathematical definition of Coulomb branches of $3$-dimensional $\mathcal N=4$
  gauge theories, II}}, Adv. Theor. Math. Phys. \textbf{22}
  ({\noopsort{2016}}2018), no.~5, 1071--1147,
  \href{http://arxiv.org/abs/1601.03586}{{\ttfamily arXiv:1601.03586
  [math.RT]}}.

\bibitem[BFN18b]{2018arXiv180511826B}
\bysame, \emph{{Line bundles over {C}oulomb branches}}, ArXiv e-prints (2018),
  \href{http://arxiv.org/abs/1805.11826}{{\ttfamily arXiv:1805.11826
  [math.RT]}}.

\bibitem[BFN19]{2016arXiv160403625B}
\bysame, \emph{{Coulomb branches of $3d$ {$\mathcal N=4$} quiver gauge theories
  and slices in the affine {G}rassmannian}}, Adv. Theor. Math. Phys.
  \textbf{23} (2019), no.~1, 75--166, With two appendices by Braverman,
  Finkelberg, Joel Kamnitzer, Ryosuke Kodera, Nakajima, Ben Webster and Alex
  Weekes, \href{http://arxiv.org/abs/1604.03625}{{\ttfamily arXiv:1604.03625
  [math.RT]}}. \MR{4020310}

\bibitem[CFHM14]{Cremonesi:2014xha}
S.~Cremonesi, G.~Ferlito, A.~Hanany, and N.~Mekareeya, \emph{{Coulomb branch
  and the moduli space of instantons}}, JHEP \textbf{1412} (2014), 103,
  \href{http://arxiv.org/abs/1408.6835}{{\ttfamily arXiv:1408.6835 [hep-th]}}.

\bibitem[CHZ14]{Cremonesi:2013lqa}
S.~Cremonesi, A.~Hanany, and A.~Zaffaroni, \emph{{Monopole operators and
  Hilbert series of Coulomb branches of $3d$ $\mathcal{N} = 4$ gauge
  theories}}, JHEP \textbf{1401} (2014), 005,
  \href{http://arxiv.org/abs/1309.2657}{{\ttfamily arXiv:1309.2657 [hep-th]}}.

\bibitem[dCA18]{Henrique}
H.~de~Campos~Affonso, \emph{Bow varieties for the symplectic group}, master
  thesis, Kyoto University, 2018.

\bibitem[DR80]{DR80}
V.~Dlab and C.~Ringel, \emph{The preprojective algebra of a modulated graph},
  Representation Theory II, Lecture Notes in Math \textbf{832} (1980),
  216--231.

\bibitem[FH91]{MR1153249}
W.~Fulton and J.~Harris, \emph{Representation theory}, Graduate Texts in
  Mathematics, vol. 129, Springer-Verlag, New York, 1991, A first course,
  Readings in Mathematics. \MR{1153249}

\bibitem[FKMM99]{fkmm}
M.~Finkelberg, A.~Kuznetsov, N.~Markarian, and I.~Mirkovi{\'c}, \emph{A note on
  the symplectic structure on the space of $g$-monopoles}, Comm. Math. Phys.
  \textbf{201} (1999), no.~2, 411--421, see
  \url{http://arxiv.org/abs/math/9803124v6} or Comm. Math. Phys. {\bf 334}
  (2015), no. 2, 1153--1155, for erratum.

\bibitem[FKP{\etalchar{+}}18]{fkprw}
M.~Finkelberg, J.~Kamnitzer, K.~Pham, L.~Rybnikov, and A.~Weekes,
  \emph{Comultiplication for shifted {Y}angians and quantum open {T}oda
  lattice}, Adv. Math. \textbf{327} (2018), 349--389. \MR{3761996}

\bibitem[FM99]{mf}
M.~Finkelberg and I.~Mirkovi{\'c}, \emph{Semi-infinite flags. {I}. {C}ase of
  global curve {${\mathbb P}^1$}}, Amer. Math. Soc. Transl. Ser. 2 \textbf{194}
  (1999), 81--112.

\bibitem[FT19]{ft}
M.~{Finkelberg} and A.~{Tsymbaliuk}, \emph{Multiplicative slices, relativistic
  {Toda} and shifted quantum affine algebras}, Representations and nilpotent
  Orbits of {L}ie algebraic systems, Progr. Math., vol. 330,
  Birkh\"{a}user/Springer, Cham, 2019,
  \href{http://arxiv.org/abs/1708.01795}{{\ttfamily arXiv:1708.01795
  [math.RT]}}, pp.~133--304.

\bibitem[GKLO05]{GKLO}
A.~Gerasimov, S.~Kharchev, D.~Lebedev, and S.~Oblezin, \emph{On a class of
  representations of the {Y}angian and moduli space of monopoles}, Comm. Math.
  Phys. \textbf{260} (2005), no.~3, 511--525. \MR{2182434}

\bibitem[GLS16]{MR3555157}
C.~Gei\ss, B.~Leclerc, and J.~Schr\"{o}er, \emph{Quivers with relations for
  symmetrizable {C}artan matrices {III}: {C}onvolution algebras}, Represent.
  Theory \textbf{20} (2016), 375--413. \MR{3555157}

\bibitem[GLS17]{MR3660306}
C.~Geiss, B.~Leclerc, and J.~Schr\"{o}er, \emph{Quivers with relations for
  symmetrizable {C}artan matrices {I}: {F}oundations}, Invent. Math.
  \textbf{209} (2017), no.~1, 61--158. \MR{3660306}

\bibitem[GLS18a]{MR3801499}
C.~Gei\ss, B.~Leclerc, and J.~Schr\"{o}er, \emph{Quivers with relations for
  symmetrizable {C}artan matrices {II}: change of symmetrizers}, Int. Math.
  Res. Not. IMRN (2018), no.~9, 2866--2898. \MR{3801499}

\bibitem[GLS18b]{MR3848021}
C.~Geiss, B.~Leclerc, and J.~Schr\"{o}er, \emph{Quivers with relations for
  symmetrizable {C}artan matrices {IV}: crystal graphs and semicanonical
  functions}, Selecta Math. (N.S.) \textbf{24} (2018), no.~4, 3283--3348.
  \MR{3848021}

\bibitem[GLS18c]{MR3830892}
C.~Gei\ss, B.~Leclerc, and J.~Schr\"{o}er, \emph{Quivers with relations for
  symmetrizable {C}artan matrices {V}: {C}aldero-{C}hapoton formulas}, Proc.
  Lond. Math. Soc. (3) \textbf{117} (2018), no.~1, 125--148. \MR{3830892}

\bibitem[GLS18d]{arXiv181209663}
\bysame, \emph{Rigid modules and {Schur} roots}, arXiv e-prints (2018),
  arXiv:1812.09663, To appear in Math. Z.,
  \href{http://arxiv.org/abs/1812.09663}{{\ttfamily arXiv:1812.09663
  [math.RT]}}.

\bibitem[HL16]{MR3500832}
D.~Hernandez and B.~Leclerc, \emph{A cluster algebra approach to
  {$q$}-characters of {K}irillov-{R}eshetikhin modules}, J. Eur. Math. Soc.
  (JEMS) \textbf{18} (2016), no.~5, 1113--1159. \MR{3500832}

\bibitem[Kac90]{Kac}
V.~G. Kac, \emph{Infinite-dimensional {L}ie algebras}, third ed., Cambridge
  University Press, Cambridge, 1990. \MR{MR1104219 (92k:17038)}

\bibitem[KKO19]{MR3898986}
M.~Kashiwara, M.~Kim, and S.-j. Oh, \emph{Monoidal categories of modules over
  quantum affine algebras of type {A} and {B}}, Proc. Lond. Math. Soc. (3)
  \textbf{118} (2019), no.~1, 43--77,
  \href{http://arxiv.org/abs/1710.06627}{{\ttfamily arXiv:1710.06627
  [math.RT]}}. \MR{3898986}

\bibitem[KP18]{Kimura:2017hez}
T.~Kimura and V.~Pestun, \emph{{Fractional quiver W-algebras}}, Lett. Math.
  Phys. \textbf{108} (2018), no.~11, 2425--2451,
  \href{http://arxiv.org/abs/1705.04410}{{\ttfamily arXiv:1705.04410
  [hep-th]}}.

\bibitem[Kry18]{2017arXiv170900391K}
V.~V. Krylov, \emph{Integrable crystals and a restriction on a {L}evi subgroup
  via generalized slices in the affine {G}rassmannian}, Funktsional. Anal. i
  Prilozhen. \textbf{52} (2018), no.~2, 40--65,
  \href{http://arxiv.org/abs/1709.00391}{{\ttfamily arXiv:1709.00391
  [math.RT]}}. \MR{3799411}

\bibitem[KWWY14]{kwy}
J.~Kamnitzer, B.~Webster, A.~Weekes, and O.~Yacobi, \emph{Yangians and
  quantizations of slices in the affine {G}rassmannian}, Algebra and Number
  Theory \textbf{8} (2014), no.~4, 857--893.

\bibitem[Lus91]{Lu-can2}
G.~Lusztig, \emph{Quivers, perverse sheaves, and quantized enveloping
  algebras}, J. Amer. Math. Soc. \textbf{4} (1991), no.~2, 365--421.
  \MR{1088333 (91m:17018)}

\bibitem[Nak16]{2015arXiv150303676N}
H.~Nakajima, \emph{{Towards a mathematical definition of Coulomb branches of
  $3$-dimensional $\mathcal N=4$ gauge theories, I}}, Adv. Theor. Math. Phys.
  \textbf{20} ({\noopsort{2015}}2016), no.~3, 595--669,
  \href{http://arxiv.org/abs/1503.03676}{{\ttfamily arXiv:1503.03676
  [math-ph]}}.

\bibitem[Nak18]{2018arXiv181004293N}
\bysame, \emph{{Towards geometric {S}atake correspondence for {K}ac-{M}oody
  algebras -- {C}herkis bow varieties and affine {L}ie algebras of type $A$}},
  arXiv e-prints (2018), arXiv:1810.04293,
  \href{http://arxiv.org/abs/1810.04293}{{\ttfamily arXiv:1810.04293
  [math.RT]}}.

\bibitem[Nak19]{modules}
\bysame, \emph{{Modules of quantized {C}oulomb branches}}, 2019, in
  preparation.

\bibitem[NT16]{NT2016}
V.~Nandakumar and P.~Tingley, \emph{Quiver varieties and crystals in
  symmetrizable type via modulated graphs}, Math. Res. Lett. \textbf{25}
  (2016), no.~1, 159--180.

\bibitem[Oko17]{MR3752463}
A.~Okounkov, \emph{Lectures on {K}-theoretic computations in enumerative
  geometry}, Geometry of moduli spaces and representation theory, IAS/Park City
  Math. Ser., vol.~24, Amer. Math. Soc., Providence, RI, 2017, pp.~251--380.
  \MR{3752463}

\bibitem[VV10]{MR3013034}
M.~Varagnolo and E.~Vasserot, \emph{Double affine {H}ecke algebras and affine
  flag manifolds, {I}}, Affine flag manifolds and principal bundles, Trends
  Math., Birkh\"auser/Springer Basel AG, Basel, 2010, pp.~233--289.
  \MR{3013034}

\bibitem[{Web}16]{2016arXiv161106541W}
B.~{Webster}, \emph{{Koszul duality between Higgs and Coulomb categories
  $\mathcal O$}}, arXiv e-prints (2016), arXiv:1611.06541,
  \href{http://arxiv.org/abs/1611.06541}{{\ttfamily arXiv:1611.06541
  [math.RT]}}.

\bibitem[{Web}19]{2019arXiv190405415W}
\bysame, \emph{{Gelfand-Tsetlin modules in the Coulomb context}}, arXiv
  e-prints (2019), arXiv:1904.05415,
  \href{http://arxiv.org/abs/1904.05415}{{\ttfamily arXiv:1904.05415
  [math.RT]}}.

\bibitem[{Wee}19]{Weekes}
A.~{Weekes}, \emph{{Generators for Coulomb branches of quiver gauge theories}},
  arXiv e-prints (2019), \href{http://arxiv.org/abs/1903.07734}{{\ttfamily
  arXiv:1903.07734 [math.RT]}}.

\end{thebibliography}
\end{document}